\numberwithin{equation}{section}
\theoremstyle{plain}
\newtheorem{theorem}[equation]{Theorem}
\newtheorem{thm}[equation]{Theorem}
\newtheorem{lem}[equation]{Lemma}
\newtheorem{prop}[equation]{Proposition}
\theoremstyle{definition}
\newtheorem{defn}[equation]{Definition}
\theoremstyle{remark}
\newtheorem{rem}[equation]{Remark}
\def\N{\mathbb {N}}
\def\R{\mathbb{R}}
\newcommand{\rn}{\mathbb{R}^n}
\renewcommand{\emptyset}{\mbox{\textup{\O}}}
\newcommand{\normmm}[1]{{\left\vert\kern-0.25ex\left\vert\kern-0.25ex\left\vert #1
		\right\vert\kern-0.25ex\right\vert\kern-0.25ex\right\vert}}
\def\Z{\mathbb{Z}}
\def\S{\mathbb{S}}
\begin{document}

\title[Affine  Moser--Trudinger inequalities]{New approach to
affine Moser--Trudinger inequalities \\via Besov polar projection bodies}

\author[Oscar Dom\'{i}nguez et al.]{Oscar Dom\'{i}nguez, Yinqin Li, Sergey Tikhonov,
Dachun Yang and Wen Yuan}

\address{(O. Dom\'{i}nguez) Departamento de M\'etodos Cuantitativos\\CUNEF Universidad\\28040 Madrid\\ Spain}
\email{oscar.dominguez@cunef.edu}

\address{(Y. Li, D. Yang, W. Yuan) Laboratory of Mathematics and Complex
Systems (Ministry of Education of China),
School of Mathematical Sciences, Beijing Normal
University, Beijing 100875, The People's Republic of China}
\email{yinqli@mail.bnu.edu.cn, dcyang@bnu.edu.cn, wenyuan@bnu.edu.cn}

\address{(S. Tikhonov) Centre de Recerca Matem\`{a}tica, Campus de Bellaterra, Edifici C 08193 Bellaterra (Barcelona),
Spain; ICREA, Pg. Llu\'{i}s Companys 23, 08010 Barcelona, Spain, and Universitat Aut\`{o}noma de Barcelona,
Facultat de Ci\`{e}ncies, 08193 Bellaterra, Barcelona, Spain}
\email{stikhonov@crm.cat}
%
%

\thanks{\emph{Acknowledgements.} Oscar Dom\'{i}nguez is supported by the AEI grant RYC2022-037402-I, Yinqin Li, Dachun Yang, and
Wen Yuan are supported by the National Key Research and Development Program of China
(Grant No.\ 2020YFA0712900) and the National
Natural Science Foundation of China
(Grant Nos.\ 12371093, 12071197 and 12122102), and Sergey Tikhonov is supported
by PID2020-114948GB-I00, 2021 SGR 00087, AP 14870758,
the CERCA Programme of the Generalitat de Catalunya, and Severo Ochoa and Mar\'{i}a de Maeztu
Program for Centers and Units of Excellence in R\&D (CEX2020-001084-M)}

\keywords{Affine inequalities, Moser--Trudinger inequality, Morrey inequality, Bourgain--Brezis--Mironescu formulas, Besov polar projection body.}
\subjclass[2020]{Primary: 46E35, 46E30. Secondary: 52A40}


\begin{abstract}

We extend the affine inequalities on $\R^n$ for Sobolev functions in $W^{s, p}$ with $1 \leq p < \frac{n}{s}$ obtained recently  by Haddad--Ludwig \cite{HaddadLudwig, HaddadLudwig2} to the remaining range $p \geq \frac{n}{s}$. For each value of $s$, our results  are stronger than  affine Moser--Trudinger and Morrey inequalities. As a byproduct, we establish the analog of the classical $L^p$ Bourgain--Brezis--Mironescu inequalities  related to the Moser--Trudinger case $p=n$.  Our main tool is the affine invariant provided by Besov  polar projection bodies.
	
\end{abstract}

\maketitle

\setcounter{tocdepth}{1}


\section{Introduction}

\subsection{Affine and fractional Sobolev inequalities: An overview}

The classical Sobolev inequality on $\R^n$ states that, for
 every $f \in W^{1}_p$ ($1 \leq p < n$)\footnote{Otherwise is stated, all function spaces considered in this paper are based on $\R^n$.},

\begin{equation}\label{Intro1}\tag{S}
	\|f\|_{L^{\frac{n p}{n-p}}}^p \leq C_{n, p} \, \int_{\R^n} |\nabla f (x)|^p \, dx.
\end{equation}
 This inequality has a long and rich history. We only mention that the  optimal values of the  constant $C_{n, p}$ and the  corresponding extremizers  are known, see  \cite{Aubin, Talenti} and also \cite{Rodemich}.
 
 Let us  focus on two different improvements of inequality \eqref{Intro1} that have
 evolved into independent research fields:
 the affine refinement of \eqref{Intro1} by Lutwak--Yang--Zhang
and the  Bourgain--Brezis--Mironescu approach via fractional Sobolev spaces.

Lutwak, Yang, and Zhang (cf. \cite{Zhang}  and \cite{LutwakYangZhang} for $p=1$ and $1 < p < n$, respectively) showed that \eqref{Intro1} can be significantly improved as follows:\footnote{Let $B^n$ be the $n$-dimensional Euclidean unit ball and $\omega_n = |B^n|$.}
\begin{equation}\label{Intro2}\tag{LYZ}
	\|f\|_{L^{\frac{n p}{n-p}}}^p \leq C_{n, p} \frac{n \omega_n^{\frac{n + p}{n}}}{\alpha_{n, p}} \, |\Pi^*_p f|^{-\frac{p}{n}} \leq C_{n, p}  \int_{\R^n} |\nabla f (x)|^p \, dx,
\end{equation}
where $|\Pi^*_p f|$ is the $n$-dimensional Lebesgue measure of $\Pi^*_p f$,  the $L^p$ polar projection body\footnote{A certain star body in $\R^n$ associated to $f$, cf. Section \ref{SectionPrelimPolar} for the precise definition.} of $f$ and\footnote{$\sigma$ is the $(n-1)$-dimensional Hausdorff measure on $\S^{n-1}$.} \begin{equation}\label{DefAlpha}\alpha_{n, p} = \int_{\S^{n-1}} |e \cdot \xi|^p \, d \sigma(\xi), \qquad e \in \S^{n-1}.
\end{equation} The first inequality in \eqref{Intro2} is nowadays referred to as the \emph{affine} Sobolev inequality since involved quantities (i.e., Lebesgue norms and volumes of $L^p$ polar projection bodies) that remain invariant under volume-preserving affine transformations. This is in sharp contrast with the classical inequality \eqref{Intro1}, where standard Sobolev norms are only  invariant under rigid motions.

The inequalities \eqref{Intro2} (more precisely, the Zhang's affine Sobolev inequality with $p=1$) are of special relevance in Brunn--Minkowski theory. Indeed, if $E \subset \R^n$ is a convex set, then $\Pi^*_1 \mathbf{1}_E$ coincides with the classical notion of the polar projection body  $\Pi^* E$. Then the famous Petty projection inequality follows from \eqref{Intro2} (more precisely, its extension to functions of total variation obtained in \cite{Wang}) with $f= \mathbf{1}_E$,
\begin{equation}\label{Petty}
	\bigg(\frac{|E|}{|B^n|} \bigg)^{\frac{n-1}{n}} \leq \bigg(\frac{|\Pi^* E|}{|\Pi^* B^n|} \bigg)^{-\frac{1}{n}} \leq \frac{P(E)}{P(B^n)},
\end{equation}
where $P(E)$ stands for the perimeter of $E$. In particular, the Petty projection inequality is stronger than the classical isoperimetric inequality. Hence \eqref{Intro2} may be considered as a functional analog of \eqref{Petty}.

Taking a different approach,
 Bourgain, Brezis, and Mironescu  showed that the Sobolev inequality \eqref{Intro1} can also be sharpened  through suitable normalizations of fractional Sobolev norms.
  Let us first recall  that 
     the family of Gagliardo (semi-)norms
 \begin{equation}\label{ClassicGag}
	\|f\|_{W^{s, p}} = \bigg( \int_{\R^n} \int_{\R^n} \frac{|f(x)-f(y)|^p}{|x-y|^{s p + n}} \, dx \, dy \bigg)^{\frac{1}{p}}, \qquad s \in (0, 1),
\end{equation} 
does not converge at the expected  norm as  $s\to 1$, namely,
$$
\lim_{s \to 1-} \|f\|_{W^{s, p}} \neq \bigg(\int_{\R^n} |\nabla f (x)|^p \, dx \bigg)^{\frac{1}{p}}.
$$
In fact, the failure is even more dramatic since letting $s=1$ (at least formally)  in \eqref{ClassicGag} gives (cf. \cite{B02})
$$
	\|f\|_{W^{1, p}} < \infty \implies f \quad \text{is a constant}.
$$
Bourgain, Brezis, and Mironescu \cite{BBM} overcame this obstruction through  a  renormalization of \eqref{ClassicGag}. More precisely, if $f \in W^1_p$ then
\begin{equation}\label{Intro3}
	\lim_{s \to 1-} p (1-s) \|f\|_{W^{s, p}}^p = \alpha_{n, p} \int_{\R^n} |\nabla f (x)|^p \, dx,
\end{equation}
where $\alpha_{n, p}$ is given by \eqref{DefAlpha}. Furthermore, the prefactor $1-s$ in \eqref{Intro3}  corresponds to the optimal behaviour as $s \to 1-$ of the constant related to the classical Sobolev embedding theorem $W^{s, p} \subset L^{\frac{n p}{n-s p}}, \, p \in [1, n/s)$. Specifically, according\footnote{To be more precise, the authors obtained \eqref{Intro4} for the equivalent case of mean zero functions over a fixed cube.}  to \cite{BBM02},
\begin{equation}\label{Intro4}\tag{BBM}
	\|f\|_{L^{\frac{n p}{n-s p}}}^p \leq \sigma_{s, n, p} \, \|f\|_{W^{s, p}}^p \qquad \text{with} \qquad \sigma_{s, n, p} = c \,  \frac{1-s}{(n-s p)^{p-1}},
\end{equation}
where $c$ is a constant\footnote{In general, the optimal value of $c$ remains unknown.} independent of $s \in [1/2, 1)$.
An extension of  \eqref{Intro4}, taking now into account the behaviour of the constant as $s \to 0+$, was later obtained in \cite{Mazya}; cf. also \cite{FrankSeiringer, KMX, KL}. Putting together \eqref{Intro3} and \eqref{Intro4}, one arrives at the classical inequality \eqref{Intro1}  (possibly with a non optimal constant). In this sense, \eqref{Intro4} can be viewed as a refinement of \eqref{Intro1} using fractional smoothness.

The connection between  the two approaches  described above 
  has been recently discovered by Haddad and Ludwig, see \cite{HaddadLudwig2} and \cite{HaddadLudwig} for the case $p=1$ and $1 < p <n$, respectively.
  There the authors introduced the concept of the fractional polar projection body $\Pi^{*, s}_p f$
 related to $f \in W^{s, p}$ (cf. Section \ref{SectionPrelimPolar}) as the fractional counterpart of the classical $\Pi^*_p f$ introduced by Lutwak--Yang--Zhang.
  The following affine version  of  formula \eqref{Intro3} was shown to be true (see \cite[Theorem 3]{HaddadLudwig2} and \cite[Theorem 10]{HaddadLudwig}): For every $f \in W^1_p, p \in [1, \infty)$,
\begin{equation}\label{HLLimit}
	\lim_{s \to 1-} p (1-s) |\Pi^{*, s}_p f|^{-\frac{p s}{n}} = |\Pi^*_p f|^{-\frac{p}{n}}.
\end{equation}
Moreover, for $0 < s < 1$  and $1 \leq p < n/s$,
\begin{equation}\label{Intro5}\tag{HL}
	\|f\|_{L^{\frac{n p}{n-s p}}}^p \leq \sigma_{s, n, p} \, n \omega_n^{\frac{n + s p}{n}} |\Pi^{*, s}_p f|^{-\frac{p s}{n}},
\end{equation}
where $\sigma_{s, n, p}$ is the constant appearing in  \eqref{Intro4}, cf. Theorem 1 in \cite{HaddadLudwig2} and \cite{HaddadLudwig}. The inequality \eqref{Intro5} is affine (in the sense that both sides are invariant under volume-preserving linear transformations) and improves simultaneously both  \eqref{Intro2} and \eqref{Intro4}. Indeed, in light of \eqref{HLLimit},  \eqref{Intro2} (up to optimal constants) follows automatically from \eqref{Intro5} by taking limits as $s \to 1-$. On the other hand, a simple application of the dual mixed volume inequality (or more generally, H\"older's inequality) gives
\begin{equation}\label{RelIntro}
	n \omega_n^{\frac{n + s p}{n}} |\Pi^{*, s}_p f|^{-\frac{p s}{n}} \leq \|f\|_{W^{s, p}}^p,
\end{equation}
so \eqref{Intro5} is stronger than \eqref{Intro4}.

For the reader's convenience,  in Table \ref{Table1} we provide a quick guide to the interrelation between the inequalities \eqref{Intro1}, \eqref{Intro2}, \eqref{Intro4}, and \eqref{Intro5}.

\begin{table}[H]
  \centering
\begin{tabular}{c|c|c}
  \eqref{Intro1} & $1\leftarrow s$&  \eqref{Intro4} \\
  $\displaystyle{C_{n, p}}\int_{\mathbb{R}^n}|\nabla f(x)|^p
  \,dx\ge\|f\|^p_{L^\frac{n}{n-p}} $ & $\Leftarrow$ &
  $\displaystyle{\sigma_{s, n, p} \|f\|^p_{W^{s, p}} \ge\|f\|_{L^\frac{n p}{n-s p}}}$ \\\hline
  $\Uparrow$ & \ & $\Uparrow$ \\\hline
  \eqref{Intro2} & $1\leftarrow s$ & \eqref{Intro5}\\
  $\displaystyle{C_{n, p} \frac{n \omega_n^{\frac{n + p}{n}}}{\alpha_{n, p}} \left|\Pi^*_p f\right|^{-\frac{p}{n}}
  \ge \|f\|^p_{L^{\frac{n p}{n-p}}}}$ & $\Leftarrow$ &
  $\displaystyle{\sigma_{s, n, p} n \omega_n^{\frac{n+s p}{n}} \left|\Pi^{*,s}_pf\right|^{-\frac{ p s}{n}}
  \ge \|f\|^p_{L^{\frac{n p}{n-s p}}}}$
\end{tabular}
\vspace{2mm}

  \caption{Affine and fractional Sobolev inequalities.}
  \label{Table1}
\end{table}

The limiting case of \eqref{Intro1} when\footnote{Note that $W^1_1 \subset L^\infty$ if  $n=1$. So, throughout this paper, we assume $n \geq 2$. } $p=n > 1$ corresponds to the celebrated Moser--Trudinger inequality \cite{Moser, Trudinger}. Namely, there exist constants $C$ and $A$ such that\footnote{As usual, $p'$ denotes the dual exponent of $p \in [1, \infty]$ given by $p' = \frac{p}{p-1}$.}
\begin{equation}\label{Intro6}\tag{MT}
	\frac{1}{|\text{supp } f|} \int_{\R^n} \left[ \exp \bigg(C \, \frac{|f(x)|}{\|\nabla f\|_{L^n}} \bigg)^{n'} - 1\right] \, dx \leq A
\end{equation}
for every $f \in W^1_n$ with $0 < |\text{supp } f| < \infty$.  In other words, functions in $W^1_n$ might be unbounded but they are exponentially integrable of order $n'$. The value  $C= n   \omega_n^{1/n}$ is the best possible one for which \eqref{Intro6} holds for some $A$. The affine version of \eqref{Intro6} is due to Cianchi, Lutwak, Yang, and Zhang \cite[Theorem 1.1]{CLYZ}, who showed the existence of an explicit constant $c_n$, depending only on $n$,  such that
\begin{equation}\label{Intro7}\tag{CLYZ}
\frac{1}{|\text{supp } f|} \int_{\R^n} \left[ \exp \bigg(C \, \frac{|f(x)|}{\|\nabla f\|_{L^n}} \bigg)^{n'} - 1\right] \, dx	\leq \frac{1}{|\text{supp } f|} \int_{\R^n} \left[ \exp \bigg(C \, \frac{|f(x)|}{c_n |\Pi^*_n f|^{-1/n}} \bigg)^{n'} - 1\right] \, dx  \leq A.
\end{equation}

The super-limiting case of \eqref{Intro1} corresponds  to classical Morrey inequalities.  Recall that the Morrey inequality provides the quantitative version of the embedding $W^1_p \subset L^\infty$ with $p > n$:
\begin{equation}\label{Intro8}\tag{M}
	\|f\|_{L^\infty} \leq c |\text{supp } f|^{\frac{1}{n}-\frac{1}{p}} \|\nabla f\|_{L^p},
\end{equation}
for every $f \in W^1_p$ with $|\text{supp }f| < \infty$ (see e.g. \cite{Talenti}). Here, $c = n^{-1/p} \omega_n^{-1/n} \big(\frac{p-1}{p-n} \big)^{1/p'}$. The affine counterpart of \eqref{Intro8}  was also obtained by Cianchi--Lutwak--Yang--Zhang \cite[Theorem 1.2]{CLYZ}: There exists a constant $c_{n, p}$, depending only on $n$ and $p$, such that
\begin{equation}\label{Intro9}\tag{CLYZM}
	\|f\|_{L^\infty} \leq c |\text{supp }f|^{\frac{1}{n}-\frac{1}{p}} c_{n, p} |\Pi^*_p f|^{-1/n} \leq  c |\text{supp }f|^{\frac{1}{n}-\frac{1}{p}} \|\nabla f\|_{L^p}.
\end{equation}

\subsection{Main results}
The goal of this paper is to complement the affine fractional Sobolev inequalities with $1 \leq p < n/s$ given in \cite{HaddadLudwig, HaddadLudwig2} for both the limiting case $p=n/s$ and the super-limiting case $p > n/s$.
Before we state our main results, let us mention
several essential difficulties
that arise when trying to apply the methodology developed in \cite{HaddadLudwig, HaddadLudwig2} to the missing case $p\geq n/s$.

\subsubsection{\textbf{The case $p \geq n/s$.}}\label{SectionObs}
  In short, Haddad and Ludwig
  ingeniously reduced the proof of \eqref{Intro5} to the classical setting given by \eqref{Intro4} with $p < n/s$ by applying the symmetrization technique and  P\'olya--Szeg\H{o}-type inequalities for
  fractional polar projection bodies.
 However, the following important  question remains open:
how to obtain  BBM type phenomenon\footnote{Connections between Moser--Trudinger inequalities and certain (non-local, non-convex) variants of Gagliardo norms have been recently obtained by Mallick and Nguyen \cite{Nguyen}.}
 for 
 \eqref{Intro6} (i.e., the analog of the \eqref{Intro4} inequality with $p=n/s$ and $s \to 1-$)?
 A crucial observation is that,  in some sense, the scale of fractional Sobolev spaces $W^{s, p}$ is not rich enough to capture  the underlying essence of  \eqref{Intro6}.

  One of our main ideas is to show that situation seriously improves when
   the Sobolev spaces $W^{s, p}$ are replaced by
the more general scale of Besov spaces $B^s_{p, q}$, cf. Section \ref{SectionBesov}. Here we recall  that $B^s_{p, p} = W^{s, p}$.

It turns out that  the integrability parameter $q$ in Besov spaces allows us to 
 extend the Bourgain--Brezis--Mironescu method to sharpen Moser--Trudinger inequalities.
 At a more technical level, it can be easily observed  that functions in $W^{n/p, p}, p > n,$ do not necessarily satisfy the exponential integrability of the same order as in \eqref{Intro6} (i.e.,  $n'$), but only the weaker exponential integrability of order $p'$.
  However, this issue may be overcome if we consider $B^{n/p}_{p, n}$  in place of $W^{n/p, p}$, cf. \eqref{PMT} below.\footnote{Note that $B^{n/p}_{p, n} \subset B^{n/p}_{p, p}= W^{n/p, p}$ since $n < p$.}

\subsubsection{
\textbf{A new approach via Besov polar projection bodies.}} In Section \ref{Section3}, we use Besov polar projection bodies as a tool that enables us to overcome the obstructions explained in Subsection \ref{SectionObs}. More precisely, we introduce Besov polar projection bodies $\Pi^{*, s}_{p, q} f$ related to the Besov norm  $\|f\|_{B^s_{p, q}}$ 
 as a natural extension of $\Pi^{*, s}_p f$ for $\|f\|_{W^{s, p}}$; cf. Definition \ref{DefBPPB}.  Then we show that $\Pi^{*, s}_{p, q} f$ is an affine invariant concept that is intimately connected with anisotropic Besov spaces $B^s_{p, q; K}$ defined by (cf. Definition \ref{DefABs})
$$
		\|f\|_{B^s_{p, q; K}} = \bigg(\int_0^\infty t^{-s q} \omega_K(f, t)_p^q \, \frac{dt}{t} \bigg)^{\frac{1}{q}},
$$
where $K$ is a star body in $\R^n$ with corresponding  $L^p$-moduli of smoothness (cf. Definition \ref{DefModSec})
$$
	\omega_K (f, t)_p =\bigg(\frac{1}{t^n |K|} \int_{\|h\|_K < t} \|f(\cdot + h) -f\|^p_{L^p} \, d h \bigg)^{\frac{1}{p}}.
$$
In the special case when $K = B^n$ we recover the classical Besov space $B^s_{p, q}$ and the modulus of smoothness $\omega(f, t)_p$, respectively. We will see  that $\omega_K(f, t)_p$ is a natural object to be investigated from the point of view of convex geometry. In particular, the anisotropic Sobolev spaces \cite{Ludwigb, Ludwig} based on the polar moment body of $K$
 can be characterized in terms of  $\omega_K(f, t)_p$  (cf. Proposition \ref{Prop3.13}) and  the  P\'olya--Szeg\H{o} inequality holds for  $\omega_K(f, t)_p$  (cf. Proposition \ref{ThmPS}.)

\subsubsection{\textbf{Affine fractional Moser--Trudinger inequalities.}} In Section \ref{Section5}, we introduce the following affine invariant
\begin{equation}\label{NA}
	\mathcal{G}_r (f) := \bigg(1-\frac{n}{r} \bigg)^{\frac{1}{n}}  \big|\Pi^{*, \frac{n}{r}}_{r, n} f \big|^{-\frac{1}{r}}, \qquad r > n.
\end{equation}
The normalization constant $(1-\frac{n}{r})^{\frac{1}{n}}$ in \eqref{NA} will play a key role in our results.
Then we establish the following affine fractional Moser--Trudinger inequalities.

\begin{thm}\label{ThmFractCLYYIntro}
	Let $f \in B^{n/r}_{r, n}, r > n$, with $0 < |\emph{\text{supp }}f| < \infty$. Then there exists a constant $c_n$, depending only on $n$, such that for every $\beta > c_n$ we have
	\begin{equation}\label{Intro16}
		\frac{1}{|\emph{supp } f|}	\int_{\R^n} \bigg[\exp \bigg(\frac{|f(x)|}{\beta  \omega_n^{1/r} \mathcal{G}_r(f)} \bigg)^{n'} - 1 \bigg] \, dx \leq A,
	\end{equation}
	where $A$ is an absolute constant.
\end{thm}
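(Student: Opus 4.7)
The proof naturally splits into two stages, extending the Haddad--Ludwig methodology from the sub-critical range $p < n/s$ to the exponential-integrability regime identified in Subsection \ref{SectionObs}.

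\emph{Stage 1: A non-affine Besov Moser--Trudinger inequality.} I would first establish the Euclidean version: there exist constants $c_n, A > 0$ depending only on $n$, such that for every $r > n$ and every $f \in B^{n/r}_{r,n}$ with $0 < |\text{supp } f| < \infty$,
\begin{equation}\label{Eq:NonAffine}
\frac{1}{|\text{supp } f|}\int_{\R^n}\biggl[\exp\biggl(\frac{|f(x)|}{c_n\,\omega_n^{1/r}(1-n/r)^{1/n}\,\|f\|_{B^{n/r}_{r,n}}}\biggr)^{n'}-1\biggr] dx \le A.
\end{equation}
The choice $q = n$ is essential: it delivers exponential integrability of the optimal order $n'$, while the strictly larger space $W^{n/r,r} = B^{n/r}_{r,r}$ would only yield the weaker order $r'$ (this is precisely the point of Subsection \ref{SectionObs}). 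The prefactor $(1-n/r)^{1/n}$ captures the sharp Bourgain--Brezis--Mironescu-type behaviour of the embedding constant as the smoothness $s = n/r \to 1^-$. To prove \eqref{Eq:NonAffine}, I would (i) reduce to the radial decreasing case via the classical Pólya--Szegő inequality for $\omega(f,t)_r$ (Proposition \ref{ThmPS} with $K = B^n$), which only decreases $\|f\|_{B^{n/r}_{r,n}}$ and preserves the left-hand side; (ii) combine Hardy-type pointwise bounds of the form $f^*(t) \lesssim t^{-1/r}\,\omega(f,t^{1/n})_r$ with the definition of the $B^{n/r}_{r,n}$-norm to obtain a sharp decay estimate on $f^*$; (iii) apply the layer-cake formula and a one-dimensional exponential estimate, carefully tracking constants so that $c_n$ is uniform in $r$.

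\emph{Stage 2: Upgrade to the affine invariant $\mathcal{G}_r(f)$.} Inequality \eqref{Eq:NonAffine} is only invariant under rigid motions. To upgrade to the fully affine statement \eqref{Intro16}, I would apply it to $f \circ T^{-1}$ for an arbitrary $T \in SL(n,\R)$: since $|\det T|=1$, both $|\text{supp } f|$ and the integral on the left are preserved, so \eqref{Eq:NonAffine} holds with $\|f\|_{B^{n/r}_{r,n}}$ replaced by $\|f \circ T^{-1}\|_{B^{n/r}_{r,n}}$ for every such $T$. Taking the infimum and invoking the anisotropic Pólya--Szegő inequality (Proposition \ref{ThmPS}) together with the identification of the anisotropic Besov space $B^{n/r}_{r,n;K}$ via the Besov polar projection body (Proposition \ref{Prop3.13}), one obtains the equivalence
$$\inf_{T \in SL(n,\R)}\|f \circ T^{-1}\|_{B^{n/r}_{r,n}} \asymp |\Pi^{*,n/r}_{r,n} f|^{-1/r},$$
up to a purely dimensional constant that can be absorbed into $c_n$. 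Combining this with the definition \eqref{NA} of $\mathcal{G}_r(f)$ produces precisely \eqref{Intro16}.

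\emph{Main obstacle.} The principal difficulty lies in Stage 1: obtaining \eqref{Eq:NonAffine} with constants uniform in $r \in (n,\infty)$ and with the sharp prefactor $(1-n/r)^{1/n}$. Fractional Moser--Trudinger embeddings for $B^{n/r}_{r,n}$ are known for fixed parameters, but tracking the Bourgain--Brezis--Mironescu-type scaling of the constant as $s = n/r \to 1^-$ demands careful bookkeeping in the rearrangement and Hardy-type arguments, in the spirit of \cite{BBM02, Mazya}. Once Stage 1 is in place, Stage 2 reduces to a formal application of affine invariance and the anisotropic convex-geometric machinery developed in Section \ref{Section3}.
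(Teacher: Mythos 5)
Your two-stage plan — Euclidean Moser--Trudinger first, then upgrade to the affine statement by infimizing over $\mathrm{SL}(n)$ — is genuinely different from what the paper does, and the second stage has a real gap.

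The paper's order of deduction is the reverse of yours. It proves the affine inequality \eqref{Intro16} \emph{first}: it combines the sharp $L^q$-embedding $\|f^\star\|_{L^q}\lesssim q^{1/n'}(1-n/r)^{1/n}|\text{supp }f|^{1/q}\|f^\star\|_{B^{n/r}_{r,n}}$ (Theorem \ref{ThmEmbBesov1}, established via wavelets and a careful $K$-functional estimate) with the identity $\|f^\star\|_{B^{n/r}_{r,n}}\approx\omega_n^{1/r}|\Pi^{*,n/r}_{r,n}f^\star|^{-1/r}$ (Proposition \ref{Prop348}, which uses that $\Pi^{*,n/r}_{r,n}f^\star$ is a ball) and the Besov P\'olya--Szeg\H{o} inequality $|\Pi^{*,n/r}_{r,n}f^\star|^{-1/r}\lesssim|\Pi^{*,n/r}_{r,n}f|^{-1/r}$ (Theorem \ref{ThmPSBod}). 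This gives the affine $L^q$ bound \eqref{PF3}, and the Moser--Trudinger inequality then follows by summing a Taylor series of the exponential with $q=kn'$ and Stirling's formula. The non-affine version \eqref{SharpFMT} is a \emph{corollary} of the affine one via Proposition \ref{PropHolder}, not an intermediate step.

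The gap in your Stage 2 is the claimed equivalence $\inf_{T\in\mathrm{SL}(n)}\|f\circ T^{-1}\|_{B^{n/r}_{r,n}}\asymp|\Pi^{*,n/r}_{r,n}f|^{-1/r}$. One direction is cheap: Proposition \ref{PropHolder} gives $|\Pi^{*,n/r}_{r,n}f|^{-1/r}\lesssim\|f\|_{B^{n/r}_{r,n}}$, and affine invariance of the left side yields $|\Pi^{*,n/r}_{r,n}f|^{-1/r}\lesssim\inf_T\|f\circ T^{-1}\|$. But you need the \emph{reverse} inequality $\inf_T\|f\circ T^{-1}\|_{B^{n/r}_{r,n}}\lesssim|\Pi^{*,n/r}_{r,n}f|^{-1/r}$, and nothing in the paper proves it. It does not follow from Proposition \ref{Prop3.13} (which characterizes anisotropic Sobolev norms by anisotropic moduli, not $\mathrm{SL}(n)$-infima), nor from Proposition \ref{Prop348} (which gives $\|f^\star\|_{B^{n/r}_{r,n}}\lesssim|\Pi^{*,n/r}_{r,n}f|^{-1/r}$, but $f^\star$ is \emph{not} in the $\mathrm{SL}(n)$-orbit of $f$, so the infimum is not controlled). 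What you are implicitly asserting is a Besov analogue of the Huang--Li optimal-Sobolev-norm result \cite[Theorem 1.2]{HuangLi} that the paper invokes (as \eqref{ProofOptimAff1}) only for the classical $W^1_n$ and $\Pi^*_n$, and which does not transfer automatically to $B^{n/r}_{r,n}$ and $\Pi^{*,n/r}_{r,n}$. The whole point of the paper's symmetrization strategy is precisely to bypass this: rather than optimizing over $T\in\mathrm{SL}(n)$, one replaces $f$ by its Schwarz symmetral $f^\star$, for which the polar projection body is a Euclidean ball so the comparison between $\|f^\star\|_{B^{n/r}_{r,n}}$ and $|\Pi^{*,n/r}_{r,n}f^\star|^{-1/r}$ is an identity, and then one returns to $f$ by the Besov P\'olya--Szeg\H{o} inequality. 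Unless you prove the $\mathrm{SL}(n)$-infimum comparison independently (a nontrivial geometric statement), Stage 2 does not close, and you should adopt the symmetrization route instead.
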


The quantity  $\mathcal{G}_r(f)$ is  strongly connected with classical  polar projection body in the following sense.

\begin{thm}\label{Theorem5.6}
	There exists a purely dimensional constant $c_n$ such that\footnote{$r_0$ is an unessential parameter that indicates that $r$ is close to $n$. Without loss of generality, one may think that $r_0 = 2n$.}
	\begin{equation}\label{D2}
		\mathcal{G}_r(f) \leq c_n  |\Pi^*_n f|^{-1/n}, \qquad  \forall r \in (n, r_0),
	\end{equation}
	and
	\begin{equation}\label{LimP1}
	\lim_{r \to n+} \mathcal{G}_r(f) = n^{-\frac{1}{n}}  |\Pi^*_n f|^{-\frac{1}{n}}, \qquad \text{for} \qquad  f \in C^2_c(\R^n).
\end{equation}
\end{thm}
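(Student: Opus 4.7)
The plan is to exploit the polar representation of the volume,
\[
|\Pi^{*,s}_{r,n} f| = \frac{1}{n} \int_{\S^{n-1}} h_{\Pi^{s}_{r,n} f}(\xi)^{-n} \, d\sigma(\xi),
\]
together with the analogous formula for $|\Pi^*_n f|$, to reduce both \eqref{D2} and \eqref{LimP1} to pointwise (in $\xi$) statements about the Besov support function $h_{\Pi^{s}_{r,n} f}(\xi)$. By the definition of Besov polar projection bodies in Section \ref{Section3}, this support function is proportional, up to an explicit dimensional constant, to the one-dimensional directional Besov functional
\[
I_s(\xi) := \int_0^\infty t^{-sn-1} \bigl\| f(\cdot + t\xi) - f \bigr\|_{L^r}^n \, dt, \qquad s = \frac{n}{r},
\]
on which I would perform a Bourgain--Brezis--Mironescu renormalization as $s \to 1^-$.

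For the limit \eqref{LimP1}, I would split $I_s(\xi)$ at $t=1$. The tail $\int_1^\infty$ is uniformly bounded by $C\|f\|_{L^r}^n$ and thus negligible after multiplication by $(1-s)$. On $(0,1)$, the $C^2$ regularity and compact support of $f$ yield the Taylor expansion $f(\cdot+t\xi)-f = t(\xi \cdot \nabla) f + O(t^2)$ uniformly in $x$, and a standard fractional calculus computation gives
\[
\lim_{s \to 1^-} (1-s) \int_0^1 t^{-sn-1} \|f(\cdot+t\xi)-f\|_{L^r}^n \, dt = \frac{1}{n} \|(\xi \cdot \nabla) f\|_{L^n}^n.
\]
Since $h_{\Pi_n f}(\xi)^n$ is proportional to $\|(\xi \cdot \nabla) f\|_{L^n}^n$, this shows $(1-s) h_{\Pi^{s}_{r,n} f}(\xi)^n$ converges uniformly in $\xi \in \S^{n-1}$ to a multiple of $h_{\Pi_n f}(\xi)^n$, with the constant traceable from the normalization of $\Pi^{*,s}_{r,n} f$. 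Raising to the $-1$ power and integrating by dominated convergence over $\S^{n-1}$ produces $(1-s) |\Pi^{*,s}_{r,n} f|^{-s} \to n^{-1} |\Pi^*_n f|^{-1}$, and taking the $n$-th root delivers \eqref{LimP1}.

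For the uniform estimate \eqref{D2}, the same decomposition combined with the one-sided modulus bound $\|f(\cdot+t\xi)-f\|_{L^r} \leq t\|(\xi \cdot \nabla) f\|_{L^r}$ (valid for $f \in W^1_r$ and extended by density) yields
\[
(1-s) I_s(\xi) \leq c_n \|(\xi \cdot \nabla) f\|_{L^r}^n + c_n (1-s) \|f\|_{L^r}^n,
\]
uniformly for $r \in (n, r_0)$. The compact support of $f$ and continuity of $p \mapsto \|\cdot\|_{L^p}$ dominate the right-hand side by a constant multiple of $\|(\xi \cdot \nabla) f\|_{L^n}^n$. Inverting, integrating over $\S^{n-1}$, and raising to the appropriate power yields $\mathcal{G}_r(f) \leq c_n |\Pi^*_n f|^{-1/n}$.

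The main technical obstacle is tracking the explicit dimensional constant $n^{-1/n}$ through the chain of nonlinear operations (inverting, raising to the $n$-th power, integrating over $\S^{n-1}$, and taking the $n/r$-th power) as both $s \to 1^-$ and $r \to n^+$ simultaneously. Justifying dominated convergence requires a uniform-in-$s$ pointwise lower bound on $h_{\Pi^{s}_{r,n} f}(\xi)$, so the two parts of the theorem are intertwined: the uniform estimate \eqref{D2} furnishes precisely the ingredient needed to pass to the limit in \eqref{LimP1}. A secondary issue is the comparison of $L^r$- and $L^n$-directional gradient norms as $r \to n^+$, handled by the compact support hypothesis.
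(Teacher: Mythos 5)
Your approach to the limit \eqref{LimP1} is broadly the same as the paper's: polar representation of the volume, a directional Bourgain--Brezis--Mironescu limit $\left(1-\frac{n}{r}\right)^{r/n}\|\xi\|_{\Pi^{*,n/r}_{r,n}f}^{-n} \to \frac{1}{n}\|\xi\|_{\Pi^*_n f}^{-n}$ in the simultaneous regime $s=n/r\to1^-$, $r\to n^+$ (the paper's Theorem~\ref{Th1}), and dominated convergence on $\S^{n-1}$. However, you misidentify the ingredient that makes dominated convergence go through. You write that ``the uniform estimate \eqref{D2} furnishes precisely the ingredient needed,'' but \eqref{D2} is an \emph{integral} bound over the sphere (equivalently a lower bound on $|\Pi^{*,n/r}_{r,n}f|$), not a pointwise bound on $\|\xi\|_{\Pi^{*,n/r}_{r,n}f}^{-n}$; moreover, after removing the normalization $(1-n/r)^{1/n}$, the lower bound on the volume it yields degenerates as $r\to n^+$. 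What the paper actually uses is the directional Poincar\'e inequality of Theorem~\ref{ThmOptim2}, which gives a genuinely \emph{pointwise in $\xi$}, uniform-in-$r$ upper bound $\sup_{\xi,r}\left(1-\frac{n}{r}\right)^{-r/n}\|\xi\|^{-n}_{\Pi^{*,n/r}_{r,n}f}\lesssim C(f,\Omega)$ (this is \eqref{LimP2}), an object structurally different from \eqref{D2}.

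For \eqref{D2} your proposal has a genuine gap. After the split at $t=1$ and the modulus bound you obtain, schematically,
\[
\left(1-\tfrac{n}{r}\right)\,\|\xi\|^{n^2/r}_{\Pi^{*,n/r}_{r,n}f}\lesssim \|(\xi\cdot\nabla)f\|_{L^r}^n + \left(1-\tfrac{n}{r}\right)\|f\|_{L^r}^n,
\]
and you then assert that the right-hand side is controlled by $c_n\|(\xi\cdot\nabla)f\|_{L^n}^n$ ``by compact support and continuity of $p\mapsto\|\cdot\|_{L^p}$.'' This cannot give a purely dimensional $c_n$: on a fixed bounded support, H\"older gives $\|g\|_{L^n}\leq|\Omega|^{1/n-1/r}\|g\|_{L^r}$, which is the wrong direction, and the interpolation $\|g\|_{L^r}\leq\|g\|_{L^\infty}^{1-n/r}\|g\|_{L^n}^{n/r}$ introduces the ratio $\|g\|_{L^\infty}/\|g\|_{L^n}$, which is unbounded over $f$. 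Note also that \eqref{D2} should hold for all $f\in W^1_n$, where $\nabla f$ need not lie in $L^\infty$ at all. The paper avoids this entirely by a different mechanism: since $\mathcal{G}_r$ and $|\Pi^*_n\cdot|$ are affine invariants, one may replace $f$ by $f\circ T_n$ for the special $T_n\in\mathrm{SL}(n)$ from the Huang--Li comparison $c_n\|\nabla(f\circ T_n)\|_{L^n}\leq|\Pi^*_n f|^{-1/n}$, and then apply \eqref{RelationB} together with the sharp Franke--Jawerth-type embedding of Theorem~\ref{ThmSharpIlin}, $\left(1-\frac{n}{r}\right)^{1/n}\|g\|_{B^{n/r}_{r,n}}\lesssim\|\nabla g\|_{L^n}$, with a constant depending only on $n$ and $r_0$. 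This is the step your argument is missing: a genuine $\dot W^1_n\hookrightarrow B^{n/r}_{r,n}$ embedding with the sharp blow-up rate, not a direction-by-direction $L^r\to L^n$ comparison.
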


\begin{rem}
We claim that \eqref{LimP1} is the critical version of \eqref{HLLimit} with $p=n$, namely,
\begin{equation}\label{LimP1Special}
		\lim_{s \to 1-}  (1-s)^{\frac{1}{n}} |\Pi^{*, s}_n f|^{-\frac{s}{n}} = n^{-\frac{1}{n}} |\Pi^*_n f|^{-\frac{1}{n}}.
\end{equation}
Observe that both limits \eqref{LimP1} and \eqref{LimP1Special} have the same outcome, but they refer to different differential dimensions. To be more precise, recall that the so-called \emph{differential dimension} of $B^s_{p, q}$ is defined by \cite[Remark 11.5, pp. 172--173]{Triebel01} $$d = d(B^s_{p, q})= s -\frac{n}{p}$$
and the sign of $d$ classifies embedding theorems for $B^{s}_{p, q}$ in three possible regimes: sub-critical $d < 0$, critical $d=0$, and super-critical $d > 0$. In particular $$d(W^{s, n}) = d(B^s_{n, n}) = s-1 < 0 \qquad \text{(sub-critical case)}$$ and $$d\Big(B^{\frac{n}{r}}_{r, n} \Big) = \frac{n}{r}-\frac{n}{r} = 0 \qquad \text{(critical case)}.$$
Hence  \eqref{LimP1} corresponds to the critical version of \eqref{LimP1Special}, where the corresponding asymptotics are studied with respect to the smoothness parameter $s$ for a fixed integrability $n$. This is not the case in \eqref{LimP1}, where the asymptotical relation involves
both  smoothness $n/r$ and integrability $r$ parameters.
\end{rem}

Comparing \eqref{Intro16} with \eqref{Intro7} and \eqref{Intro5}, it follows from Theorem \ref{Theorem5.6}  that \eqref{Intro16} provides a significantly stronger estimate than \eqref{Intro7} in both senses: pointwise and asymptotic. In comparison with  \eqref{Intro5} 
 the pointwise strengthen exhibited by \eqref{Intro16} is a new phenomenon.

\begin{thm}\label{ThmOptim}
\begin{enumerate}
	\item[\rm(i)] (Pointwise improvement) For every $f \in B^{n/r}_{r, n}, r \in (n, r_0)$, with $0 < |\emph{supp }f | < \infty$, we have
		\begin{equation*}
		\frac{1}{|\emph{supp } f|}	 \int_{\R^n} \bigg[ \exp \bigg(c_n' \frac{|f(x)|}{  |\Pi^{*}_{n} f|^{-1/n}} \bigg)^{n'} -1 \bigg] \, dx \leq \frac{1}{|\emph{supp } f|}	 \int_{\R^n} \bigg[\exp \bigg( \frac{|f(x)|}{\beta \omega_n^{1/r} \mathcal{G}_r(f)} \bigg)^{n'} - 1 \bigg] \, dx \leq A,
	\end{equation*}
	where the constants $A, \beta$ are given in Theorem \ref{ThmFractCLYYIntro} and $c_n'$  depends only on $n$.

		\item[\rm(ii)] (Asymptotic improvement) Let $\Omega$ be a bounded open set in $\R^n$.  Then, for every $f \in C^1$ with support in $\Omega$,
	\begin{equation*}
	\lim_{r \to n+}		\int_{\R^n} \bigg[ \exp \bigg(\frac{|f(x)|}{\beta \omega_n^{1/r} \mathcal{G}_r(f)} \bigg)^{n'} -1 \bigg] \, dx =  \int_{\R^n} \bigg[ \exp \bigg(\frac{|f(x)|}{\beta \omega_n^{1/n} n^{-1/n}  |\Pi^{*}_{n} f|^{-1/n}} \bigg)^{n'} -1 \bigg] \, dx.
	\end{equation*}
	\end{enumerate}
\end{thm}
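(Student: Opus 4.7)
The plan is to derive both parts of Theorem \ref{ThmOptim} as essentially immediate consequences of Theorem \ref{Theorem5.6}, combined with Theorem \ref{ThmFractCLYYIntro} for part (i).

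For part (i), the strategy is first to observe that the second inequality is exactly the conclusion of Theorem \ref{ThmFractCLYYIntro}. For the first inequality, I would invoke the pointwise bound \eqref{D2}, namely $\G_r(f) \le c_n |\Pi^*_n f|^{-1/n}$ valid for all $r \in (n, r_0)$. Since $\omega_n^{1/r}$ stays uniformly bounded on $(n, r_0)$, one may set
\[
c_n' := \beta\, c_n \max\{\omega_n^{1/n}, \omega_n^{1/r_0}\},
\]
which yields the pointwise estimate $\beta\, \omega_n^{1/r}\, \G_r(f) \le c_n' |\Pi^*_n f|^{-1/n}$. Because $t \mapsto e^{t^{n'}} - 1$ is increasing on $[0, \infty)$, this transfers to the required pointwise comparison between the two integrands; integrating over $\R^n$ and dividing by $|\supp f|$ concludes the argument.

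For part (ii), the approach is to apply the dominated convergence theorem to the family
\[
F_r(x) := \exp\bigg(\frac{|f(x)|}{\beta\, \omega_n^{1/r}\, \G_r(f)}\bigg)^{n'} - 1, \qquad r > n.
\]
Pointwise convergence $F_r(x) \to F_n(x)$ as $r \to n+$ (where $F_n$ denotes the right-hand integrand in the statement) is a direct consequence of \eqref{LimP1}, which asserts $\G_r(f) \to n^{-1/n} |\Pi^*_n f|^{-1/n} =: G_0$. To manufacture a dominating function, I would note that $G_0 > 0$ (otherwise $f \equiv 0$ and the identity is trivial), so $\G_r(f) \ge G_0/2$ for $r$ sufficiently close to $n$; combined with $\|f\|_{L^\infty} < \infty$ (since $f \in C^1$ with $\supp f \subset \Omega$) and the uniform positivity of $\omega_n^{1/r}$ near $r = n$, this yields a bound $F_r(x) \le M\, \mathbf{1}_\Omega(x)$ for some constant $M$ independent of $r$ close to $n$. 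Integrability of the dominator follows from $|\Omega| < \infty$, and dominated convergence delivers the desired identity.

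The main technical subtlety I expect concerns the applicability of \eqref{LimP1} to $f \in C^1$ with compact support in $\Omega$, since Theorem \ref{Theorem5.6} states that limit only for $f \in C^2_c(\R^n)$. The plan is to bridge this gap via a standard mollification argument: approximate $f$ by $f_\epsilon \in C^\infty_c(\Omega)$ in a norm with respect to which both $\G_r(\cdot)$ and $|\Pi^*_n \cdot|^{-1/n}$ depend continuously, and then interchange the limits $r \to n+$ and $\epsilon \to 0+$ by equicontinuity. This is the only step that is not formally automatic from the ingredients already assembled.
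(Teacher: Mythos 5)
Your part (i) matches the paper's proof, which simply cites \eqref{D2}; no comment needed there. For part (ii), your overall plan (pointwise convergence from \eqref{LimP1} plus dominated convergence) is the same as the paper's, but the construction of the dominating function differs, and this is where the routes diverge in an interesting way. You obtain the dominator by arguing that $\mathcal{G}_r(f)\ge G_0/2$ for $r$ close to $n$, which is valid but presupposes the limit \eqref{LimP1} already in hand. The paper instead obtains a \emph{uniform} lower bound on $\mathcal{G}_r(f)$ over the whole range $r\in(n,2n)$ directly from the estimate \eqref{LimP2}, namely
\[
\mathcal{G}_r(f)^{-r}\le \frac{1}{n}\int_{\S^{n-1}}\sup_{r\in(n,2n)}\Big(1-\tfrac{n}{r}\Big)^{-r/n}\|\xi\|^{-n}_{\Pi^{*,n/r}_{r,n}f}\,d\xi
\lesssim\frac{1}{|\Omega|}\Big(\frac{\operatorname{diam}\Omega}{\min\{1,A\}\,A}\Big)^n,
\]
so that $\sup_{r\in(n,2n)}F_r(x)\lesssim \exp(\lambda|f(x)|)^{n'}-1\in L^1$. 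The advantage is that \eqref{LimP2} is derived from the Poincar\'e inequality (Theorem \ref{ThmOptim2}), which holds for $f\in C^1$ with support in $\Omega$, whereas your bound is inherited from a statement (\eqref{LimP1}) that the paper only establishes for $f\in C^2_c$. Thus the paper's decoupling of the two roles of $\mathcal{G}_r$ --- bound and limit --- is cleaner, and also avoids the logically unnecessary dichotomy $G_0=0$ versus $G_0>0$.

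You correctly flag that \eqref{LimP1} (Theorem \ref{Theorem5.6}) is stated for $f\in C^2_c(\R^n)$ while Theorem \ref{ThmOptim}(ii) is formulated for $f\in C^1$; the paper does not explicitly bridge this regularity gap, so your remark about needing a density/mollification step is fair and is an observation the paper would need to address too. Your proposal leaves this step unexecuted, so strictly speaking it is not complete, but since the paper's own proof passes over the same point silently, this is a gap in the original exposition rather than a defect unique to your argument.
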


\subsubsection{
\textbf{The BBM phenomenon for fractional Moser--Trudinger inequalities.}} Let $n < r < \infty$ and $q > 1$. It is well known  that there exists a constant $\alpha = \alpha(n, r, q)$, depending only on $n, q$ and $r$, such that (cf. \cite[Theorem 9.1]{Peetre66})
\begin{equation}\label{PMT}
		\frac{1}{|\text{supp } f|} \int_{\R^n} \bigg[ \exp \bigg(\alpha \,  \frac{|f(x)|}{\|f\|_{B^{n/r}_{r, q}}} \bigg)^{q'}-1 \bigg] \, dx \leq A
\end{equation}
for every $f \in B^{n/r}_{r, q}$ with $0< |\text{supp } f| < \infty$. Here, $A$ is an absolute constant.
This inequality can  be viewed as the fractional counterpart of the Moser--Trudinger inequality \eqref{Intro6}.   In sharp contrast to \eqref{Intro6}, the value of the optimal constant $\alpha$ in \eqref{PMT} is unknown, although  some partial results (in the special case $r=q$) have recently been obtained in \cite{PariniRuf} (see also \cite{Martinazzi}).

As an application of the affine inequalities stated in Theorem \ref{ThmFractCLYYIntro} and the fact that (cf. Proposition \ref{PropHolder} and compare with \eqref{RelIntro})
$$
	\mathcal{G}_r(f) \leq c_n  \|f\|_{B^{n/r}_{r, n}},
$$
we are able to get the optimal behaviour of $\alpha$ in \eqref{PMT}   with respect to $r$ showing the Bourgain--Brezis--Mironescu phenomenon for Moser--Trudinger inequalities. More precisely, in Theorem \ref{ThmIntro1.12} below\footnote{This result refers to the case $q=n$ in \eqref{PMT}, but this is not a technical assumption since the method can be easily modified to deal with the general case $q \in [1, \infty)$. For the purposes of this paper, the  case $q=n$ is of special relevance and to pursue the general case would take us too far away from our main motivations.} we establish the family of inequalities \eqref{SharpFMT} that sharpens \eqref{PMT}  and converges as $r \to n+$ to the classical \eqref{Intro6}. In fact, we obtain a stronger assertion: for each $r$, \eqref{SharpFMT} is { pointwisely} stronger than \eqref{Intro6}.

\begin{thm}\label{ThmIntro1.12}
	 Let $f \in B^{n/r}_{r, n}, \, r > n,$ with $0 < |\emph{supp } f| < \infty$. Then there exists a constant $c_{n}$, depending only on $n$, such that
	\begin{equation}\label{SharpFMT}
		\frac{1}{|\emph{supp } f|} \int_{\R^n} \bigg[ \exp \bigg(c_{n} \,  \frac{|f(x)|}{(1-\frac{n}{r})^{1/n} \|f\|_{B^{n/r}_{r, n}}} \bigg)^{n'} - 1 \bigg] \, dx \leq A,
	\end{equation}
	where $A$ is a certain absolute constant.
	Furthermore,  \eqref{SharpFMT}  consists of an improvement of \eqref{Intro6} since
	\begin{equation}\label{SharpFMT2}
	\int_{\R^n} \bigg[ \exp \bigg(\tilde{c}_n \,  \frac{|f(x)|}{\|\nabla f\|_{L^n}} \bigg)^{n'}-1 \bigg] \, dx \leq \int_{\R^n} \bigg[ \exp \bigg(c_n \,  \frac{|f(x)|}{(1-\frac{n}{r})^{1/n} \|f\|_{B^{n/r}_{r, n}}} \bigg)^{n'} - 1 \bigg] \, dx
	\end{equation}
	for every $r \in (n, r_0)$ and it is optimal in the sense that (cf. \eqref{Intro6})
	\begin{equation}\label{SharpFMT1}
		\lim_{r \to n+} \int_{\R^n} \bigg[ \exp \bigg(c_n \,  \frac{|f(x)|}{(1-\frac{n}{r})^{1/n} \|f\|_{B^{n/r}_{r, n}}} \bigg)^{n'} - 1 \bigg] \, dx =  \int_{\R^n} \bigg[ \exp \bigg(\frac{n^{1/n} c_n}{\gamma_n} \,  \frac{|f(x)|}{\|\nabla f\|_{L^n}} \bigg)^{n'}-1 \bigg] \, dx
	\end{equation}
	for every $f \in C^2_c$, where $\gamma_n =\big( \frac{\alpha_{n, n}}{2 n \omega_n} \big)^{1/n}$.
\end{thm}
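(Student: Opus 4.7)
The plan is to deduce \eqref{SharpFMT} from the affine Moser--Trudinger inequality of Theorem~\ref{ThmFractCLYYIntro} by inserting a H\"older-type bound for the Besov polar projection body, and then to obtain \eqref{SharpFMT2} and \eqref{SharpFMT1} from uniform Bourgain--Brezis--Mironescu estimates and their sharp asymptotic counterparts linking $(1-n/r)^{1/n}\|f\|_{B^{n/r}_{r,n}}$ to $\|\nabla f\|_{L^n}$.

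For \eqref{SharpFMT}, I first invoke the H\"older/dual mixed volume inequality $|\Pi^{*, n/r}_{r, n} f|^{-1/r} \leq c_n \|f\|_{B^{n/r}_{r,n}}$ (Proposition~\ref{PropHolder}, the Besov analog of \eqref{RelIntro}). Combined with $\mathcal{G}_r(f) = (1-n/r)^{1/n}|\Pi^{*,n/r}_{r,n}f|^{-1/r}$, this gives
\[
\mathcal{G}_r(f) \leq c_n (1-n/r)^{1/n}\|f\|_{B^{n/r}_{r,n}}.
\]
Substituting this bound into the denominator of the exponent in Theorem~\ref{ThmFractCLYYIntro}, using that $\omega_n^{1/r}$ is uniformly bounded for $r>n$ together with monotonicity of $\exp$, yields \eqref{SharpFMT} with a purely dimensional constant.

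For \eqref{SharpFMT2}, the pointwise comparison of integrands reduces to the uniform Bourgain--Brezis--Mironescu bound
\[
(1-n/r)^{1/n}\|f\|_{B^{n/r}_{r,n}} \leq C_n \|\nabla f\|_{L^n}, \qquad r \in (n, r_0),
\]
for $f$ with $\|\nabla f\|_{L^n}<\infty$ and bounded support. I would derive it by splitting $\int_0^\infty t^{-n^2/r}\omega(f,t)_r^n \tfrac{dt}{t}$ at $t=1$: on $(0,1)$ use $\omega(f,t)_r \lesssim t\|\nabla f\|_{L^r}$, which produces a factor $(1-n/r)^{-1}$ exactly cancelling the prefactor $(1-n/r)$; on $(1,\infty)$ use $\omega(f,t)_r \lesssim \|f\|_{L^r}$ for a bounded tail; finally pass from $L^r$ to $L^n$ on $\operatorname{supp} f$ via H\"older and Sobolev-type estimates. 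For \eqref{SharpFMT1}, I invoke the sharp BBM asymptotic
\[
\lim_{r \to n+}(1-n/r)^{1/n}\|f\|_{B^{n/r}_{r,n}} = \frac{\gamma_n}{n^{1/n}}\|\nabla f\|_{L^n}, \qquad f \in C^2_c,
\]
with $\gamma_n = (\alpha_{n,n}/(2n\omega_n))^{1/n}$; pointwise convergence of the integrand, combined with dominated convergence (using the uniform bound above as a dominant on $\operatorname{supp} f$), transfers the limit inside the integral and produces the factor $n^{1/n}c_n/\gamma_n$ on the right-hand side of \eqref{SharpFMT1}.

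The hard part will be establishing the sharp BBM asymptotic with the explicit constant $\gamma_n/n^{1/n}$. Because both the smoothness parameter $s=n/r$ and the integrability exponent $p=r$ vary simultaneously as $r\to n+$, the limit does not follow directly from \eqref{Intro3} (which fixes $p$ and lets $s\to 1-$), and a joint asymptotic analysis of $\omega(f,t)_r$ is required. The first-order Taylor expansion $f(x+h)-f(x) = \nabla f(x)\cdot h + O(|h|^2)$, combined with the polar integration identity $\int_{\S^{n-1}}|e\cdot\xi|^r d\sigma(\xi)=\alpha_{n,r}$ (which is continuous in $r$), the volume normalization $|B^n|=\omega_n$, and the radial integral $\int_0^t\rho^{n+r-1}d\rho = t^{n+r}/(n+r)$, produces the precise constant in the limit; uniform control of the remainder $O(|h|^2)$ across the family $r\in(n,r_0)$ is the main technical point that needs to be addressed.
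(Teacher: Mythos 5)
Your overall strategy is the one the paper uses: deduce \eqref{SharpFMT} from Theorem~\ref{ThmFractCLYYIntro} plus the H\"older bound $|\Pi^{*,n/r}_{r,n}f|^{-n/r}\lesssim\|f\|^n_{B^{n/r}_{r,n}}$ (Proposition~\ref{PropHolder}), obtain \eqref{SharpFMT1} from a joint-limit BBM formula and dominated convergence, and obtain \eqref{SharpFMT2} from a uniform estimate comparing $(1-\tfrac{n}{r})^{1/n}\|f\|_{B^{n/r}_{r,n}}$ with $\|\nabla f\|_{L^n}$. The treatment of \eqref{SharpFMT} is correct and identical to the paper's. For \eqref{SharpFMT1}, your plan (Taylor expansion, continuity of $\alpha_{n,r}$ in $r$, uniform remainder control, dominated convergence) is essentially what the paper does, formalized there through Lemma~\ref{Th2} and the uniform convergence estimate \eqref{6.10}; for $f\in C^2_c$ the dominating function is available and the argument closes.

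However, your proposed proof of the uniform bound
$$(1-\tfrac{n}{r})^{1/n}\|f\|_{B^{n/r}_{r,n}}\leq C_n\|\nabla f\|_{L^n}, \qquad r\in(n,r_0),$$
contains a genuine gap. Splitting the integral at $t=1$ and using $\omega(f,t)_r\lesssim t\|\nabla f\|_{L^r}$ on $(0,1)$ produces a bound in terms of $\|\nabla f\|_{L^r}$ with $r>n$; but you cannot then ``pass from $L^r$ to $L^n$'' on the support of $f$: on a bounded set H\"older gives $\|\nabla f\|_{L^n}\lesssim\|\nabla f\|_{L^r}$, the \emph{wrong} direction, and no Sobolev-type embedding converts $\|\nabla f\|_{L^n}<\infty$ into a uniform control of $\|\nabla f\|_{L^r}$ for $r>n$. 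In fact a function $f\in W^{1,n}$ with bounded support need not have $\nabla f\in L^r$ for any $r>n$, so the estimate you use on $(0,1)$ is not even finite in general. This is precisely why the paper does not argue this way; it instead proves the sharp Franke--Jawerth embedding (Theorem~\ref{ThmSharpIlin}), whose proof replaces the naive bound $\omega(f,t)_r\lesssim t\|\nabla f\|_{L^r}$ by the rearrangement estimate
$$\tilde\omega(f,t)_p\lesssim\Big(\int_0^{t^n}(|\nabla f|^*(u))^{\tilde r}\,du\Big)^{1/\tilde r}+t\Big(\int_{t^n}^\infty(|\nabla f|^*(u))^p\,du\Big)^{1/p}$$
with $\tilde r=\tfrac{np}{n+p}<n$, obtained by interpolating the $K$-functional for $(\dot W^1_{\tilde r},\dot W^1_p)$; this keeps every norm of $\nabla f$ at integrability $\leq n$ and closes the argument with constants uniform in $r\to n+$. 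Your plan needs to be replaced at this step by Theorem~\ref{ThmSharpIlin} (or an equivalent rearrangement/interpolation argument); once that is in place, the pointwise comparison of integrands in \eqref{SharpFMT2} follows immediately as in the paper.
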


For the proof of this result see Section \ref{Section6}.

\subsubsection{\textbf{Affine and fractional Moser--Trudinger inequalities: Putting all the pieces together}}
Combining Theorems \ref{ThmFractCLYYIntro}, \ref{ThmOptim} and \ref{ThmIntro1.12}, we complete the picture on affine and fractional Moser-Trudinger inequalities. See  Table \ref{Table2} and compare the outcome with Table \ref{Table1}.

\vspace{5mm}

\begin{table}[H]
  \centering
\begin{tabular}{c|c|c}
  \eqref{Intro6} & $\forall r \in (n, r_0)$ & Theorem \ref{ThmIntro1.12}  \\
  $\frac{1}{|\text{supp } f|} \int_{\R^n} \Big[ \exp \big(C \, \frac{|f(x)|}{\|\nabla f\|_{L^n}} \big)^{n'} - 1 \Big]  \, dx \leq A$ & $\Leftarrow$ & $\frac{1}{|\text{supp } f|} \int_{\R^n} \Big[ \exp \big(c_{n} \,  \frac{|f(x)|}{(1-\frac{n}{r})^{1/n} \|f\|_{B^{n/r}_{r, n}}} \big)^{n'} - 1 \Big] \, dx \leq A$ \\\hline
  $\Uparrow$ & \ & $\Uparrow$ \\\hline
  \eqref{Intro7} & $\forall r \in (n, r_0)$  & Theorem \ref{ThmFractCLYYIntro} \\
  $\frac{1}{|\text{supp } f|} \int_{\R^n}  \Big[ \exp \big(C \, \frac{|f(x)|}{c_n |\Pi^*_n f|^{-1/n}} \big)^{n'} - 1\Big] \, dx  \leq A$ & $\Leftarrow$ &  $\frac{1}{|\text{supp } f|}	 \int_{\R^n} \Big[\exp \big( \frac{|f(x)|}{\beta \omega_n^{1/r} \mathcal{G}_r(f)} \big)^{n'} - 1 \Big] \, dx \leq A$
\end{tabular}
\vspace{2mm}
  \caption{Affine and fractional Moser--Trudinger inequalities.}
    \label{Table2}
\end{table}

\subsubsection{\textbf{Affine and fractional Morrey inequalities}} In the super-limiting regime $s > n/p$,
 we obtain in  Section \ref{Section7} the following result.

\begin{thm}\label{ThmIntro116}
	Let $p > n$ and $s_0 \in (\frac{n}{p}, 1)$.  If\footnote{$s_0$ plays an auxiliary role and only indicates that values of $s$ sufficiently close to $1$ are of some interest.} $s \in (s_0, 1)$ then there exists a constant $C$, which is independent of $s$,  such that
	\begin{equation}\label{ThmPam1}
		\|f\|_{L^\infty} \leq C   |\emph{supp } f|^{\frac{s}{n}-\frac{1}{p}} (1-s)^{\frac{1}{p}}  |\Pi^{*, s}_p f|^{-\frac{s}{n}}
	\end{equation}
	provided that $f \in W^{s, p}$ with $|\emph{supp } f| < \infty$. Furthermore
	\begin{equation}\label{ThmPam2}
		(s(1-s))^{\frac{1}{p}}  |\emph{supp } f|^{\frac{s}{n}}  |\Pi^{*, s}_p f|^{-\frac{s}{n}} \leq c  |\emph{supp } f|^{\frac{1}{n}}  |\Pi^*_p f|^{-\frac{1}{n}}, \qquad \forall s \in (0, 1),
	\end{equation}
	 and
	\begin{equation}\label{ThmPam3}
		\lim_{s \to 1-}  (1-s)^{\frac{1}{p}}  |\emph{supp } f|^{\frac{s}{n}-\frac{1}{p}} |\Pi^{*, s}_p f|^{-\frac{s}{n}}= p^{-\frac{1}{p}}  |\emph{supp } f|^{\frac{1}{n}-\frac{1}{p}}  |\Pi^*_p f|^{-\frac{1}{n}}.
	\end{equation}
\end{thm}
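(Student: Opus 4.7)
The proof naturally splits into three pieces. My plan is to establish the pointwise comparison \eqref{ThmPam2} first, deduce the limit \eqref{ThmPam3} from it together with \eqref{HLLimit}, and finally prove the affine Morrey bound \eqref{ThmPam1} by combining the Haddad--Ludwig symmetrization with a Bourgain--Brezis--Mironescu-sharp fractional Morrey embedding.

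For \eqref{ThmPam2}, the strategy is to establish a Mazya--Shaposhnikova-type estimate within the anisotropic Besov framework developed in Section \ref{Section3}. For every compactly supported $f$ and every star body $K$ I would prove
$$
s(1-s)\int_0^\infty t^{-sp}\,\omega_K(f,t)_p^p\,\frac{dt}{t} \leq C\, |\text{supp } f|^{(1-s)p/n}\,\|f\|^p_{\dot{W}^{1,p}_K},
$$
by splitting the time integral at the threshold $t_0 \sim |\text{supp } f|^{1/n}$: for $t < t_0$ use the first-order bound $\omega_K(f,t)_p \lesssim t\,\|f\|_{\dot{W}^{1,p}_K}$ (producing a $(1-s)^{-1}$ contribution), while for $t > t_0$ use the crude bound $\omega_K(f,t)_p \lesssim \|f\|_{L^p} \lesssim |\text{supp } f|^{1/n}\|f\|_{\dot{W}^{1,p}_K}$ via a Poincar\'e inequality on the support (producing an $s^{-1}$ contribution). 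Specializing to the extremal star body associated with $f$ identifies the left-hand side with a constant multiple of $|\Pi^{*,s}_p f|^{-sp/n}$ and the right-hand side with $|\Pi^*_p f|^{-p/n}$ via the characterizations of polar projection bodies recorded in Section \ref{Section3}; taking $p$-th roots and rearranging yields \eqref{ThmPam2}. For \eqref{ThmPam3}, identity \eqref{HLLimit} gives $p(1-s)|\Pi^{*,s}_p f|^{-ps/n}\to|\Pi^*_p f|^{-p/n}$, so taking $p$-th roots and multiplying by $|\text{supp } f|^{s/n-1/p}\to|\text{supp } f|^{1/n-1/p}$ immediately produces the claimed identity, with the prefactor $p^{-1/p}$ stemming from the $p$ appearing inside \eqref{HLLimit}.

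For the affine Morrey inequality \eqref{ThmPam1}, I would apply the Haddad--Ludwig radial rearrangement $A_s f$, which is radial decreasing, preserves $L^\infty$-norm and support measure, and satisfies $\|A_s f\|_{W^{s,p}}^p = n\omega_n^{(n+sp)/n}\,|\Pi^{*,s}_p f|^{-sp/n}$ by construction. The remaining task is then a sharp fractional Morrey embedding of the form
$$
\|g\|_{L^\infty}^p \leq C\,(1-s)\,|\text{supp } g|^{sp/n-1}\,\|g\|_{W^{s,p}}^p, \qquad s \in (s_0,1),
$$
valid for radial decreasing $g$ with $C$ independent of $s$; applying it to $A_s f$ and exploiting the preservation of $L^\infty$-norm and support measure produces \eqref{ThmPam1}. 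The main obstacle lies precisely in this BBM-sharp Morrey embedding: the textbook proof of the fractional Morrey embedding yields a constant that blows up like $(1-s)^{-1/p}$ as $s \to 1-$, and one must exhibit the cancellation with an explicit $(1-s)^{1/p}$ factor. For radial decreasing $g$ I would pass to polar coordinates, write $G(r) := g(r e_1)$ with $G$ monotone and $\|g\|_{L^\infty} = G(0+)$, and invoke a weighted fractional Hardy-type inequality on $(0, R)$ with $R = (|\text{supp } g|/\omega_n)^{1/n}$, whose constant stays bounded for $s \in (s_0, 1)$ thanks to the standing assumption $s_0 > n/p$.
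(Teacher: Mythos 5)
Your treatment of \eqref{ThmPam3} matches the paper exactly: it is an immediate consequence of \eqref{HLLimit}. Your plan for \eqref{ThmPam1} is also structurally the same as the paper's: symmetrize (the paper uses the Schwarz symmetral $f^\star$, not a special ``$A_s f$''; the claimed \emph{identity} $\|A_sf\|_{W^{s,p}}^p = n\omega_n^{(n+sp)/n}|\Pi^{*,s}_pf|^{-sp/n}$ only holds with $\leq$ after applying the P\'olya--Szeg\H{o} estimate of Theorem \ref{ThmPSBod}, but this is all one needs), then apply a sharp BBM-scaled Morrey bound to the radial rearrangement. The paper's version of that Morrey bound, \eqref{Intro121}, is proven via the Besov embedding $\|f\|_{B^{s-n/p}_{\infty,p}}\leq C(1-s)^{1/p}\|f\|_{W^{s,p}}$ from \cite{DominguezTikhonov}, followed by a simple truncation and an elementary geometric argument bounding $\|f\|_{L^\infty}$ by $\omega(f,|\text{supp }f|^{1/n})_\infty$; your proposed weighted 1D fractional Hardy route is plausible but left unverified, which is the weakest point of that part.

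Where there is a genuine gap is \eqref{ThmPam2}. You propose a Maz'ya--Shaposhnikova-type estimate in the anisotropic Besov framework and then want to ``specialize to the extremal star body'' to identify the two sides with $|\Pi^{*,s}_p f|^{-sp/n}$ and $|\Pi^*_p f|^{-p/n}$. This step does not go through: with $K=\Pi^{*,s}_p f$ the left-hand side becomes a bounded constant (by Proposition \ref{Prop348}), while the anisotropic $W^{1,p}_K$-norm with $K=\Pi^{*,s}_p f$ equals $\tfrac{n}{2}\tilde V_{-p}(\Pi^{*,s}_p f,\Pi^*_p f)$ (not a power of $|\Pi^*_p f|$); the dual mixed volume inequality only bounds this quantity \emph{below} by $|\Pi^{*,s}_p f|^{(n+p)/n}|\Pi^*_p f|^{-p/n}$, which is the wrong direction. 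The paper avoids this entirely by working \emph{direction by direction}: for each $\xi\in\S^{n-1}$ it estimates $\|\xi\|_{\Pi^{*,s}_p f}^{sp}=\int_0^\infty t^{-sp-1}\|\Delta_{t\xi}f\|_{L^p}^p\,dt$ from above by splitting at the direction-dependent threshold $t_0=2\|f\|_{L^p}/\|\nabla f\cdot\xi\|_{L^p}$, obtaining $\|\xi\|_{\Pi^{*,s}_pf}^{sp}\leq\tfrac{2^{p(1-s)}}{ps(1-s)}\|f\|_{L^p}^{p(1-s)}\|\xi\|_{\Pi^*_pf}^{sp}$, then integrates $\|\xi\|^{-n}$ over the sphere to compare volumes, bounds $\|f\|_{L^p}\leq|\text{supp }f|^{1/p}\|f\|_{L^\infty}$, and finally invokes the affine Morrey inequality \eqref{Intro9} to replace $\|f\|_{L^\infty}$ by $|\Pi^*_pf|^{-1/n}$. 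You would need to replace your ``extremal star body'' step with such a pointwise-on-$\S^{n-1}$ comparison of gauge functions to close the argument.
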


 It is worth mentioning that, unlike the case $s= n/p$, the full strength provided by Besov polar projection bodies is not now needed to achieve Theorem \ref{ThmIntro116}, which is formulated in terms of fractional polar projection bodies.  In particular, a key role in our argument is played by the following sharp form of the Morrey inequality in terms of Gagliardo norms: under the assumptions\footnote{We mention that \eqref{Intro121} with $s \to (n/p)+$ was already investigated in \cite{Triebel05}.} of \eqref{ThmPam1}, we have
\begin{equation}\label{Intro121}
	\|f\|_{L^\infty} \leq C   |\text{supp } f|^{\frac{s}{n}-\frac{1}{p}} (1-s)^{\frac{1}{p}} \|f\|_{W^{s, p}}.
\end{equation}
This inequality seems to be new and it is interesting by its own sake. Its proof is also contained in Section \ref{Section7}. Note that   \eqref{Intro8} follows immediately from \eqref{Intro121} via the classical BBM formula \eqref{Intro3}.

\vspace{5mm}

\begin{table}[H]
  \centering
\begin{tabular}{c|c|c}
  \eqref{Intro8} & $1 \leftarrow s$ & \eqref{Intro121}  \\
  $\|f\|_{L^\infty} \leq c |\text{supp f}|^{\frac{1}{n}-\frac{1}{p}} \|\nabla f\|_{L^p}$ & $\Leftarrow$ & $\|f\|_{L^\infty} \leq C   |\text{supp } f|^{\frac{s}{n}-\frac{1}{p}} (1-s)^{\frac{1}{p}} \|f\|_{W^{s, p}}$ \\\hline
  $\Uparrow$ & \ & $\Uparrow$ \\\hline
  \eqref{Intro9} & $1 \leftarrow s$  & Theorem \ref{ThmIntro116} \\
  $\|f\|_{L^\infty} \leq c |\text{supp f}|^{\frac{1}{n}-\frac{1}{p}} c_{n, p} |\Pi^*_p f|^{-1/n}$ & $\Leftarrow$ &  $\|f\|_{L^\infty} \leq C   |\text{supp } f|^{\frac{s}{n}-\frac{1}{p}} (1-s)^{\frac{1}{p}}  |\Pi^{*, s}_p f|^{-\frac{s}{n}}$
\end{tabular}
\vspace{2mm}

  \caption{Affine and fractional Morrey inequalities.}
  \label{Table3}
\end{table}

\subsubsection{\textbf{A few more words about our techniques}}

In addition to the concept of Besov polar projection bodies mentioned above, our approach relies on extrapolation estimates (cf. Theorem \ref{ThmEmbBesov1}). These estimates are based on a careful analysis of decomposition methods using wavelets and interpolation tools ($K$-functionals); see Section \ref{Section4}.
Moreover,  we also establish new  Poincar\'e inequalities (cf. Theorem \ref{ThmOptim2}) that improve the recent results by Haddad, Jim\'enez, and Montenegro \cite{HaddadJimenezMontenegro}.


\section{Background}
\subsection{Symmetrization}\label{Symm}

Let $E \subset \R^n$ be a Borel set of finite measure. The \emph{Schwarz symmetral} $E^\star$ of $E$ is the closed centered Euclidean ball with $|E^\star| = |E|$.

Let $f: \R^n \to [0, \infty)$ be a measurable function with corresponding level sets $\{f \geq t\}$ of finite measure for every $t > 0$. Note that $f$ can be recovered from $\{f \geq t\}$ via the so-called \emph{layer-cake formula}:
$$
	f(x) = \int_0^\infty \mathbf{1}_{\{f \geq t\}}(x) \, d t,
$$
where $\mathbf{1}_E$ denotes the indicator function of the measurable set $E$. This representation can be applied to introduce the concept of Schwarz symmetral: Let $f : \R^n \to \R$ be a measurable function such that $\{|f| \geq t\}$ has finite measure for every $t > 0$. Then  $f^\star$, the \emph{Schwarz symmetral} of $f$, can be introduced as
$$
	f^\star(x) =  \int_0^\infty \mathbf{1}_{\{|f| \geq t\}^\star} (x) \, dt, \qquad x \in \R^n.
$$
Note that $f^\star$ is a radially decreasing functions with $|\{|f| \geq t\}| = |\{f^\star \geq t\}|$. The function $f^\star$ is also commonly known as spherically symmetric rearrangement.

\subsection{Star bodies}
Let $K$ be a star-shaped (with respect to the origin) set in $\R^n$ with corresponding gauge function defined by
\begin{equation*}
	\|x\|_K = \inf \{\lambda > 0 : x \in \lambda K\}, \qquad x \in \R^n.
\end{equation*}
It is plain to see that $|K|$, the $n$-dimensional Lebesgue measure of $K$, can be expressed as
\begin{equation}\label{Volume}
	|K| = \frac{1}{n} \int_{\S^{n-1}} \rho_K(\xi)^n \, d \xi,
\end{equation}
where\footnote{We assume that $\rho_K$ is a measurable function.} $\rho_K$ is defined by
$$
	\rho_K(x) = \|x\|_K^{-1} = \sup \{\lambda \geq 0 : \lambda x \in K\}, \qquad x \in \R^n \backslash \{0\}.
$$
The star-shaped set $K$ is said to be a \emph{star body} if $\rho_K$ is a strictly positive and continuous function on $\R^n \backslash \{0\}$.

Let $\alpha \in \R \backslash \{0, n\}$. For star-shaped sets $K$ and $L$, the \emph{dual mixed volume} is defined by
$$
	\tilde{V}_\alpha (K, L) = \frac{1}{n} \int_{\S^{n-1}} \rho_K(\xi)^{n-\alpha} \rho_L(\xi)^\alpha \, d \xi.
$$

\subsection{$L^p$ and fractional polar projection bodies}\label{SectionPrelimPolar}
The concept of $L^p$ \emph{polar projection body} associated to $f \in W^1_p, \, p \geq 1,$ goes back to the work of Lutwak, Yang,  and Zhang \cite{LutwakYangZhang} and it is defined as the star body with gauge function given by
\begin{equation*}
	\|\xi\|_{\Pi^*_p f}^p = \int_{\R^n} |\nabla f (x) \cdot \xi|^p \, dx,  \qquad \xi \in \R^n.
\end{equation*}
In particular, $\Pi^*_p f$ the polar body of a convex body.

Fractional counterparts of $\Pi^*_p f$ have been recently introduced by Haddad and Ludwig \cite{HaddadLudwig}. More precisely, given $f \in W^{s, p}, s \in (0, 1), p \in [1, \infty)$,  its \emph{fractional polar projection body}  $\Pi^{*, s}_p f$ is defined as the star-shaped set with corresponding gauge function:
	\begin{equation}\label{FPPB}
		\|\xi\|_{\Pi^{*, s}_p f}^{s p} =  \int_0^\infty t^{-s p} \int_{\R^n} |f(x + t \xi)-f(x)|^p \, dx  \, \frac{dt}{t}, \qquad \xi \in \R^n.
	\end{equation}
	We refer to \cite[Proposition 4]{HaddadLudwig} for some basic properties of $\Pi^{*, s}_p f$.

\section{Anisotropic moduli of smoothness and Besov polar projection bodies}\label{Section3}

\subsection{Anisotropic moduli of smoothness: definition and basic properties}\label{SectionMod}

\begin{defn}\label{DefModSec}
Let $K$ be a star body in $\R^n$. The \emph{anisotropic moduli of smoothness} of  $f \in L^p, 1 \leq p \leq \infty,$ is given by
\begin{equation}\label{DefMod}
	\omega_K (f, t)_p =\bigg(\frac{1}{t^n |K|} \int_{\|h\|_K < t} \|\Delta_h f\|^p_{L^p} \, d h \bigg)^{\frac{1}{p}}, \qquad t > 0,
\end{equation}
(with the usual interpretation if $p=\infty$).
Here, $\Delta_h$ is the difference operator, i.e., $\Delta_h f (x) = f(x+h)-f(x)$.
\end{defn}

\begin{rem}\label{Rem37}
(i) Note that $\omega_K(f, t)_p \leq 2 \|f\|_{L^p}$ for every $f \in L^p$ and $t > 0$.

(ii) Let $K = B^n$, the unit ball in $\R^n$. Then $\omega_K (f, t)_p = \omega(f, t)_p$, the classical (averaged)  moduli of smoothness (see e.g. \cite{DeVoreLorentz}):
	\begin{equation}\label{CMod}
		\omega(f, t)_p = \bigg(\frac{1}{t^n \omega_n} \int_{|h| < t} \|\Delta_h f\|^p_{L^p} \, d h \bigg)^{\frac{1}{p}},
	\end{equation}
	where $|h|$ is the Euclidean norm of $h$.
\end{rem}

Next we study some basic properties of $\omega_K(f, t)_p$. In particular, we show that the $L^p$-average over $\|h\|_K < t$ in \eqref{DefMod} can be replaced  by any $L^q$-average, more precisely, we obtain the following result.

\begin{prop}\label{LemmaTechnical}
Given $p \in [1, \infty)$ and $q, r \in [1, \infty]$, let
$$
	g_q(t) = \bigg(\frac{1}{t^n |K|} \int_{\|h\|_K < t} \|\Delta_h f\|^q_{L^p} \, d h \bigg)^{\frac{1}{q}}.
$$
Then
	\begin{equation}\label{Lem1}
	g_r(t)	 \leq 4 (2^n + 1)^{\frac{1}{q}} g_q(t).
	\end{equation}
	Furthermore, the constant in \eqref{Lem1} can be improved to
	\begin{equation}\label{Lem2}
		g_r(t) \leq g_q(t)
	\end{equation}
	 if, additionally, $r \leq q$. In particular
	 \begin{equation}\label{EquivMod}
	 \frac{1}{4 (2^n + 1)^{\frac{1}{p}}} \, \sup_{\|h\|_K < t} \|\Delta_h f\|_{L^p}	\leq \omega_K(f, t)_p \leq \sup_{\|h\|_K < t} \|\Delta_h f\|_{L^p}.
	 \end{equation}
\end{prop}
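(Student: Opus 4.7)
The monotonicity inequality \eqref{Lem2} is standard: for $r \leq q$ one applies Jensen's inequality to the concave map $x \mapsto x^{r/q}$ under the probability measure $dh/(t^n|K|)$ on $\{\|h\|_K < t\}$, giving $g_r(t) \leq g_q(t)$ directly. In particular, the right-hand inequality in \eqref{EquivMod} is immediate. For the main bound \eqref{Lem1}, the same Jensen argument reduces matters to the endpoint $r = \infty$, so it suffices to show
\[
  \esssup_{\|h\|_K < t} \|\Delta_h f\|_{L^p} \;\leq\; 4(2^n+1)^{1/q}\, g_q(t).
\]
The structural input is that $\phi(h) := \|\Delta_h f\|_{L^p}$ is subadditive and even: from $\Delta_{h_1 + h_2} f = \Delta_{h_1} f + \Delta_{h_2} f(\cdot + h_1)$ and the translation invariance of $\|\cdot\|_{L^p}$,
\[
  \phi(h_1 + h_2) \leq \phi(h_1) + \phi(h_2), \qquad \phi(-h) = \phi(h),
\]
and in particular the scaling estimate $\phi(2v) \leq 2\phi(v)$.

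The plan is to fix $h$ with $\|h\|_K < t$ and, for each $h' \in tK$, use $\phi(h) \leq \phi(h-h') + \phi(h')$. Raising to the $q$th power via $(a+b)^q \leq 2^{q-1}(a^q+b^q)$ and integrating over $h' \in tK$ yields
\[
  |tK|\, \phi(h)^q \leq 2^{q-1}\!\left[\int_{tK} \phi(h-h')^q\, dh' + \int_{tK} \phi(h')^q\, dh'\right].
\]
The second integral is exactly $|tK|\, g_q(t)^q$. The change of variables $u = h - h'$ converts the first into $\int_{h-tK} \phi(u)^q\, du$, which I would split into the integral over $(h-tK) \cap tK$ (bounded directly by $|tK|\, g_q(t)^q$) plus the integral over the excess set $(h - tK) \setminus tK$. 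On the latter, the scaling estimate gives $\phi(u)^q \leq 2^q \phi(u/2)^q$; a further change of variables $v = u/2$ (with Jacobian $2^n$) then relocates this integral into a set contained in $tK$, at the overall cost of a factor $2^{n+q}$. Summing the contributions and simplifying, one arrives at
\[
  \phi(h)^q \leq 2^{q-1}\bigl(2^{n+q} + 1\bigr)\, g_q(t)^q \leq \bigl(4(2^n+1)^{1/q}\bigr)^q g_q(t)^q.
\]
The left inequality in \eqref{EquivMod} is the special case $q = p$, and the general case $r \leq \infty$ of \eqref{Lem1} now follows by combining this sup-bound with \eqref{Lem2}.

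The main obstacle will be to make the geometric step (the relocation of the integral from $(h-tK) \setminus tK$ back into $tK$) rigorous for an arbitrary star body $K$, which need be neither convex nor centrally symmetric. For such $K$, the gauge $\|\cdot\|_K$ satisfies only the positive homogeneity $\|\lambda h\|_K = \lambda\|h\|_K$ and no quasi-triangle inequality, so the inclusion $h - tK \subset 2tK$ cannot be used freely. The argument must lean only on this homogeneity together with the subadditivity of $\phi$, and the dimensional factor $2^n$ in the final constant is precisely the price paid to relate integrals over $(t/2)K$-dilates to integrals over $tK$ via $|2tK|/|tK| = 2^n$; the additional summand $+1$ reflects the direct bound on the $(h-tK) \cap tK$ piece.
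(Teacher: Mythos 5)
Your argument coincides with the paper's in all essentials: both rest on the subadditivity of $\phi(h) := \|\Delta_h f\|_{L^p}$, the halving estimate $\phi(2v) \le 2\phi(v)$, averaging over the translation variable in $tK$, and the Jacobian factor $2^n$ from the dilation by $2$. The only real difference is bookkeeping: the paper halves the term $\phi(\xi - h) \le 2\phi\big(\tfrac{\xi-h}{2}\big)$ up front and then enlarges the domain of integration from $\{\|\xi\|_K < t\}$ to $\{\|\tfrac{\xi - h}{2}\|_K < t\}$, whereas you split off $(h - tK)\cap tK$ and apply the halving only on the excess set; your intermediate constant $2^{q-1}(2^{n+q}+1)$ should actually read $2^{q-1}(2^{n+q}+2)$ once the $(h-tK)\cap tK$ piece is counted, but this is harmless and both bounds sit below $4^q(2^n+1)$. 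The obstacle you flag at the end is genuine, but it is shared verbatim by the paper: the enlargement $\{\|\xi\|_K < t\} \subset \{\|\tfrac{\xi - h}{2}\|_K < t\}$ (equivalently, in your version, the inclusion $\tfrac12\big[(h - tK)\setminus tK\big] \subset tK$) amounts to requiring $K - K \subset 2K$, which holds when $K$ is convex and centrally symmetric but fails for general star bodies — for a centrally symmetric non-convex $K$ one has $K - K = K + K \supsetneq 2K$, and for a non-symmetric convex $K$ the difference body $K - K$ again overshoots $2K$. Both proofs tacitly use this containment without stating it; it cannot be derived from positive homogeneity of $\|\cdot\|_K$ and subadditivity of $\phi$ alone, so the proposition as written is really a statement about convex symmetric $K$ (or about $K$ with $K - K \subset 2K$), and the constant would degrade for more general star bodies.
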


\begin{rem}
	The equivalence \eqref{EquivMod} is well known in the classical setting $K = B^n$  (cf. \eqref{CMod}), that is\footnote{Given two non-negative quantities $A$ and $B$, by $A \lesssim B$ we mean that there exists a constant $C$, independent of all essential parameters, such that $A \leq C B$. We write $A \approx B$ if $A \lesssim B \lesssim A$.},
	$$
		\omega(f, t)_p \approx \sup_{|h| < t} \|\Delta_h f\|_{L^p}.
	$$
\end{rem}

\begin{proof}[Proof of Proposition \ref{LemmaTechnical}]
For any $h, x, \xi \in \R^n$,
	$$
		\Delta_h f (x) = - (\Delta_{\xi-h} f(x+h) - \Delta_\xi f (x)),
	$$
	which yields (using also that $\|\Delta_{\xi-h} f\|_{L^p} \leq 2 \|\Delta_{\frac{\xi-h}{2}} f\|_{L^p}$)
	$$
		\|\Delta_h f\|^q_{L^p} \leq 4^{ q}  ( \|\Delta_{\frac{\xi-h}{2}} f \|_{L^p}^q + \|\Delta_\xi f\|^q_{L^p}).
	$$
	Assume $\|h\|_K < t$. Then integrating the last inequality over all $\|\xi\|_K < t$, we get
	\begin{align*}
		t^n |K| \|\Delta_h f\|^q_{L^p} &\leq 4^q \bigg(  \int_{\|\xi\|_K < t}   \|\Delta_{\frac{\xi-h}{2}} f \|^q_{L^p} \, d \xi  +  \int_{\|\xi\|_K < t}   \|\Delta_{\xi} f \|^q_{L^p} \, d \xi \bigg) \\
		&\leq 4^q \bigg(  \int_{\|\frac{\xi-h}{2}\|_K < t}   \|\Delta_{\frac{\xi-h}{2}} f \|^q_{L^p} \, d \xi  +  \int_{\|\xi\|_K < t}   \|\Delta_{\xi} f \|^q_{L^p} \, d \xi \bigg) \\
		& = 4^q (2^n + 1) \int_{\|\xi\|_K < t}   \|\Delta_{\xi} f \|^q_{L^p} \, d \xi.
	\end{align*}
	Hence
	$$
		\|\Delta_h f\|_{L^p} \leq  4 (2^n + 1)^{1/q} \bigg( \frac{1}{|K| t^n} \int_{\|\xi\|_K < t}   \|\Delta_{\xi} f \|^q_{L^p} \, d \xi \bigg)^{1/q}
	$$
	and integrating over all $\|h\|_K < t$, we arrive at
	$$
		\frac{1}{|K| t^n}\int_{\|h\|_K < t} \|\Delta_h f\|_{L^p}^r \, dh\leq 4^r (2^n + 1)^{r/q}  \bigg( \frac{1}{|K| t^n} \int_{\|\xi\|_K < t}   \|\Delta_{\xi} f \|^q_{L^p} \, d \xi \bigg)^{r/q}.
	$$
	This completes the proof of \eqref{Lem1}.
	
		On the other hand, the proof of \eqref{Lem2} is an application of H\"older's inequality:
	\begin{align*}
		 \int_{\|h\|_K < t} \|\Delta_h f\|^r_{L^p} \, d h  & \leq \bigg(\int_{\|h\|_K < t} \|\Delta_h f\|_{L^p(\R^n)}^q \, dh \bigg)^{\frac{r}{q}} \bigg( \int_{\|h\|_K < t} \, dh \bigg)^{1-\frac{r}{q}}  \\
		 & =|K| t^n  \bigg(\frac{1}{|K| t^n} \int_{\|h\|_K < t} \|\Delta_h f\|_{L^p}^q \, dh \bigg)^{\frac{r}{q}}.
	\end{align*}
\end{proof}

\begin{prop}
	\begin{enumerate}
	\item[\rm(i)] $\omega_K(f, t)_p$ is an almost increasing function of $t$, that is,  $\omega_K(f, t)_p \lesssim \omega_K(f, u)_p$ for every $t < u$.
	\item[\rm(ii)]  $\frac{\omega_K(f, t)_p}{t}$ is an almost decreasing function of $t$, that is,  $\frac{\omega_K(f, u)_p}{u} \lesssim  \frac{\omega_K(f, t)_p}{t}$ for every $t < u$.
	\end{enumerate}
\end{prop}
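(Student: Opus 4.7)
The plan is to derive both statements as short consequences of the two-sided equivalence \eqref{EquivMod} established in Proposition~\ref{LemmaTechnical}. That equivalence essentially reduces the averaged modulus $\omega_K(f,t)_p$ to the pointwise-in-$t$ supremum
\[
M(t) := \sup_{\|h\|_K < t} \|\Delta_h f\|_{L^p}
\]
up to a multiplicative constant depending only on $n$ and $p$.

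For part (i), the function $t \mapsto M(t)$ is literally monotonically increasing, because the set $\{h : \|h\|_K < t\}$ grows with $t$. Combining this trivial monotonicity of $M$ with the two-sided bound \eqref{EquivMod} gives, for any $t<u$,
\[
\omega_K(f,t)_p \;\le\; M(t) \;\le\; M(u) \;\le\; 4(2^n+1)^{1/p}\,\omega_K(f,u)_p,
\]
which is exactly the almost increasing property.

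For part (ii), I will invoke the standard subdivision trick for first-order differences. Given $t < u$, set $m := \lceil u/t \rceil$, so that $u/m \le t$ and $m \le 2u/t$ (since $u \ge t$). For any $h$ with $\|h\|_K < u$, write $h' := h/m$; then $\|h'\|_K = \|h\|_K/m < u/m \le t$. The telescoping identity
\[
\Delta_h f(x) \;=\; f(x+mh') - f(x) \;=\; \sum_{j=0}^{m-1} \Delta_{h'} f(x+jh'),
\]
together with translation invariance of $\|\cdot\|_{L^p}$, yields $\|\Delta_h f\|_{L^p} \le m \|\Delta_{h'} f\|_{L^p} \le m M(t)$. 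Taking the supremum over $\|h\|_K < u$ gives $M(u) \le m M(t)$; feeding this into \eqref{EquivMod} on both ends and using $m \le 2u/t$ produces
\[
\omega_K(f,u)_p \;\le\; 4(2^n+1)^{1/p}\, m\, \omega_K(f,t)_p \;\lesssim\; \frac{u}{t}\,\omega_K(f,t)_p,
\]
which is the almost decreasing property of $\omega_K(f,t)_p/t$.

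The argument is essentially routine given \eqref{EquivMod}, and no serious obstacle arises. The only care point is choosing $m$ so that the rescaled increment $h/m$ satisfies $\|h/m\|_K < t$ strictly; taking the ceiling $\lceil u/t \rceil$ delivers this at once while keeping $m$ comparable to $u/t$, which is exactly what is needed to get the sharp linear dependence on $u/t$ in part (ii).
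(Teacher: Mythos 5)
Your proof is correct and follows essentially the same route as the paper: part (i) falls out of the two-sided equivalence \eqref{EquivMod}, and part (ii) uses a telescoping/subdivision argument to get the multiplicative bound. The only (harmless) difference is that the paper applies the telescoping directly to the averaged modulus via a change of variables, obtaining $\omega_K(f,Nt)_p\le N\,\omega_K(f,t)_p$ with no extra constant, whereas you apply it to $M(t)=\sup_{\|h\|_K<t}\|\Delta_h f\|_{L^p}$ and convert back through \eqref{EquivMod}, picking up an additional dimensional factor that is irrelevant for the "almost decreasing" conclusion.
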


\begin{proof}
	Property (i) is an immediate consequence of \eqref{EquivMod}. On the other hand, we claim that
	\begin{equation}\label{MonMod1}
		\omega_K(f, \lambda t)_p \lesssim (1 + \lambda) \, \omega_K(f, t)_p
	\end{equation}
	for every $\lambda, t > 0$. Observe that (ii) follows from \eqref{MonMod1}. Since
	$$
		\Delta_{N h} f (x) = \sum_{k=0}^{N-1} \Delta_h f (x + k h), \qquad N \in \N,
	$$
	we have $\|\Delta_{N h} f\|_{L^p} \leq N \|\Delta_h f \|_{L^p}$. Then, by a simple change of variables,
	\begin{align}
	 \omega_K(f, N t)_p &= \bigg(\frac{1}{ t^n |K|} \int_{\|h\|_K < t} \|\Delta_{N h} f\|^p_{L^p} \, d h \bigg)^{\frac{1}{p}} \nonumber \\
	 & \leq N  \bigg(\frac{1}{ t^n |K|} \int_{\|h\|_K < t} \|\Delta_{h} f\|^p_{L^p} \, d h \bigg)^{\frac{1}{p}} = N \omega_K(f, t)_p.  \label{AuxEs}
	\end{align}
	As a consequence, using also property (i),
	$$
		\omega_K(f, \lambda t)_p \lesssim \omega_K(f, (1 + [\lambda]) t )_p \leq (1 + [\lambda ]) \, \omega_K(f, t)_p \leq (1 + \lambda) \, \omega_K(f, t)_p,
	$$
	i.e., \eqref{MonMod1} holds.
\end{proof}

In the isotropic setting given by $K = B^n$,  $\omega(f, t)_p$ (cf. \eqref{CMod}) allows to measure smoothness properties of $f$. Indeed,  the Hardy--Littlewood theorem asserts that, for $p \in (1, \infty)$,
\begin{equation}\label{HLEquiv}
	\omega(f, t)_p \lesssim t \iff \int_{\R^n} |\nabla f (x)|^p \, dx < \infty,
\end{equation}
cf. \cite[Theorem 9.3, p. 53]{DeVoreLorentz} or \cite[Proposition 3, p. 139]{Stein}. Our next result extends \eqref{HLEquiv} to general convex bodies. At a first sight, one may conjecture that $\omega_K(f, t)_p$ characterizes the anisotropic Sobolev space (see e.g. \cite{Cordero, Figalli})
$$
	\|f\|_{W^{1, p}_K} = \bigg(\int_{\R^n} \|\nabla f (x)\|^p_{K^*} \, dx \bigg)^{\frac{1}{p}},
$$
where $K^* = \{y \in \R^n: y \cdot x \leq 1 \text{ for all } x \in K\}$, the polar body of $K$. However, we show that this is not true in general, but the correct answer involves instead variants of $W^{1, p}_K$ as introduced by Ludwig \cite{Ludwigb, Ludwig}. More precisely, anisotropic Sobolev spaces based on the norm with unit ball $Z_p^* K$, the polar $L^p$ moment   body of $K$. Recall that, for a convex body $K$, the polar $L^p$ moment body of $K$ is the unit ball of the norm defined by
$$
	\|x\|^p_{Z_p^* K} = \frac{n + p}{2} \, \int_K |x \cdot y|^p \, dy, \qquad x \in \R^n.
$$

\begin{prop}[Anisotropic Hardy--Littlewood theorem]\label{Prop3.13}
	Let $p \in [1, \infty)$ and $f \in W^{2, p}(\R^n)$ with compact support. Then
	$$
		\omega_K(f, t)_p \lesssim t \iff \int_{\R^n} \|\nabla f (x)\|^p_{Z_p^* K} \, d x < \infty.
	$$
	Furthermore
	$$
		\lim_{t \to 0}  \frac{\omega_K(f, t)_p}{t} =  \sup_{t > 0} \frac{\omega_K(f, t)_p}{t} \approx \bigg( \int_{\R^n} \|\nabla f (x)\|^p_{Z_p^* K} \, d x \bigg)^{1/p}.
	$$
\end{prop}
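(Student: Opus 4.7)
The plan is to reduce both the equivalence and the asymptotic identity to a single scaled expression obtained from \eqref{DefMod} by the substitution $h = t h'$:
\begin{equation*}
\frac{\omega_K(f, t)_p^p}{t^p} = \frac{1}{|K|} \int_{\|h'\|_K < 1} \frac{\|\Delta_{t h'} f\|_{L^p}^p}{t^p}\, dh'.
\end{equation*}
After this rescaling, the variable $h'$ ranges over a fixed bounded set and the entire $t$-dependence is confined to the directional difference quotient $\|\Delta_{t h'} f\|_{L^p}/t$. I will show that this quotient admits both a uniform upper bound by $\|h' \cdot \nabla f\|_{L^p}$ and a pointwise limit equal to the same quantity; the definition of $Z_p^* K$ then collapses the resulting integrals into a multiple of $\int_{\R^n} \|\nabla f(x)\|_{Z_p^* K}^p\, dx$.

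For the upper bound I would use the representation $f(x+h)-f(x) = \int_0^1 h \cdot \nabla f(x+\tau h)\, d\tau$, Minkowski's integral inequality, and translation invariance of $\|\cdot\|_{L^p}$ to obtain $\|\Delta_h f\|_{L^p} \le \|h \cdot \nabla f\|_{L^p}$ for every $h$. Inserting this pointwise estimate into the rescaled identity, applying Fubini, and using $\int_K |h' \cdot \nabla f(x)|^p\, dh' = \frac{2}{n+p}\|\nabla f(x)\|_{Z_p^* K}^p$ yields, for every $t > 0$,
\begin{equation*}
\frac{\omega_K(f,t)_p^p}{t^p} \le \frac{2}{(n+p)|K|} \int_{\R^n} \|\nabla f(x)\|_{Z_p^* K}^p\, dx.
\end{equation*}
This already secures the ($\Leftarrow$) implication of the equivalence and the upper estimate on $\sup_{t>0}\omega_K(f,t)_p/t$.

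For the matching lower bound I would apply dominated convergence to the rescaled identity. The pointwise convergence $\|\Delta_{t h'}f\|_{L^p}/t \to \|h' \cdot \nabla f\|_{L^p}$ as $t\to 0^+$ is the classical $L^p$-convergence of difference quotients to directional derivatives for $f \in W^{1,p}$, and the very same Minkowski bound provides a uniform dominating function $\|h' \cdot \nabla f\|_{L^p} \lesssim \|\nabla f\|_{L^p}$, which is integrable over the bounded region $\{\|h'\|_K < 1\}$. Passing to the limit and again invoking Fubini together with the definition of $Z_p^* K$ produces exactly the same value as the upper bound, so
\begin{equation*}
\sup_{t>0}\frac{\omega_K(f,t)_p}{t} = \lim_{t \to 0^+}\frac{\omega_K(f,t)_p}{t} = \bigg(\frac{2}{(n+p)|K|}\bigg)^{1/p}\bigg(\int_{\R^n}\|\nabla f(x)\|_{Z_p^* K}^p\, dx\bigg)^{1/p}.
\end{equation*}
The ($\Rightarrow$) direction of the equivalence then follows from $\sup \ge \lim$: if $\omega_K(f,t)_p \lesssim t$ the limit is finite, hence so is the integral. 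The only delicate step is the dominated-convergence argument; once the pointwise convergence and the uniform $L^p$ bound on difference quotients are justified from $f \in W^{2,p}$ with compact support (the extra Sobolev regularity is convenient but $W^{1,p}$ would actually suffice), everything else reduces to Fubini and unwinding the definition of $Z_p^* K$.
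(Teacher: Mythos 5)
Your argument is correct, and it diverges from the paper's proof at the lower bound and at the identity $\lim = \sup$. The paper proves the lower bound by comparing $\|\nabla f \cdot h\|_{L^p}$ with $\|\Delta_h f\|_{L^p}$ through the triangle inequality and controlling the error $\|\Delta_h f - \nabla f \cdot h\|_{L^p}$ via the Lipschitz constant of $\nabla f$ (which is exactly where $f \in W^{2,p}$ with compact support enters), then establishes $\lim_{t\to 0} = \sup_{t>0}$ through a separate subadditivity argument based on $\omega_K(f, Nt)_p \leq N\,\omega_K(f,t)_p$ and a $\liminf$ selection scheme. You instead pass directly to the limit inside the rescaled integral by dominated convergence, using the classical $L^p$-convergence of difference quotients and the Minkowski bound $\|\Delta_{th'}f\|_{L^p}/t \leq \|h'\cdot\nabla f\|_{L^p}$ as the dominating function; since this same bound proves $\omega_K(f,t)_p^p/t^p \leq \frac{2}{(n+p)|K|}\int\|\nabla f\|_{Z_p^*K}^p$ for every $t$, the sup and the limit are automatically pinned to the same explicit constant, and no monotonicity argument is needed. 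What this buys you: your proof gives the sharp constant in place of the stated $\approx$, and, as you observe, it goes through under the weaker hypothesis $f \in W^{1,p}$ without compact support, since neither the Lipschitz estimate on $\nabla f$ nor the finiteness of $|\Omega_f|$ is used. The one minor point worth stating explicitly is that integrability of the dominating function $h' \mapsto \|h'\cdot\nabla f\|_{L^p}^p$ over $\{\|h'\|_K<1\}$ (equivalently, finiteness of $\int\|\nabla f\|_{Z_p^*K}^p$) is automatic for $f \in W^{1,p}$ by equivalence of norms on $\R^n$, so the dominated convergence hypothesis is met unconditionally.
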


\begin{proof}
Recall that, for each $ h \in \R^n$ and for almost every $x \in \R^n$,
$$
	f(x+h)-f(x) = \int_0^1 \nabla f(x + \lambda h) \cdot h \, d \lambda.
$$
Then, by Minkowski's inequality and a change of variables,
\begin{equation}
	\|\Delta_h f\|_{L^p} \leq \int_0^1 \bigg( \int_{\R^n} |\nabla f(x + \lambda h) \cdot h|^p \, dx  \bigg)^{1/p} \, d\lambda  =  \bigg(\int_{\R^n} |\nabla f(x) \cdot h|^p \, d x \bigg)^{1/p}. \label{314b2}
\end{equation}
Using Fubini's theorem and another change of variables yield
\begin{align*}
	 \int_{\|h\|_K < t} \|\Delta_h f\|^p_{L^p} \, d h & \leq \int_{\|h\|_K < t} \int_{\R^n} |\nabla f(x) \cdot h|^p \, d x  \, dh \\
	 & = t^p \int_{\R^n} \int_{\|h\|_K < t} \Big|\nabla f(x) \cdot \frac{h}{t} \Big|^p \, dh \, dx \\
	 & = t^{p+n} \int_{\R^n} \int_{\|h\|_K < 1} |\nabla f (x) \cdot h|^p \, dh \, dx.
\end{align*}
Accordingly (note that $h \in K \iff \|h\|_K < 1$)
$$
	\frac{1}{t^n}  \int_{\|h\|_K < t} \|\Delta_h f\|^p_{L^p} \, d h \leq \frac{2}{n+p} \,  t^p \int_{\R^n} \|\nabla f(x)\|^p_{Z^*_p K} \, dx.
$$
As a consequence
\begin{equation}\label{Estim1}
	\omega_K(f, t)_p \lesssim t \bigg(  \int_{\R^n} \|\nabla f(x)\|^p_{Z^*_p K} \, dx \bigg)^{1/p}.
\end{equation}

Next we prove the converse inequality. Assume that $f \in W^{2, p}$ with compact support.  Let $h \in \R^n$ and $\Omega_f = \text{supp } f$. By the triangle inequality, we have
$$
	\|\nabla f \cdot h\|_{L^p(\Omega_f)} \leq \|\Delta_h f -\nabla f \cdot h\|_{L^p(\Omega_f)} + \|\Delta_h f\|_{L^p(\Omega_f)}.
$$
If we integrate this expression over all $\|h\|_K < t$, we get
\begin{equation}\label{11.1}
	\int_{\|h\|_K < t} \|\nabla f  \cdot h\|_{L^p(\Omega_f)}^p \, dh \lesssim \int_{\|h\|_K < t}  \|\Delta_h f -\nabla f \cdot h\|_{L^p(\Omega_f)}^p \, dh +  \int_{\|h\|_K < t}   \|\Delta_h f\|_{L^p(\Omega_f)}^p \, dh.
\end{equation}
Observe that
$$
	\int_{\|h\|_K < t} \|\nabla f  \cdot h\|_{L^p(\Omega_f)}^p \, dh = t^{p+n} \int_{K} \|\nabla f  \cdot h\|_{L^p(\Omega_f)}^p \, dh \approx t^{p+n} \int_{\Omega_f} \|\nabla f(x)\|^p_{Z_p^* K} \, dx
$$
and then \eqref{11.1} reads as
\begin{align}
	 \int_{\Omega_f} \|\nabla f(x)\|^p_{Z_p^* K} \, dx &\lesssim \frac{1}{t^{p+n}}  \int_{\|h\|_K < t}  \|\Delta_h f -\nabla f \cdot h\|_{L^p(\Omega_f)}^p \, dh + \frac{1}{t^{p+n}} \int_{\|h\|_K < t}   \|\Delta_h f\|_{L^p(\Omega_f)}^p \, dh \nonumber \\
	 & \lesssim \frac{1}{t^{p+n}}  \int_{\|h\|_K < t}  \|\Delta_h f -\nabla f \cdot h\|_{L^p(\Omega_f)}^p \, dh + \frac{\omega_K(f, t)_p^p}{t^p}. \label{11.2}
\end{align}

Next we show that
\begin{equation}\label{11.3n}
	\lim_{t \to 0}  \frac{1}{t^{p+n}}  \int_{\|h\|_K < t}  \|\Delta_h f -\nabla f \cdot h\|_{L^p(\Omega_f)}^p \, dh  = 0.
\end{equation}
Indeed, it is well known that there exists a positive constant $c$, which depends only on $K$ and $n$, such that $c |h| \leq \|h\|_K$ for all $h \in \R^n$. In particular
\begin{equation}\label{11.3}
	 \int_{\|h\|_K < t}  \|\Delta_h f -\nabla f \cdot h\|_{L^p(\Omega_f)}^p \, dh \leq  \int_{|h| < t/c}  \|\Delta_h f -\nabla f \cdot h\|_{L^p(\Omega_f)}^p \, dh.
\end{equation}
Furthermore, we make use of the basic fact that given $x, h \in \R^n$ we can write $f(x+h)-f(x) = \nabla f (\xi) \cdot h$ for some  $\xi_{x, h} \in \R^n$ between $x$ and $x+h$.  Hence, letting $A$  the Lipschitz constant of $\nabla f$,
\begin{align}
	\|\Delta_h f -\nabla f \cdot h\|_{L^p(\Omega_f)}^p &= \int_{\Omega_f} |(\nabla f(\xi_{x, h})  -\nabla f (x)) \cdot h|^p \, d x \nonumber \\
	& \leq |h|^p \int_{\Omega_f} |\nabla f(\xi_{x, h}) - \nabla f (x)|^p  \, dx \nonumber  \\
	& \leq A^p |h|^p \int_{\Omega_f} |\xi_{x, h} - x|^p \, dx  \leq A^p |h|^{2 p} |\Omega_f|. \label{LipEstim}
\end{align}
As a byproduct, by \eqref{11.3},
\begin{equation*}
	 \int_{\|h\|_K < t}  \|\Delta_h f -\nabla f \cdot h\|_{L^p(\Omega_f)}^p \, dh \leq A^p  |\Omega_f|  \int_{|h| < t/c}  |h|^{2 p}\, dh  = k_n  A^p |\Omega_f| \Big(\frac{t}{c} \Big)^{2 p + n},
\end{equation*}
which leads to
$$
	 \frac{1}{t^{p+n}}  \int_{\|h\|_K < t}  \|\Delta_h f -\nabla f \cdot h\|_{L^p(\Omega_f)}^p \, dh  \leq k_n  A^p |\Omega_f| c^{-2 p-n} t^p \to 0 \qquad \text{as} \qquad t \to 0.
$$
This proves the desired assertion \eqref{11.3n}.

Taking limits as $t \to 0$ in \eqref{11.2}, it follows from \eqref{11.3n} that
\begin{equation}\label{Estim2}
	 \int_{\Omega_f} \|\nabla f(x)\|^p_{Z_p^* K} \, dx  \lesssim \limsup_{t \to 0}  \frac{\omega_K(f, t)_p^p}{t^p} \leq \sup_{t > 0}   \frac{\omega_K(f, t)_p^p}{t^p}.
\end{equation}
Putting together \eqref{Estim1} and \eqref{Estim2}, we get
$$
	\sup_{t > 0} \frac{\omega_K(f, t)_p}{t} \approx \bigg( \int_{\Omega_f} \|\nabla f(x)\|^p_{Z_p^* K} \, dx  \bigg)^{\frac{1}{p}}.
$$

It remains to show that
\begin{equation}\label{ClaimN}
	\lim_{t \to 0}  \frac{\omega_K(f, t)_p}{t} =  \sup_{t > 0} \frac{\omega_K(f, t)_p}{t}.
\end{equation}
Let
$$
	L = \liminf_{t \to 0}  \frac{\omega_K(f, t)_p}{t}
$$
and choose a sequence of positive numbers $\{t_j\}_{j \in \N}$ such that $t_j \to 0$ as $j \to \infty$ and
\begin{equation}\label{AuxEs2}
	\lim_{j \to \infty} \frac{\omega_K(f, t_j)_p}{t_j} =  L.
\end{equation}
For any $t > 0$, we let $k_j \in \N$ be such that $k_j t_j \leq t < (k_j + 1) t_j$. Note that $\lim_{j \to \infty} k_j t_j = t$. Hence, taking into account the continuity of $\omega_K(f, t)$ as a function of $t$ and using \eqref{AuxEs} and \eqref{AuxEs2},  we have
$$
	  \frac{\omega_K(f, t)_p}{t} = \lim_{j \to \infty} \frac{\omega_K(f, k_j t_j)_p}{k_j t_j} \leq  \lim_{j \to \infty} \frac{\omega_K(f,  t_j)_p}{ t_j}  = L.
$$
This implies $\sup_{t > 0}  \frac{\omega_K(f, t)_p}{t} \leq L$ and the proof of \eqref{ClaimN} is complete.
\end{proof}

An important result in classical theory of function spaces claims that moduli of smoothness does not increase under  symmetrizations. Namely, for every $t >0$,
	\begin{equation}\label{PointEstimRea}
		\omega(f^\star, t)_p \leq \omega(f, t)_p,
	\end{equation}
	where $\omega(f, t)_p$ is given by \eqref{CMod}. See \cite{Kolyada} and the extensive list of  references given there. Next we extend this result from $K = B^n$ to the general anisotropic setting.


\begin{prop}\label{ThmPS}
Assume that  $f \in L^p, 1 \leq p < \infty$. Then, for each $t >0$,
	\begin{equation}\label{ForPS}
		\omega_{K^\star}(f^\star, t)_p \leq \omega_K(f, t)_p.
	\end{equation}
\end{prop}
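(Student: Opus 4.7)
My plan is to recast $t^n|K|\,\omega_K(f,t)_p^{\,p}$ as a symmetric double integral on $\R^n\times\R^n$ and then reduce \eqref{ForPS} to a rearrangement inequality of Riesz--Sobolev type. The substitution $y=x+h$ in the definition of $\omega_K$ gives the symmetric representation
\begin{equation*}
t^n|K|\,\omega_K(f,t)_p^{\,p}=\dint_{\R^n\times\R^n}\mathbf{1}_{tK}(y-x)\,|f(y)-f(x)|^p\,dx\,dy,
\end{equation*}
with an analogous formula for $(K^\star,f^\star)$. Since the symmetric decreasing rearrangement of $\mathbf{1}_{tK}$ is $\mathbf{1}_{(tK)^\star}=\mathbf{1}_{tK^\star}$ and $|K|=|K^\star|$, the proof of \eqref{ForPS} reduces to establishing the rearrangement inequality
\begin{equation}\label{PlanTargetRearr}
\dint J^\star(y-x)\,|f^\star(y)-f^\star(x)|^p\,dx\,dy\ \le\ \dint J(y-x)\,|f(y)-f(x)|^p\,dx\,dy
\end{equation}
for every nonnegative $J\in L^1(\R^n)$, applied to $J=\mathbf{1}_{tK}$.

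To prove \eqref{PlanTargetRearr} I would linearize $|f(y)-f(x)|^p$ in terms of indicators of the superlevel sets $A_s=\{f>s\}$. For $p=1$ this is direct from the layer cake $|f(y)-f(x)|=\int_0^\infty|\mathbf{1}_{A_s}(y)-\mathbf{1}_{A_s}(x)|\,ds$ combined with the expansion $|\mathbf{1}_{A_s}(y)-\mathbf{1}_{A_s}(x)|=\mathbf{1}_{A_s}(x)+\mathbf{1}_{A_s}(y)-2\mathbf{1}_{A_s}(x)\mathbf{1}_{A_s}(y)$. For $p>1$ I would combine the scalar identity $|u|^p=p(p-1)\int_0^\infty (|u|-v)^+\,v^{p-2}\,dv$ with the two-level formula
\begin{equation*}
\bigl(|f(y)-f(x)|-v\bigr)^+=\int_0^\infty \bigl[\mathbf{1}_{A_s^c}(x)\mathbf{1}_{A_{s+v}}(y)+\mathbf{1}_{A_s^c}(y)\mathbf{1}_{A_{s+v}}(x)\bigr]\,ds,
\end{equation*}
which one verifies directly by splitting into the cases $f(y)\gtrless f(x)$. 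Together these write $|f(y)-f(x)|^p$ as a nonnegative superposition, indexed by $(s,v)$, of products of indicators of the sets $A_s$ and $A_{s+v}$.

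Next, for each pair $(s,s+v)$ I would apply the classical three-function Riesz--Sobolev inequality to the triple $(\mathbf{1}_{A_s},\mathbf{1}_{A_{s+v}},\widetilde J)$, where $\widetilde J(u):=J(-u)$ so that $\widetilde J(x-y)=J(y-x)$ and $\widetilde J^\star=J^\star$. Using the standard identifications $\mathbf{1}_{A_s}^\star=\mathbf{1}_{\{f^\star>s\}}$, $|A_s|=|\{f^\star>s\}|$, and $\|J\|_1=\|J^\star\|_1$, and writing $\mathbf{1}_{A_s^c}=1-\mathbf{1}_{A_s}$ to separate off a constant-mass term, one obtains a pointwise-in-$(s,v)$ comparison. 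Integrating over $(s,v)$ via Fubini and recognizing the result as the same layer cake expression applied to $f^\star$ yields \eqref{PlanTargetRearr}, and hence \eqref{ForPS}.

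The hard part will be the $p>1$ bookkeeping: the natural linearization of $|f(y)-f(x)|^p$ involves indicators $\mathbf{1}_{A_s^c}$ of sets that can have infinite measure, to which Riesz--Sobolev does not apply directly. The remedy is to expand $\mathbf{1}_{A_s^c}=1-\mathbf{1}_{A_s}$ and observe that the constant-$1$ contribution produces a mass $|A_{s+v}|\,\|J\|_1$ that equals its symmetrized counterpart $|\{f^\star>s+v\}|\,\|J^\star\|_1$ and hence cancels between the two sides of \eqref{PlanTargetRearr}; what remains can then be compared term by term using Riesz--Sobolev on pairs of indicators of finite-measure sets. Once this bookkeeping is under control, the rest of the argument is a routine application of Fubini and the classical rearrangement inequality.
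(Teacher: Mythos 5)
Your proposal is correct in outline and uses essentially the same strategy as the paper: rewrite $t^n|K|\,\omega_K(f,t)_p^{\,p}$ as a bilinear form against $\mathbf{1}_{tK}(y-x)$, linearize $|f(x)-f(y)|^p$ by a layer--cake formula into a nonnegative superposition of products of level-set indicators, expand $\mathbf{1}_{A^c}=1-\mathbf{1}_{A}$ to isolate a constant-mass term that is symmetrization-invariant (thus sidestepping the infinite-measure obstruction), and apply the Riesz rearrangement inequality to the remaining finite-measure pieces before reassembling via Fubini. The difference is in the linearization: the paper writes $(f(x)-f(y))_+^p = p\int_0^\infty (f(x)-r)_+^{p-1}\mathbf{1}_{\{f<r\}}(y)\,dr$ and applies Riesz to the triple $\bigl((f-r)_+^{p-1},\mathbf{1}_{\{f\ge r\}},\mathbf{1}_{tK}\bigr)$, keeping one power factor non-indicator (noting that $(\cdot - r)_+^{p-1}$ is nondecreasing, so symmetrizes as expected), and then treats $(f(x)-f(y))_-^p$ separately; you fully linearize down to products of two indicators via the extra variable $v$ and the symmetric two-level formula, which handles both signs at once and lets you invoke Riesz only on indicators. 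Both routes work; yours trades a slightly heavier two-parameter layer-cake bookkeeping for a cleaner Riesz step.

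One small but genuine omission: your two-level formula $(|f(y)-f(x)|-v)^+=\int_0^\infty\bigl[\mathbf{1}_{A_s^c}(x)\mathbf{1}_{A_{s+v}}(y)+\mathbf{1}_{A_s^c}(y)\mathbf{1}_{A_{s+v}}(x)\bigr]\,ds$ with $A_s=\{f>s\}$ and $s$ ranging over $(0,\infty)$ is only valid for nonnegative $f$ (for instance, if $f(x)<0<f(y)$ the right-hand side computes $(f(y)-v)^+$, not $(f(y)-f(x)-v)^+$). You should first reduce to $f\ge 0$ by observing that $\bigl||f(x)|-|f(y)|\bigr|\le|f(x)-f(y)|$ implies $\omega_K(|f|,t)_p\le\omega_K(f,t)_p$, while $f^\star=|f|^\star$, so proving \eqref{ForPS} for $|f|$ suffices. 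This is exactly how the paper opens its proof. With that one-line reduction added, your argument goes through.
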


\begin{proof}
We first observe that it is enough to show the validity of \eqref{ForPS} for non-negative functions.  Indeed, the case of general functions $f$ can be reduced to the latter since $\omega_K(|f|, t)_p \leq \omega_K(f, t)_p$. Then we restrict our attention to  non-negative functions $f$.

	By Fubini's theorem, we have
	\begin{align}
		 \int_{\|h\|_K < t} \|\Delta_h f\|^p_{L^p} \, d h &=  \int_{\|h\|_K < t} \int_{\R^n} |f(x+h)-f(x)|^p \, dx \, dh \nonumber \\
		 & = \int_{\R^n}  \int_{\|x-y\|_K < t}  |f(y)-f(x)|^p \, d y \, dx  \nonumber\\
		 & = \int_{\R^n} \int_{\R^n}  |f(y)-f(x)|^p \mathbf{1}_{Z_t}(x-y) \, dy \, dx, \label{PS1}
	\end{align}
	where $Z_t = \{z \in \R^n : \|z\|_K < t\}$. Furthermore, we find that
	\begin{equation}\label{PS2}
		z \in Z_t \iff z \in t K.
	\end{equation}
	Indeed, it is clear that $z \in tK$ yields $\|z\|_K \leq t$. Conversely, if $\|z\|_K < t$ then there exists $\lambda < t$ such that $z \in \lambda K$, i.e., $z = \lambda k$ for some $k \in K$. In particular, $z = t \frac{z}{t} = t \frac{\lambda k}{t}$ and $\frac{\lambda k}{t} \in K$, since $\lambda/t \in (0, 1)$ and $k \in K$ (recall that $K$ is a star body).
	
	In view of \eqref{PS2}, one can rewrite \eqref{PS1} as
	\begin{equation}\label{PS3}
		 \int_{\|h\|_K < t} \|\Delta_h f\|^p_{L^p} \, d h  = \int_{\R^n} \int_{\R^n}  |f(x)-f(y)|^p \mathbf{1}_{t K}(x-y) \, dy \, dx.
	\end{equation}
	
	Taking into account that $f$ is non-negative, one can write\footnote{Recall that $f_+ = \max \{f, 0\}$ and $f_{-} =  -\min\{f, 0\}$.}
	$$
		(f(x)-f(y))_+^p = p \int_0^\infty (f(x)-r)_+^{p-1} \mathbf{1}_{\{f < r\}}(y) \, dr.
	$$
	The case $p=1$ in the previous formula should be adequately interpreted as $(f(x)-r)_+^{p-1} = \mathbf{1}_{\{f \geq r\}}(x)$. Then, by Fubini's theorem,
	\begin{align}
		\int_{\R^n} \int_{\R^n}  (f(x)-f(y))_+^p \mathbf{1}_{t K}(x-y) \, dy \, dx &= p  \int_0^\infty \int_{\R^n} \int_{\R^n} (f(x)-r)_+^{p-1} \mathbf{1}_{\{f < r\}}(y)  \mathbf{1}_{t K}(x-y) \, dy \, dx \, dr \nonumber \\
		&\hspace{-4cm}= p  \int_0^\infty \int_{\R^n} \int_{\R^n} (f(x)-r)_+^{p-1} (1- \mathbf{1}_{\{f \geq r\}}(y))  \mathbf{1}_{t K}(x-y) \, dy \, dx \, dr. \label{326new}
	\end{align}
	For every $r, t > 0$, we have
	\begin{align}
		\int_{\R^n} \int_{\R^n} (f(x)-r)_+^{p-1} (1- \mathbf{1}_{\{f \geq r\}}(y))  \mathbf{1}_{t K}(x-y) \, dy \, dx & \nonumber \\
		& \hspace{-7cm} = \int_{\R^n} (f(x)-r)_+^{p-1} \int_{\R^n}   \mathbf{1}_{t K}(x-y) \, dy \, dx  - \int_{\R^n} \int_{\R^n} (f(x)-r)_+^{p-1} \mathbf{1}_{\{f \geq r\}}(y)  \mathbf{1}_{t K}(x-y) \, dy \, dx \nonumber \\
		&  \hspace{-7cm} = t^n |K|  \int_{\R^n} (f(x)-r)_+^{p-1} \, dx  - \int_{\R^n} \int_{\R^n} (f(x)-r)_+^{p-1} \mathbf{1}_{\{f \geq r\}}(y)  \mathbf{1}_{t K}(x-y) \, dy \, dx, \label{326}
	\end{align}
	where the last step is justified by the fact that   $\int_{\R^n} (f(x)-r)_+^{p-1} \, dx < \infty$. The latter follows since $f \in L^p$ and therefore $\{f > r\}$ has finite measure. Moreover, applying  symmetrization properties, we can write
	$$|K|  \int_{\R^n} (f(x)-r)_+^{p-1} \, dx  =  |K^\star|  \int_{\R^n} (f^\star(x)-r)_+^{p-1} \, dx.$$
	
	On the other hand, the second term in \eqref{326} can be estimated using Riesz's rearrangement inequality. Indeed
	\begin{equation*}
		\int_{\R^n} \int_{\R^n} (f(x)-r)_+^{p-1} \mathbf{1}_{\{f \geq r\}}(y)  \mathbf{1}_{t K}(x-y) \, dy \, dx  \leq \int_{\R^n} \int_{\R^n} (f^\star(x)-r)_+^{p-1} \mathbf{1}_{\{f^\star \geq r\}} (y) \mathbf{1}_{t K^\star}(x-y) \, dy \, dx.
	\end{equation*}
	Accordingly
	\begin{align*}
		\int_{\R^n} \int_{\R^n} (f(x)-r)_+^{p-1} (1- \mathbf{1}_{\{f \geq r\}}(y))  \mathbf{1}_{t K}(x-y) \, dy \, dx \\
		& \hspace{-7cm} \geq t^n |K^\star|  \int_{\R^n} (f^\star(x)-r)_+^{p-1} \, dx  - \int_{\R^n} \int_{\R^n} (f^\star(x)-r)_+^{p-1} \mathbf{1}_{\{f^\star \geq r\}} (y) \mathbf{1}_{t K^\star}(x-y) \, dy \, dx \\
		& \hspace{-7cm} = \int_{\R^n} \int_{\R^n} (f^\star(x)-r)_+^{p-1} (1- \mathbf{1}_{\{f^\star \geq r\}}(y))  \mathbf{1}_{t K^\star}(x-y) \, dy \, dx,
	\end{align*}
	where the last step follows from \eqref{326} (after replacing $f$ and $K$ by $f^\star$ and $K^\star$, respectively). Therefore \eqref{326new} can be estimated as
	\begin{align}
		\int_{\R^n} \int_{\R^n}  (f(x)-f(y))_+^p \mathbf{1}_{t K}(x-y) \, dy \, dx &\geq p \int_0^\infty \int_{\R^n} \int_{\R^n} (f^\star(x)-r)_+^{p-1} (1- \mathbf{1}_{\{f^\star \geq r\}}(y))  \mathbf{1}_{t K^\star}(x-y) \, dy \, dx \, dr \nonumber \\
		& = \int_{\R^n} \int_{\R^n}  (f^\star(x)-f^\star(y))_+^p \mathbf{1}_{t K^\star}(x-y) \, dy \, dx, \label{328}
	\end{align}
	where the last step follows from \eqref{326new} (after replacing $f$ and $K$ by $f^\star$ and $K^\star$, respectively).

	On the other hand, since
	$$
		(f(x)-f(y))_-^p = p \int_0^\infty (r-f(y))_-^{p-1} \mathbf{1}_{\{f < r \}}(x) \, dr
	$$
	one can apply the above reasoning line by line in order to get (cf. \eqref{328})
	\begin{equation}\label{329}
	\int_{\R^n} \int_{\R^n}  (f(x)-f(y))_-^p \mathbf{1}_{t K}(x-y) \, dy \, dx \geq  \int_{\R^n} \int_{\R^n}  (f^\star(x)-f^\star(y))_-^p \mathbf{1}_{t K^\star}(x-y) \, dy \, dx.
	\end{equation}
	
	Taking into account that $|f(x)-f(y)|^p = (f(x)-f(y))^p_+ + (f(x)-f(y))^p_-$ (and analogously for $f^\star$), we can invoke  \eqref{328}-\eqref{329} to achieve
	\begin{align*}
		 \int_{\R^n} \int_{\R^n}  |f(x)-f(y)|^p \mathbf{1}_{t K}(x-y) \, dy \, dx    \geq  \int_{\R^n} \int_{\R^n}  |f^\star(x)-f^\star(y)|^p \mathbf{1}_{t K^\star}(x-y) \, dy \, dx,
	\end{align*}
	or in other words (cf. \eqref{PS3} and its counterpart in terms of $f^\star$ and $K^\star$)
	$$
		 \int_{\|h\|_K < t} \|\Delta_h f\|^p_{L^p} \, d h  \geq  \int_{\|h\|_{K^\star} < t} \|\Delta_h f^\star\|^p_{L^p} \, d h.
	$$
	This completes the proof.
\end{proof}

\subsection{Anisotropic Besov norms}\label{SectionBesov}
As can be seen in  \cite{BerghLofstrom, DeVoreLorentz, Stein, Triebel}, Besov spaces are cornerstones in the theory of function spaces. There are several available approaches to introduce these spaces. Among them, the method relying on moduli of smoothness is of special relevance (cf. Remark \ref{RemarkAni} below). We start this subsection by presenting  anisotropic versions of Besov spaces, which are naturally introduced using anisotropic moduli of smoothness.

\begin{defn}\label{DefABs}
	Let $K$ be a star body in $\R^n$ and let $s \in (0, 1)$ and $p, q \in [1, \infty]$. The \emph{anisotropic Besov space} $B^s_{p, q; K}$ is formed by all $f \in L^p$ such that
\begin{equation*}
		\|f\|_{B^s_{p, q; K}} = \bigg(\int_0^\infty t^{-s q} \omega_K(f, t)_p^q \, \frac{dt}{t} \bigg)^{\frac{1}{q}} < \infty
	\end{equation*}
	(where the usual modification is made if $q=\infty$).
\end{defn}

\begin{rem}\label{RemarkAni}
	If $K = B^n$ then $B^s_{p, q; K} = B^s_{p, q}$, the classical Besov space given by
	\begin{equation}\label{ClassicB}
		\|f\|_{B^s_{p, q}} = \bigg(\int_0^\infty t^{-s q} \omega(f, t)_p^q \, \frac{dt}{t} \bigg)^{\frac{1}{q}},
	\end{equation}
	where $\omega(f, t)_p$ is defined by \eqref{CMod}.
\end{rem}

Now we show that anisotropic Besov spaces extend anisotropic fractional Sobolev spaces of Ludwig \cite{Ludwigb, Ludwig}:  the space $W^{s, p}_K, \, s \in (0, 1), \, p \in [1, \infty)$, is equipped with
$$
	\|f\|_{W^{s, p}_K} = \bigg( \int_{\R^n} \int_{\R^n} \frac{|f(x)-f(y)|^p}{\|x-y\|_K^{s p + n}} \, dx \, dy \bigg)^{\frac{1}{p}}.
$$
The case $K=B^n$ corresponds to the classical Gagliardo (semi-)norm \eqref{ClassicGag}.

\begin{prop}\label{Prop3.34}
	We have
	$$
		B^s_{p, p; K} = W^{s, p}_K	
	$$
	with
	$$
		\|f\|_{B^s_{p, p; K}}  = (|K|(s p + n))^{-\frac{1}{p}} \,  \|f\|_{W^{s, p}_K}.
	$$
\end{prop}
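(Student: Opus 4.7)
The plan is to unfold both norms into integrals over $\R^n \times (0,\infty)$ and swap the order of integration via Fubini--Tonelli. All integrands are non-negative, so no integrability hypothesis is required to justify the exchange.

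First I would insert the defining formula for $\omega_K(f,t)_p$ into the $B^s_{p,p;K}$ norm, obtaining
\begin{equation*}
\|f\|_{B^s_{p,p;K}}^p
= \int_0^\infty t^{-sp-n-1}\,\frac{1}{|K|}\int_{\|h\|_K < t} \|\Delta_h f\|_{L^p}^p \, dh \, dt.
\end{equation*}
Then I would apply Tonelli to swap the integrals, using that the condition $\|h\|_K < t$ is equivalent, for fixed $h\neq 0$, to $t > \|h\|_K$:
\begin{equation*}
\|f\|_{B^s_{p,p;K}}^p
= \frac{1}{|K|}\int_{\R^n} \|\Delta_h f\|_{L^p}^p \int_{\|h\|_K}^\infty t^{-sp-n-1}\, dt\, dh.
\end{equation*}

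Next, I would compute the inner $t$-integral explicitly: since $sp+n>0$,
\begin{equation*}
\int_{\|h\|_K}^\infty t^{-sp-n-1}\, dt = \frac{1}{sp+n}\,\|h\|_K^{-sp-n}.
\end{equation*}
Substituting back gives
\begin{equation*}
\|f\|_{B^s_{p,p;K}}^p = \frac{1}{|K|(sp+n)} \int_{\R^n} \frac{\|\Delta_h f\|_{L^p}^p}{\|h\|_K^{sp+n}}\, dh.
\end{equation*}
Finally, Tonelli applied to the double integral on the right (writing $\|\Delta_h f\|_{L^p}^p$ as $\int_{\R^n} |f(x+h)-f(x)|^p dx$ and changing variables $y = x+h$) identifies it with $\|f\|_{W^{s,p}_K}^p$, yielding the claimed identity after taking $p$-th roots.

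There is no essential obstacle here: the statement is a clean Fubini computation, with the only non-trivial ingredient being the evaluation of the elementary integral $\int_{\|h\|_K}^\infty t^{-sp-n-1} dt$, which produces the normalizing factor $(|K|(sp+n))^{-1/p}$. As a sanity check, one may verify the special case $K=B^n$ against the classical equality $B^s_{p,p} = W^{s,p}$ with its standard constant.
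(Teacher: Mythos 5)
Your proof is correct and follows essentially the same route as the paper's: both unfold the anisotropic Besov norm into a double integral, apply Tonelli to swap the order of integration in $(t,h)$, evaluate the inner $t$-integral to produce the factor $(sp+n)^{-1}\|h\|_K^{-sp-n}$, and identify the resulting expression with $\|f\|_{W^{s,p}_K}^p$.
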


\begin{proof}
Changing the order of integration, we derive
\begin{align*}
		\|f\|_{B^s_{p, p; K}}^p &= \int_0^\infty t^{-s p} \omega_K(f, t)_p^p \, \frac{dt}{t}  = \frac{1}{|K|}  \int_0^\infty t^{-s p-n}  \int_{\|h\|_K < t} \|\Delta_h f\|^p_{L^p} \, d h \, \frac{dt}{t} \\
		& =  \frac{1}{|K|}   \int_{\R^n} \|\Delta_h f\|^p_{L^p} \int_{\|h\|_K}^\infty t^{-s p - n} \, \frac{dt}{t} \, dh  = \frac{1}{|K| (s p + n)} \int_{\R^n}  \frac{\|\Delta_h f\|^p_{L^p} }{\|h\|_K^{s p + n}} \, dh \\
		& =  \frac{1}{|K|(s p + n)} \,  \|f\|^p_{W^{s, p}_K}.
	\end{align*}
\end{proof}

\begin{thm}\label{ThmPSv1}
Let $s \in (0, 1), p \in [1, \infty),$ and $q \in [1, \infty]$. If $f \in B^s_{p, q; K}$ then
	\begin{equation}\label{ThmPS1}
		\|f^\star\|_{B^s_{p, q; K^\star}} \leq \|f\|_{B^s_{p, q; K}}.
	\end{equation}
	In particular, letting $p=q$ (cf. \cite[Theorem 11]{HaddadLudwig})
	$$
		\|f^\star\|_{W^{s, p}_{ K^\star}} \leq \|f\|_{W^{s, p}_{ K}}.
	$$
\end{thm}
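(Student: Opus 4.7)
The plan is to obtain \eqref{ThmPS1} as an essentially immediate consequence of the pointwise Pólya--Szegő-type inequality already established in Proposition \ref{ThmPS}. That proposition gives, for every $t>0$,
$$
\omega_{K^\star}(f^\star, t)_p \leq \omega_K(f, t)_p,
$$
and the anisotropic Besov norm from Definition \ref{DefABs} is merely a weighted $L^q$-functional applied to $t \mapsto \omega_K(f,t)_p$. Since the weight $t^{-sq-1}$ is non-negative, monotonicity of the integral transfers the pointwise estimate to the Besov scale.

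First I would raise both sides of the pointwise bound to the $q$-th power, multiply by $t^{-sq-1}$, integrate over $(0,\infty)$, and then take a $q$-th root:
$$
\|f^\star\|_{B^s_{p,q;K^\star}}^{q} = \int_0^\infty t^{-sq} \omega_{K^\star}(f^\star,t)_p^{q}\, \frac{dt}{t} \leq \int_0^\infty t^{-sq} \omega_{K}(f,t)_p^{q}\, \frac{dt}{t} = \|f\|_{B^s_{p,q;K}}^{q}.
$$
The case $q = \infty$ is handled in exactly the same manner, replacing the integral by $\sup_{t>0} t^{-s}\omega_{\cdot}(\cdot,t)_p$; the pointwise inequality of Proposition \ref{ThmPS} passes through the supremum without change.

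For the $p=q$ consequence, I would invoke Proposition \ref{Prop3.34} to convert between the anisotropic Besov and Gagliardo norms: we have $\|f\|_{B^s_{p,p;K}} = (|K|(sp+n))^{-1/p}\|f\|_{W^{s,p}_K}$, and the analogous identity holds for $f^\star$ and $K^\star$. Because $K^\star$ is by construction the Euclidean ball of the same volume as $K$, we have $|K^\star| = |K|$, so the normalizing constants on the two sides coincide. Combining this identity with the already proved \eqref{ThmPS1} yields $\|f^\star\|_{W^{s,p}_{K^\star}} \leq \|f\|_{W^{s,p}_K}$.

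No genuine obstacle is expected at this stage: the heavy lifting was carried out in Proposition \ref{ThmPS}, where the layer-cake decomposition of $|f(x)-f(y)|^p$, Fubini, and the Riesz rearrangement inequality applied to the indicator of $tK$ were used to produce the pointwise rearrangement bound. The present theorem merely repackages that bound through the weighted integral defining $\|\cdot\|_{B^s_{p,q;K}}$.
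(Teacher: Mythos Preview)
Your proposal is correct and matches the paper's proof exactly: the paper states only that the result ``is an immediate consequence of Proposition \ref{ThmPS},'' and you have spelled out precisely that immediate consequence by integrating the pointwise modulus-of-smoothness inequality against the weight $t^{-sq-1}$ (with the obvious supremum modification when $q=\infty$). Your handling of the $p=q$ corollary via Proposition \ref{Prop3.34} and $|K^\star|=|K|$ is also correct.
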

\begin{proof}
	It is an immediate consequence of Proposition \ref{ThmPS}.
\end{proof}

Anisotropic Besov spaces can be characterized in terms of differences as follows.

\begin{prop}\label{PropositionDiff}
Let $s \in (0, 1)$ and $p, q \in [1, \infty)$. We have
	$$
		\|f\|_{B^s_{p, q; K}} \approx \bigg(\frac{1}{|K|} \int_{\R^n} \frac{\|\Delta_h f\|_{L^p}^q}{\|h\|_K^{s q + n}} \, dh \bigg)^{\frac{1}{q}},
	$$
	where the hidden equivalence constants depend only on $n$.
\end{prop}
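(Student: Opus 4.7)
The plan is to reduce the statement to Proposition \ref{Prop3.34} style computation by first replacing the $p$th-power average inside the definition of $\omega_K(f,t)_p$ with a $q$th-power average, and then carrying out the same Fubini calculation.

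First I would invoke Proposition \ref{LemmaTechnical} with the roles of the parameters chosen so that the ``outer'' exponent matches $q$. Concretely, setting
$$
g_q(t) = \bigg(\frac{1}{t^n |K|} \int_{\|h\|_K < t} \|\Delta_h f\|^q_{L^p} \, d h \bigg)^{\frac{1}{q}},
$$
Proposition \ref{LemmaTechnical} gives
$$
\omega_K(f,t)_p = g_p(t) \approx g_q(t),
$$
with equivalence constants depending only on $n$ (and on $p,q$, but these are fixed). Raising to the $q$th power, this reads
$$
\omega_K(f,t)_p^{q} \;\approx\; \frac{1}{t^n |K|} \int_{\|h\|_K < t} \|\Delta_h f\|^q_{L^p} \, d h.
$$

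Next I would multiply by $t^{-sq-1}$ and integrate in $t$ over $(0,\infty)$. Applying Fubini's theorem (as in the proof of Proposition \ref{Prop3.34}, now with exponent $q$ instead of $p$ on the differences), the computation becomes
$$
\int_0^\infty t^{-sq}\,\omega_K(f,t)_p^{q}\,\frac{dt}{t} \;\approx\; \frac{1}{|K|}\int_{\R^n} \|\Delta_h f\|^q_{L^p} \int_{\|h\|_K}^\infty t^{-sq-n}\,\frac{dt}{t}\,dh \;=\; \frac{1}{|K|(sq+n)}\int_{\R^n} \frac{\|\Delta_h f\|^q_{L^p}}{\|h\|_K^{sq+n}}\,dh,
$$
and taking the $q$th root yields the claimed equivalence. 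The case $q=\infty$ would have to be handled separately, but it is not part of the statement.

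The only genuine step is the first one: the ability to swap the $p$th power inside the average for a $q$th power without losing control of constants. This is exactly what Proposition \ref{LemmaTechnical} provides, so there is no real obstacle — the rest is a Fubini computation identical in spirit to the $p=q$ case already treated in Proposition \ref{Prop3.34}.
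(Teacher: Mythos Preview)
Your proposal is correct and follows essentially the same route as the paper: invoke Proposition~\ref{LemmaTechnical} to replace $\omega_K(f,t)_p$ by the $q$-average $g_q(t)$, then apply Fubini exactly as in Proposition~\ref{Prop3.34}. One small remark: the equivalence constants from Proposition~\ref{LemmaTechnical} are $4(2^n+1)^{1/q}$ and $4(2^n+1)^{1/p}$, which for $p,q\in[1,\infty)$ are bounded above by $4(2^n+1)$ and below by $4$, so they genuinely depend only on $n$ --- your parenthetical ``and on $p,q$'' is unnecessary.
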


\begin{proof}
	By virtue of Proposition \ref{LemmaTechnical},
	$$
		\omega_K(f, t)_p \approx \bigg(\frac{1}{t^n |K|} \int_{\|h\|_K < t} \|\Delta_h f\|^q_{L^p} \, d h \bigg)^{\frac{1}{q}},
	$$
	where equivalence constants depend only on $n$. Then,  a change of the order of integration yields
	\begin{equation*}
		\|f\|_{B^s_{p, q; K}}^q \approx \frac{1}{|K|} \int_0^\infty t^{-s q-n} \int_{\|h\|_K < t} \|\Delta_h f\|^q_{L^p} \, d h \, \frac{dt}{t} \approx \frac{1}{|K|} \int_{\R^n} \|h\|_K^{-s q -n} \|\Delta_h f\|^q_{L^p}  \, dh.
	\end{equation*}
\end{proof}

\subsection{Besov polar projection bodies: definition and basic properties}\label{SectionBPPB} We extend the notion of  the fractional polar projection body $\Pi^{*, s}_p$ due to Haddad and Ludwig \cite{HaddadLudwig, HaddadLudwig2} to the Besov polar projection body $\Pi^{*, s}_{p, q}$. Such an extension is made in a natural way, in the same fashion as fractional Sobolev spaces $W^{s, p}$ are part of the more general class formed by Besov spaces $B^s_{p, q}$; recall that $W^{s, p} = B^s_{p, p}$ (cf. Proposition \ref{Prop3.34}).

To avoid repetition, throughout this subsection, we always assume that $s \in (0, 1)$ and $p, q \in [1, \infty)$.

\begin{defn}\label{DefBPPB}
	 For a measurable function $f: \R^n \to \R$, \emph{Besov polar projection bodies} $\Pi^{*, s}_{p, q} f$ are defined as the star-shaped sets in $\R^n$ with gauge function given by
	$$
		\|\xi\|^{s q}_{\Pi^{*, s}_{p, q} f} = \int_0^\infty t^{-s q} \|\Delta_{t \xi} f\|_{L^p}^q \, \frac{dt}{t}, \qquad \xi \in \R^n.
	$$
\end{defn}

\begin{rem}\label{Rem3.38}
	It is obvious that $\Pi^{*, s}_{p, p} f = \Pi^{*, s}_p f$, cf. \eqref{FPPB}.
\end{rem}

\begin{rem}
		The set $\Pi^{*, s}_{p, q}f$ is invariant under translations of $f$ and volume preserving affine transformations:
	\begin{equation}\label{ProofOptimAff0}
		\Pi^{*, s}_{p, q} (f \circ \phi) = \phi^{-1} \Pi^{*, s}_{p, q} f
	\end{equation}
	for every  $\phi \in \text{SL}(n)$. Indeed, for every $\xi \in \R^n$,
	\begin{equation*}
		\|\xi\|^{s q}_{\Pi^{*, s}_{p, q} (f \circ \phi)} = \int_0^\infty t^{-s q} \|\Delta_{t \xi} (f \circ \phi)\|_{L^p}^q \, \frac{dt}{t}  = \int_0^\infty t^{-s q} \|\Delta_{t \phi \xi} f\|_{L^p}^q \, \frac{dt}{t}  = \|\phi \xi\|_{\Pi^{*, s}_{p, q}}^{s q}.
	\end{equation*}
\end{rem}

The relation between the measure of Besov polar projection bodies and classical Besov norms is given in the following result.

\begin{prop}\label{PropHolder}
The following inequality holds
	$$
		n \omega_n^{\frac{n + s q}{n}}  |\Pi^{*, s}_{p, q} f|^{-\frac{s q}{n}} \leq  \int_{\R^n} |h|^{-s q} \|\Delta_h f\|^q_{L^p} \, \frac{dh}{|h|^n}.
	$$
	In particular
	\begin{equation}\label{RelationB}
		 |\Pi^{*, s}_{p, q} f|^{-\frac{s q}{n}}  \lesssim \|f\|^q_{B^s_{p, q}}.
	\end{equation}
\end{prop}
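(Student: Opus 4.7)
The plan is to rewrite the right-hand side as a spherical integral involving $\|\xi\|_{\Pi^{*,s}_{p,q}f}$ and then invoke the dual mixed volume (equivalently, Jensen's) inequality. No delicate analysis is needed; the only step of real content is a convexity estimate.

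\textbf{Step 1 (passage to polar coordinates).} First I would parameterize $h = t\xi$ with $t>0$ and $\xi \in \S^{n-1}$, so that $dh = t^{n-1}\, dt\, d\sigma(\xi)$ and $\frac{dh}{|h|^n} = \frac{dt}{t}\, d\sigma(\xi)$. This turns the right-hand side into
\begin{equation*}
\int_{\R^n} |h|^{-sq}\, \|\Delta_h f\|_{L^p}^q \, \frac{dh}{|h|^n}
= \int_{\S^{n-1}} \!\!\int_0^\infty t^{-sq}\, \|\Delta_{t\xi} f\|_{L^p}^q \, \frac{dt}{t}\, d\sigma(\xi)
= \int_{\S^{n-1}} \|\xi\|_{\Pi^{*,s}_{p,q}f}^{sq} \, d\sigma(\xi),
\end{equation*}
where the last identity is Definition \ref{DefBPPB}.

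\textbf{Step 2 (Jensen / dual mixed volume).} Put $K = \Pi^{*,s}_{p,q} f$ and recall $\|\xi\|_K = \rho_K(\xi)^{-1}$ and the volume formula \eqref{Volume}, i.e., $|K| = \tfrac{1}{n}\int_{\S^{n-1}} \rho_K(\xi)^n\, d\sigma(\xi)$. Since $sq>0$, the function $t \mapsto t^{-sq/n}$ is convex on $(0,\infty)$, so Jensen's inequality with respect to the normalized measure $\frac{d\sigma}{n\omega_n}$ on $\S^{n-1}$ gives
\begin{equation*}
\frac{1}{n\omega_n}\int_{\S^{n-1}} \rho_K(\xi)^{-sq}\, d\sigma(\xi)
\;\geq\; \left(\frac{1}{n\omega_n}\int_{\S^{n-1}} \rho_K(\xi)^{n}\, d\sigma(\xi)\right)^{-sq/n}
= \left(\frac{|K|}{\omega_n}\right)^{-sq/n}.
\end{equation*}
Rearranging, $\int_{\S^{n-1}} \|\xi\|_K^{sq}\, d\sigma(\xi) \geq n\,\omega_n^{(n+sq)/n}\, |K|^{-sq/n}$, which, combined with Step 1, yields the desired inequality. (Equivalently, this is the dual mixed volume inequality $\tilde V_{n+sq}(K,B^n)^n \geq |K|^{-sq}\,|B^n|^{n+sq}$ applied with exponent $n+sq > n$.)

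\textbf{Step 3 (the consequence \eqref{RelationB}).} Finally, applying Proposition \ref{PropositionDiff} with the Euclidean ball $K=B^n$ gives
\begin{equation*}
\int_{\R^n} |h|^{-sq}\, \|\Delta_h f\|_{L^p}^q\, \frac{dh}{|h|^n} \;\approx\; \|f\|_{B^s_{p,q}}^q,
\end{equation*}
so combining with the main inequality produces $|\Pi^{*,s}_{p,q} f|^{-sq/n} \lesssim \|f\|_{B^s_{p,q}}^q$, as claimed. The main obstacle, such as it is, amounts to nothing more than recognizing that the RHS is a spherical moment of the radial function of $\Pi^{*,s}_{p,q}f$; once this reformulation is in place, the estimate is just Jensen's inequality, and equality holds precisely when $\Pi^{*,s}_{p,q}f$ is a centered Euclidean ball.
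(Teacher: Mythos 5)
Your proof is correct and takes essentially the same route as the paper: the paper's key step is H\"older's inequality with exponent $\rho = 1 + \frac{sq}{n}$, which is precisely your Jensen's inequality for the convex power $t \mapsto t^{-sq/n}$ in a different wrapper, and the polar-coordinate rewriting plus the appeal to Proposition \ref{PropositionDiff} for \eqref{RelationB} match the paper's (implicit) reasoning.
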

\begin{proof}
	Applying H\"older's inequality with exponent $\rho = 1 + \frac{s q}{n}$, we have
	\begin{align*}
		|\S^{n-1}|^\rho &\leq \bigg( \int_{\S^{n-1}} \|\xi\|^{s q}_{\Pi^{*, s}_{p, q} f}  \, d \xi \bigg) \bigg( \int_{\S^{n-1}}  \|\xi\|^{-\frac{s q \rho'}{\rho}}_{\Pi^{*, s}_{p, q} f}  \, d \xi \bigg)^{\frac{\rho}{\rho'}} \\
		& = \bigg(\int_{\S^{n-1}}  \int_0^\infty t^{-s q} \|\Delta_{t \xi} f\|_{L^p}^q \, \frac{dt}{t} \, d \xi \bigg)  \bigg( \int_{\S^{n-1}}  \|\xi\|^{-n}_{\Pi^{*, s}_{p, q} f}  \, d \xi \bigg)^{\frac{s q}{n}} \\
		& = \bigg(\int_{\R^n} |h|^{-s q} \|\Delta_h f\|^q_{L^p} \, \frac{dh}{|h|^n} \bigg) (n |\Pi^{*, s}_{p, q} f|)^{\frac{s q}{n}}.
	\end{align*}
	Note that $|\S^{n-1}|= n \omega_n$. Then we have shown that
	$$
		n \omega_n^{\frac{n+ s q}{n}} \leq  \bigg(\int_{\R^n} |h|^{-s q} \|\Delta_h f\|^q_{L^p} \, \frac{dh}{|h|^n} \bigg)  |\Pi^{*, s}_{p, q} f|^{\frac{s q}{n}}.
	$$
\end{proof}

\begin{prop}\label{Prop348}
We have
\begin{equation}\label{347}
	\|f\|_{B^s_{p, q; \Pi^{*, s}_{p, q} f^\star}} = \omega_n^{-\frac{s}{n}} |\Pi^{*, s}_{p, q} f^\star|^{\frac{s}{n}} \|f\|_{B^s_{p, q}},
\end{equation}
and
\begin{equation}\label{347*}
		\frac{1}{4 (2^n+1)^{\frac{1}{p}}} \leq \bigg( \frac{n}{s q + n} \bigg)^{-\frac{1}{q}} \|f\|_{B^s_{p, q; \Pi^{*, s}_{p, q} f}} \leq 4 (2^n + 1)^{\frac{1}{q}}.
	\end{equation}
\end{prop}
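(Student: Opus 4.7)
My plan is to establish the two identities separately: \eqref{347} exploits the radial symmetry of $f^\star$, while \eqref{347*} is a direct computation combined with Proposition \ref{LemmaTechnical}. For \eqref{347}, I would first observe that $\Pi^{*, s}_{p, q} f^\star$ is rotation-invariant, hence a centered Euclidean ball. Indeed, for any orthogonal $\phi$, the substitution $x \mapsto \phi x$ in the $L^p$-integral combined with $f^\star \circ \phi = f^\star$ yields $\|\Delta_{t\phi\xi} f^\star\|_{L^p} = \|\Delta_{t\xi} f^\star\|_{L^p}$, so Definition \ref{DefBPPB} gives $\|\phi\xi\|_{\Pi^{*,s}_{p,q} f^\star} = \|\xi\|_{\Pi^{*,s}_{p,q} f^\star}$ for every $\xi$. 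Writing $\Pi^{*, s}_{p, q} f^\star = R B^n$ with $R = \omega_n^{-1/n} |\Pi^{*, s}_{p, q} f^\star|^{1/n}$ (from $|RB^n| = R^n \omega_n$), I would then note that $\|h\|_{R B^n} = |h|/R$ and $|RB^n| = R^n \omega_n$ yield $\omega_{R B^n}(f, t)_p = \omega(f, tR)_p$; the change of variables $u = tR$ in the integral from Definition \ref{DefABs} then gives $\|f\|_{B^s_{p, q; R B^n}} = R^s \|f\|_{B^s_{p, q}}$, which is precisely \eqref{347}.

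For \eqref{347*}, set $K := \Pi^{*, s}_{p, q} f$. Applying Proposition \ref{LemmaTechnical} (once with $r=p$ and once with $r=q$) shows that $\omega_K(f, t)_p$ is comparable to the $L^q$-averaged quantity
\begin{equation*}
	g_q(t) = \bigg(\frac{1}{t^n |K|} \int_{\|h\|_K < t} \|\Delta_h f\|^q_{L^p}\, dh\bigg)^{1/q},
\end{equation*}
with the asymmetric bounds $g_q(t)/\bigl(4(2^n+1)^{1/p}\bigr) \leq \omega_K(f, t)_p \leq 4(2^n+1)^{1/q}\, g_q(t)$. Hence it suffices to evaluate $\int_0^\infty t^{-sq} g_q(t)^q\, \frac{dt}{t}$ exactly. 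Fubini followed by polar coordinates $h = \rho\eta$ with $\eta \in \S^{n-1}$ turns this integral into
\begin{equation*}
	\frac{1}{|K|(sq+n)} \int_{\S^{n-1}} \|\eta\|_K^{-(sq+n)} \bigg(\int_0^\infty \|\Delta_{\rho\eta} f\|_{L^p}^q\, \rho^{-sq}\, \frac{d\rho}{\rho}\bigg) d\sigma(\eta),
\end{equation*}
in which Definition \ref{DefBPPB} identifies the inner integral as $\|\eta\|_K^{sq}$. These powers cancel, and the remaining $\int_{\S^{n-1}} \|\eta\|_K^{-n}\, d\sigma(\eta) = n|K|$ by \eqref{Volume}, collapsing the whole expression to $n/(sq+n)$. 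Raising the two-sided bound for $\omega_K(f, t)_p$ to the $q$-th power, integrating, and taking $q$-th roots then yields \eqref{347*}.

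The bulk of the argument is the computation for \eqref{347*}. No single step is particularly delicate, but the challenge is to carefully track the asymmetric constants $4(2^n+1)^{1/p}$ and $4(2^n+1)^{1/q}$ from Proposition \ref{LemmaTechnical}; this asymmetry is precisely what forces the two sides of \eqref{347*} to carry different constants.
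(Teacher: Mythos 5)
Your proof is correct and follows essentially the same route as the paper: for \eqref{347} you use rotation invariance of $\Pi^{*,s}_{p,q}f^\star$ and the scaling identity $\omega_{RB^n}(f,t)_p = \omega(f,tR)_p$, and for \eqref{347*} you apply Proposition \ref{LemmaTechnical} together with the Fubini/polar-coordinate computation that collapses the $L^q$-average to $n/(sq+n)$. The only (minor) difference is organizational: the paper proves the upper bound of \eqref{347*} in detail and asserts the lower is similar, whereas you sandwich $\omega_K(f,t)_p$ between multiples of $g_q(t)$ up front and then evaluate $\int_0^\infty t^{-sq}g_q(t)^q\,\tfrac{dt}{t}$ exactly once, obtaining both sides simultaneously.
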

\begin{proof}
We first note that $\Pi^{*, s}_{p, q} f^\star$ is a ball since $f^\star$ is radially symmetric (use that $\|\Delta_{t A \xi} f^\star\|_{L^p} = \|\Delta_{t \xi} f^\star\|_{L^p}$ if $A \in O(n)$). In particular $\|h\|_{\Pi^{*, s}_{p, q} f^\star} = \omega_n^{1/n} |\Pi^{*, s}_{p, q} f^\star|^{-1/n} |h|$, that yields to the following relation between anisotropic and classical moduli of smoothness  (cf. \eqref{CMod})
\begin{align*}
	\omega_{ \Pi^{*, s}_{p, q} f^\star} (f, t)_p & = \bigg(\frac{1}{| \Pi^{*, s}_{p, q} f^\star| t^n} \int_{ |h| < t |\Pi^{*, s}_{p, q} f^\star|^{1/n} \omega_n^{-1/n} } \|\Delta_h f\|^p_{L^p(\R^n)} \, d h \bigg)^{\frac{1}{p}}  = \omega (f,  t |\Pi^{*, s}_{p, q} f^\star|^{1/n} \omega_n^{-1/n} )_p.
\end{align*}
Inserting this into the definition of anisotropic Besov space given in Definition \ref{DefABs} and making a change of variables, we obtain
\begin{align*}
	\|f\|_{B^s_{p, q; \Pi^{*, s}_{p, q} f^\star}} &=  \bigg(\int_0^\infty t^{-s q} \omega_{\Pi^{*, s}_{p, q} f^\star}(f, t)_p^q \, \frac{dt}{t} \bigg)^{\frac{1}{q}} \\
	& =   \bigg(\int_0^\infty t^{-s q} \omega(f, t |\Pi^{*, s}_{p, q} f^\star|^{\frac{1}{n}} \omega_n^{-\frac{1}{n}})_p^q \, \frac{dt}{t} \bigg)^{\frac{1}{q}} \\
	& = \omega_n^{-\frac{s}{n}}  |\Pi^{*, s}_{p, q} f^\star|^{\frac{s}{n}}  \bigg(\int_0^\infty t^{-s q} \omega(f, t)_p^q \, \frac{dt}{t} \bigg)^{\frac{1}{q}}.
\end{align*}
This proves \eqref{347}.

Let us now show \eqref{347*}. We concentrate on the upper estimate, since the lower estimate can be obtained using similar ideas. It follows from \eqref{Lem1} and Fubini's theorem that
\begin{align*}
	\|f\|_{B^s_{p, q; K}} & = \bigg(\int_0^\infty t^{-s q} \omega_K(f, t)_p^q \, \frac{dt}{t} \bigg)^{\frac{1}{q}} \\
	& \leq 4 (2^n + 1)^{1/q} |K|^{-\frac{1}{q}} \bigg(\int_0^\infty t^{-s q-n}   \int_{\|h\|_K < t} \|\Delta_h f\|^q_{L^p(\R^n)} \, d h \, \frac{dt}{t} \bigg)^{\frac{1}{q}} \\
	& = 4 (2^n + 1)^{1/q} |K|^{-\frac{1}{q}}  \bigg(\int_{\R^n} \|\Delta_h f\|^q_{L^p}  \int_{\|h\|_K}^\infty t^{-s q-n} \, \frac{dt}{t} \, dh  \bigg)^{\frac{1}{q}} \\
	& =4 (2^n + 1)^{1/q} |K|^{-\frac{1}{q}} (s q + n)^{-\frac{1}{q}}  \bigg(\int_{\R^n} \|h\|_K^{-s q -n} \|\Delta_h f\|^q_{L^p} \, dh\bigg)^{\frac{1}{q}}.
\end{align*}
Then, using polar coordinates,
\begin{align*}
	\|f\|_{B^s_{p, q; K}}  &\leq 4 (2^n + 1)^{1/q}  |K|^{-\frac{1}{q}} (s q + n)^{-\frac{1}{q}}   \bigg(\int_{\S^{n-1}} \|\xi\|_K^{-s q -n} \int_0^\infty  t^{-s q } \|\Delta_{t \xi} f\|^q_{L^p}  \, \frac{dt}{t} \, d \xi \bigg)^{\frac{1}{q}} \\
	& = 4 (2^n + 1)^{1/q} |K|^{-\frac{1}{q}} (s q + n)^{-\frac{1}{q}}   \bigg(\int_{\S^{n-1}} \|\xi\|_K^{-s q -n} \|\xi\|_{\Pi^{*, s}_{p, q} f}^{s q}   \, d \xi \bigg)^{\frac{1}{q}}.
\end{align*}
In particular, letting $K = \Pi^{*, s}_{p, q} f$ and using \eqref{Volume}, the previous estimate reads as
\begin{align*}
	\|f\|_{B^s_{p, q;  \Pi^{*, s}_{p, q} f}}  &\leq 4 (2^n + 1)^{1/q}  |\Pi^{*, s}_{p, q} f|^{-\frac{1}{q}} (s q + n)^{-\frac{1}{q}}   \bigg(\int_{\S^{n-1}} \|\xi\|_{\Pi^{*, s}_{p, q} f}^{-n}   \, d \xi \bigg)^{\frac{1}{q}} \\
	& =4 (2^n + 1)^{1/q}  |\Pi^{*, s}_{p, q} f|^{-\frac{1}{q}} (s q + n)^{-\frac{1}{q}}  (n |\Pi^{*, s}_{p, q} f|)^{\frac{1}{q}}  \\
	&= 4 \bigg( \frac{n (2^n +1)}{s q + n} \bigg)^{\frac{1}{q}}.
\end{align*}
This proves the upper estimate in \eqref{347*}.
\end{proof}

\begin{rem}
	Constants in \eqref{347*} are not sharp. For instance, one can carry out the above argument but now relying on \eqref{Lem2} (rather than \eqref{Lem1}) and then
	$$
		 \bigg( \frac{n}{s q + n} \bigg)^{-\frac{1}{q}} \|f\|_{B^s_{p, q; \Pi^{*, s}_{p, q} f}} \leq 1 \qquad \text{if} \qquad p \leq q
	$$
	and
	$$
		 \bigg( \frac{n}{s q + n} \bigg)^{-\frac{1}{q}} \|f\|_{B^s_{p, q; \Pi^{*, s}_{p, q} f}} \geq 1  \qquad \text{if} \qquad p \geq q.
	$$
	In the special case $p=q$, from Proposition \ref{Prop3.34} and Remark \ref{Rem3.38}, we derive the following formula
	$$
		 n^{-\frac{1}{p}} |\Pi^{*, s}_{p} f|^{-\frac{1}{p}} \|f\|_{W^{s, p}_{\Pi^{*, s}_{p} f}} = 1.
	$$
\end{rem}

\subsection{The P\'olya--Szeg\H{o} inequality for Besov polar projection bodies}\label{SectionPSB}
The classical P\'olya--Szeg\H{o} inequality for Besov norms asserts that (cf. \eqref{PointEstimRea} and \eqref{ClassicB})
\begin{equation}\label{ClassicPSB}
	\|f^\star\|_{B^s_{p, q}} \leq \|f\|_{B^s_{p, q}}.
\end{equation}
Next we establish the affine version of this inequality.

\begin{thm}\label{ThmPSBod}
We have
	\begin{equation}\label{PSbod}
		|\Pi^{*, s}_{p, q} f^\star|^{-\frac{s q}{n}} \leq 4^q (2^n + 1)^{\max\{\frac{q}{p}, 1\}}\,  |\Pi^{*, s}_{p, q} f|^{-\frac{s q}{n}}.
	\end{equation}
	If $p=q$ the above constant $4^p (2^n + 1)$ can be replaced by $1$.
\end{thm}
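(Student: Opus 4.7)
My plan is to combine the anisotropic P\'olya--Szeg\H{o} inequality (Theorem \ref{ThmPSv1}) with the two-sided estimate \eqref{347*}, suitably sharpened via \eqref{Lem2} in the appropriate range (as recorded in the Remark after Proposition \ref{Prop348}). Two geometric observations drive the argument: both $\Pi^{*,s}_{p,q}f^\star$ and $(\Pi^{*,s}_{p,q}f)^\star$ are Euclidean balls (the former because $f^\star$ is radially symmetric, the latter by definition of Schwarz symmetrization), and for any ball $K$ the computation used in the proof of Proposition \ref{Prop348} specializes to the clean identity
$$\|g\|_{B^s_{p,q;K}} = \omega_n^{-s/n} |K|^{s/n} \|g\|_{B^s_{p,q}}$$
valid for every $g$. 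This identity will be the main linearization tool, letting me trade anisotropic Besov norms (evaluated on balls) for the isotropic one weighted by the corresponding volume.

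First, I would apply Theorem \ref{ThmPSv1} with $K = \Pi^{*,s}_{p,q} f$, which gives
$$\|f^\star\|_{B^s_{p,q;(\Pi^{*,s}_{p,q}f)^\star}} \leq \|f\|_{B^s_{p,q;\Pi^{*,s}_{p,q}f}}.$$
Since $(\Pi^{*,s}_{p,q}f)^\star$ is a ball of volume $|\Pi^{*,s}_{p,q}f|$, the ball identity rewrites the left-hand side and the upper estimate in \eqref{347*} bounds the right-hand side, producing
$$\omega_n^{-s/n} |\Pi^{*,s}_{p,q}f|^{s/n} \|f^\star\|_{B^s_{p,q}} \leq C_1 \Bigl(\frac{n}{sq+n}\Bigr)^{1/q}. \qquad (\mathrm{I})$$
Next, I would apply the lower estimate in \eqref{347*} to $f^\star$ with body $\Pi^{*,s}_{p,q}f^\star$ (itself a ball) and the ball identity once more to obtain
$$\omega_n^{-s/n} |\Pi^{*,s}_{p,q}f^\star|^{s/n} \|f^\star\|_{B^s_{p,q}} \geq C_2 \Bigl(\frac{n}{sq+n}\Bigr)^{1/q}. \qquad (\mathrm{II})$$
Dividing $(\mathrm{I})$ by $(\mathrm{II})$ cancels the common factors $\omega_n^{-s/n}\|f^\star\|_{B^s_{p,q}}$ and $(n/(sq+n))^{1/q}$, leaving $(|\Pi^{*,s}_{p,q}f|/|\Pi^{*,s}_{p,q}f^\star|)^{s/n} \leq C_1/C_2$, which after raising to the $q$-th power is precisely \eqref{PSbod}.

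The delicate point --- and the reason the exponent $\max\{q/p,1\}$ appears --- lies in tracking the constants $C_1, C_2$. When $p \leq q$, inequality \eqref{Lem2} gives the sharp upper bound $C_1 = 1$, while the lower bound still requires \eqref{Lem1} with $C_2 = (4(2^n+1)^{1/p})^{-1}$, so the $q$-th power of $C_1/C_2$ is $4^q(2^n+1)^{q/p}$. When $p \geq q$, the roles of \eqref{Lem1} and \eqref{Lem2} reverse, producing $C_1 = 4(2^n+1)^{1/q}$, $C_2 = 1$, and hence the $q$-th power $4^q(2^n+1)$. Taken together these cases yield $4^q(2^n+1)^{\max\{q/p,1\}}$. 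When $p = q$ both \eqref{Lem1} and \eqref{Lem2} reduce to trivial equalities, so \eqref{347*} becomes an exact identity (cf.\ the Remark after Proposition \ref{Prop348}) with $C_1 = C_2$, collapsing the final constant to $1$. The main obstacle is essentially bookkeeping: using the non-sharp half of \eqref{347*} on both sides of the division would yield the suboptimal constant $16^q(2^n+1)^{1+q/p}$, so it is essential to invoke \eqref{Lem2} in precisely the right half of the argument according to the sign of $p-q$.
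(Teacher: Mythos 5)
Your proof is correct, and it takes a route that is organized noticeably differently from the paper's. The paper, following Haddad--Ludwig, proves the intermediate inequality
$$
\tilde{V}_{-s q}(K, \Pi^{*,s}_{p,q}f) \geq \frac{1}{4^q (2^n+1)^{\max\{q/p,1\}}}\,\tilde{V}_{-s q}(K^\star, \Pi^{*,s}_{p,q}f^\star)
$$
for an \emph{arbitrary} star-shaped $K$, using \eqref{ThmPSBod1}--\eqref{ThmPSBod3} together with Theorem~\ref{ThmPSv1}; it then invokes the dual mixed volume inequality $\tilde{V}_{-s q}(K^\star,L)\ge |K^\star|^{1+\frac{sq}{n}}|L|^{-\frac{sq}{n}}$ and finally specializes $K=\Pi^{*,s}_{p,q}f$. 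You instead specialize $K=\Pi^{*,s}_{p,q}f$ at the very start, use the two geometric facts that $(\Pi^{*,s}_{p,q}f)^\star$ and $\Pi^{*,s}_{p,q}f^\star$ are both Euclidean balls, and replace the dual mixed volume inequality by the elementary ``ball identity'' $\|g\|_{B^s_{p,q;K}}=\omega_n^{-s/n}|K|^{s/n}\|g\|_{B^s_{p,q}}$ extracted from the proof of Proposition~\ref{Prop348}; the conclusion drops out by dividing the two specializations of \eqref{347*}. Since in the paper's argument the dual mixed volume inequality is ultimately applied to a pair of balls (where it degenerates to an equality), the two proofs are close in substance, but your organization is more elementary: it sidesteps dual mixed volumes entirely and keeps everything at the level of Proposition~\ref{Prop348}, Lemma~\ref{LemmaTechnical}, and Theorem~\ref{ThmPSv1}. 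The case-by-case tracking of the constant (using \eqref{Lem2} for the sharp half and \eqref{Lem1} for the non-sharp half, depending on the sign of $p-q$) is done correctly and reproduces the exponent $\max\{q/p,1\}$; the collapse to constant $1$ when $p=q$ is also correct. One small point worth stating explicitly: the division by $\|f^\star\|_{B^s_{p,q}}$ requires this quantity to be nonzero, which holds precisely when $f$ is not a.e.\ zero; in the degenerate case both sides of \eqref{PSbod} vanish and the inequality is trivial. The paper's proof has the analogous implicit non-degeneracy assumption, so this is not a gap specific to your argument.
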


\begin{rem}
	Inequality \eqref{PSbod} is stronger than \eqref{ClassicPSB} in the sense that
	$$
		\|f^\star\|_{B^s_{p, q}}^q  \approx |\Pi^{*, s}_{p, q} f^\star|^{-\frac{s q}{n}} \lesssim |\Pi^{*, s}_{p, q} f|^{-\frac{s q}{n}} \lesssim \|f\|_{B^s_{p, q}}^q.
	$$
	The first estimate easily follows from \eqref{347} and \eqref{347*} applied to $f^\star$. Concerning the last estimate, see \eqref{RelationB}.
\end{rem}

As in  \cite[Theorem 13]{HaddadLudwig}, where the case $p=q$ was already considered, the proof of \eqref{PSbod} is a simple consequence of  the P\'olya--Szeg\H{o} inequalities stated in Theorem \ref{ThmPSv1} via the dual mixed volume inequality. However, in sharp contrast with the case $p=q$, the general case $p \neq q$ requires additional technicalities that essentially come from the fact that
\begin{equation}\label{Diffbn}
	\|f\|_{B^s_{p, q; K}} = \bigg(\frac{1}{|K|} \int_{\R^n} \|h\|_K^{-s q - n} \|\Delta_h f\|_{L^p}^q \, dh \bigg)^{1/q}
\end{equation}
is no longer true, but we only have the weaker assertion obtained when $=$ in \eqref{Diffbn} is replaced by $\approx$ (cf. Proposition \ref{PropositionDiff}). This is the reason behind the additional constant $4^q (2^n + 1)^{\max\{\frac{q}{p}, 1\}}$ in \eqref{PSbod}, which does not appear if $p=q$. It may be well the case that this constant is not optimal\footnote{Optimal constants in P\'olya--Szeg\H{o} inequalities for moduli of smoothness are a rather delicate issue, even in the classical setting $K = B^n$.}. However, in later applications, we will be mainly interested in the special choice of parameters $q=n < p$ and then  $4^n (2^n + 1) \approx 8^n$.

\begin{proof}[Proof of Theorem \ref{ThmPSBod}]
Let $K \subset \R^n$ be a star-shaped set. By polar coordinates, we have
\begin{align}
	\tilde{V}_{-s q} (K, \Pi^{*, s}_{p, q} f) & = \frac{1}{n} \int_{\S^{n-1}} \|\xi\|_{K}^{-n-s q} \|\xi\|^{s q}_{\Pi^{*, s}_{p, q} f} \, d \xi \nonumber \\
	& =  \frac{1}{n} \int_{\S^{n-1}} \|\xi\|_{K}^{-n-s q}  \int_0^\infty t^{-s q} \|\Delta_{t \xi} f\|_{L^p}^q \, \frac{dt}{t} \, d \xi \nonumber \\
	& =  \frac{1}{n}  \int_{\R^n} \|h\|_{K}^{-n-s q} \|\Delta_h f\|_{L^p}^q \, d h.\label{ThmPSBod1}
\end{align}
	
	Since we are interested in applying \eqref{ThmPS1}, we need to replace the Besov norm given in \eqref{ThmPSBod1} by the equivalent one  $\|f\|_{B^s_{p, q;K}}$. This is possible through the application of Lemma \ref{LemmaTechnical}. Assume first that $q < p$. It follows from Fubini's theorem and \eqref{Lem1} that
	\begin{align}
		 \int_{\R^n} \|h\|_{K}^{-n-s q} \|\Delta_h f\|_{L^p}^q \, d h & = (n+s q) \int_{\R^n}  \|\Delta_h f\|_{L^p}^q \int_{\|h\|_K}^\infty t^{-n - s q} \, \frac{dt}{t} \, dh \nonumber \\
		 & = (n+s q) |K|  \int_0^\infty t^{- s q} \frac{1}{t^n |K|} \int_{\|h\|_K < t}  \|\Delta_h f\|_{L^p}^q \, dh \, \frac{dt}{t} \nonumber\\
		 & \geq  \frac{n+s q}{4^q (2^n + 1)} |K| \int_0^\infty t^{-s q} \bigg(\frac{1}{t^n |K|} \int_{\|h\|_K < t} \|\Delta_h f\|_{L^p}^p \, dh \bigg)^{q/p} \, \frac{dt}{t} \nonumber \\
		 & = \frac{n+s q}{4^q (2^n + 1)} |K| \, \|f\|^q_{B^s_{p, q; K}}.\label{ThmPSBod2}
	\end{align}
	Conversely, by \eqref{Lem2}, one can easily prove that
	\begin{equation}\label{ThmPSBod3}
	 \int_{\R^n} \|h\|_{K}^{-n-s q} \|\Delta_h f\|_{L^p}^q \, d h \leq (n + s q) |K| \, \|f\|^q_{B^s_{p, q; K}}.
	\end{equation}

	It follows now from  \eqref{ThmPSBod1}, \eqref{ThmPSBod2}, \eqref{ThmPS1} and \eqref{ThmPSBod3} (applied to $f^\star$ and $K^\star$) that
	\begin{align*}
		\tilde{V}_{-s q} (K, \Pi^{*, s}_{p, q} f) & \geq \frac{n+s q}{4^q (2^n + 1) n} |K| \, \|f^\star\|^q_{B^s_{p, q; K^\star}} \\
		 & \geq \frac{1}{4^q (2^n + 1) n}  \int_{\R^n} \|h\|_{K^\star}^{-n-s q} \|\Delta_h f^\star\|_{L^p}^q \, d h,
	\end{align*}
	and invoking again  \eqref{ThmPSBod1}, but now applied to $f^\star, K^\star$, we achieve
	\begin{equation}\label{ThmPSBod4}
		\tilde{V}_{-s q} (K, \Pi^{*, s}_{p, q} f) \geq  \frac{1}{4^q (2^n + 1)}   	\tilde{V}_{-s q} (K^\star, \Pi^{*, s}_{p, q} f^\star).
	\end{equation}
	
	The analog of \eqref{ThmPSBod4} under $q \geq p$ can be analogously obtained and it reads
	\begin{equation}\label{ThmPSBod4*}
		\tilde{V}_{-s q} (K, \Pi^{*, s}_{p, q} f) \geq  \frac{1}{4^q (2^n + 1)^{\frac{q}{p}}}   	\tilde{V}_{-s q} (K^\star, \Pi^{*, s}_{p, q} f^\star).
	\end{equation}

	At this point, the desired estimate \eqref{PSbod} can be obtained by mimicking  the proof of  \cite[Theorem 13]{HaddadLudwig}  related to the case $p=q$. For convenience of the reader, we provide below with full details.
	
	Applying the dual mixed volume inequality (which is a consequence of H\"older's inequality)
	$$
			\tilde{V}_{-s q} (K^\star, \Pi^{*, s}_{p, q} f^\star) \geq |K|^{1 + \frac{s q}{n}} |\Pi^{*, s}_{p, q} f^\star|^{-\frac{s q}{n}}.	
	$$
	Hence, by \eqref{ThmPSBod4} and \eqref{ThmPSBod4*},
	$$
		\tilde{V}_{-s q} (K, \Pi^{*, s}_{p, q} f) \geq  \frac{1}{4^q (2^n + 1)^{\max\{\frac{q}{p}, 1\}}} \, |K|^{1 + \frac{s q}{n}} |\Pi^{*, s}_{p, q} f^\star|^{-\frac{s q}{n}}.
	$$
	In particular, taking $K = \Pi^{*, s}_{p, q} f$ in the previous estimate,
	$$
		 |\Pi^{*, s}_{p, q} f|^{-\frac{s q}{n}} \geq  \frac{1}{4^q (2^n + 1)^{\max\{\frac{q}{p}, 1\}}} \, |\Pi^{*, s}_{p, q} f^\star|^{-\frac{s q}{n}}.
	$$
\end{proof}

\subsection{Connections between $\Pi^{*, s}_{p, q} f$ and $\Pi^*_p f$}\label{SectionCBC}
Applying similar techniques as in  \cite[Theorem 10]{HaddadLudwig} (which corresponds to the special case $p=q$; see also \eqref{HLLimit}), one can show that
\begin{equation}\label{HLLimit2}
		\lim_{s \to 1-} (q (1-s))^{\frac{1}{q}} \|\xi\|_{\Pi^{*, s}_{p, q} f} =  \|\xi\|_{\Pi^*_p f}
\end{equation}
for every $f \in W^1_p$ and $\xi \in \S^{n-1}$.
The formula \eqref{HLLimit2} may be viewed as directional counterparts of known extensions (cf. \cite{KMX, KL}) of the Bourgain-Brezis-Mironescu formula \eqref{Intro3} from $W^{s, p} = B^s_{p, p}$ to the full scale of Besov spaces $B^s_{p, q}$.

In our further considerations,  we shall need the following variant  of \eqref{HLLimit2} where the limit is not only taken with respect to the smoothness parameter $s$, but also with respect to the integrability parameter $p$. Related results in the classical setting of Gagliardo-type functionals and total variation may be found in \cite{BN16}.

\begin{theorem}\label{Th1}
Let $\xi\in\mathbb{S}^{n-1}$.
Then, for any $f\in C_c^2(\R^n)$,
\begin{equation}\label{Thm1for1}
\lim\limits_{p\to n+}
\left(1-\frac np\right)^{\frac{p}{n}}  \|\xi\|_{\Pi^{*, \frac{n}{p}}_{p, n} f}^{n}  =
\frac1n \,  \|\xi\|_{\Pi^*_n f}^n.
\end{equation}
\end{theorem}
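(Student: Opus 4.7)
The plan is to unfold Definition~\ref{DefBPPB} with $s = n/p$ and $q = n$, which gives
\begin{equation*}
\|\xi\|_{\Pi^{*,n/p}_{p,n} f}^{n}
= \left(\int_0^\infty t^{-n^2/p-1}\,\|\Delta_{t\xi} f\|_{L^p}^n \, dt\right)^{p/n}.
\end{equation*}
Factoring the prefactor $(1-n/p)^{p/n}$ inside the exponent and using that $p/n \to 1$ together with the continuity of $x \mapsto x^{p/n}$ on $[0,\infty)$, the assertion \eqref{Thm1for1} reduces to the one-dimensional limit
\begin{equation*}
\lim_{p\to n+}\bigg(1-\frac{n}{p}\bigg)\int_0^\infty t^{-n^2/p-1}\,\|\Delta_{t\xi} f\|_{L^p}^n\, dt \;=\; \frac{1}{n}\,\|\xi\|_{\Pi^*_n f}^n.
\end{equation*}
This is the form I would attack by a split-and-Taylor analysis.

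Splitting the $t$-integral at $t = 1$, the tail is easy: from $\|\Delta_{t\xi} f\|_{L^p}\leq 2\|f\|_{L^p}$ one gets
\begin{equation*}
\int_1^\infty t^{-n^2/p-1}\,\|\Delta_{t\xi} f\|_{L^p}^n\, dt \;\leq\; \frac{p\cdot 2^n}{n^2}\,\|f\|_{L^p}^n,
\end{equation*}
which stays bounded as $p\to n+$ (using $\|f\|_{L^p}\to\|f\|_{L^n}$ for $f\in C_c^2$), hence is killed by the prefactor $1 - n/p$. On the head $t\in(0,1]$, the hypothesis $f\in C_c^2$ provides the Taylor expansion
\begin{equation*}
\Delta_{t\xi} f(x) \;=\; t\,\nabla f(x)\cdot\xi \;+\; R(x,t), \qquad \|R(\cdot,t)\|_{L^p}\leq C\,t^2,
\end{equation*}
where $C$ is uniform for $p$ in a fixed neighborhood of $n$ (the Lipschitz constant of $\nabla f$ controls $|R|$ pointwise, and $R(\cdot,t)$ is supported in a fixed compact set enlarging $\operatorname{supp} f$ along $\xi$). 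The elementary inequality $|a^n - b^n|\leq n\max(a,b)^{n-1}|a-b|$ then yields
\begin{equation*}
\bigl|\,\|\Delta_{t\xi} f\|_{L^p}^n - t^n\,\|\nabla f\cdot\xi\|_{L^p}^n\bigr| \;\leq\; C'\,t^{n+1}, \qquad t\in(0,1],
\end{equation*}
with $C'$ uniform for $p$ close to $n$.

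The main term integrates exactly:
\begin{equation*}
\bigg(1-\frac{n}{p}\bigg)\,\|\nabla f\cdot\xi\|_{L^p}^n\int_0^1 t^{n - n^2/p-1}\,dt \;=\; \frac{p-n}{p}\cdot\frac{p}{n(p-n)}\,\|\nabla f\cdot\xi\|_{L^p}^n \;=\; \frac{1}{n}\,\|\nabla f\cdot\xi\|_{L^p}^n,
\end{equation*}
and this tends to $\frac{1}{n}\|\nabla f\cdot\xi\|_{L^n}^n = \frac{1}{n}\|\xi\|_{\Pi^*_n f}^n$ as $p\to n+$, since $\nabla f\cdot\xi$ is bounded and compactly supported. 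The error integrates to $O\bigl((n+1 - n^2/p)^{-1}\bigr) = O(1)$ near $p=n$ and is therefore killed by the $1-n/p$ prefactor.

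The step I expect to be the main obstacle is the uniform-in-$p$ control of the Taylor remainder and the $L^p$ norms: in \eqref{HLLimit2} only the smoothness parameter $s$ varies, whereas here both the weight $t^{-n^2/p-1}$ and the integrability of $\|\cdot\|_{L^p}$ move simultaneously with the limiting parameter. The $C_c^2$ hypothesis is tailored exactly to keep every constant above bounded as $p\to n+$, which is what makes the plan go through without further technology.
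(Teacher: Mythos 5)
Your proof is correct, and its core ingredients are the same as the paper's: the split of the $t$-integral at $t=1$, absorbing the tail via $\|\Delta_{t\xi}f\|_{L^p}\le 2\|f\|_{L^p}$, the uniform-in-$p$ first-order remainder bound $\|\Delta_{t\xi}f - t\,\nabla f\cdot\xi\|_{L^p}\lesssim t^2$ coming from the Lipschitz continuity of $\nabla f$ and the compact support, and the convergence $\|\nabla f\cdot\xi\|_{L^p}\to\|\nabla f\cdot\xi\|_{L^n}$. The difference is in packaging. The paper routes the head $t\in(0,1)$ through the abstract two-parameter limit result Lemma~\ref{Th2}: it checks the two conditions \eqref{Th1-e3} for $g(t,\varepsilon)=\|\Delta_{t\xi}f\|_{L^{p(\varepsilon)}}$ against the approximate identity $\rho_\varepsilon(t)=\varepsilon t^{\varepsilon-1}\mathbf{1}_{(0,1)}(t)$ with $\varepsilon=n(1-n/p)$. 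You instead perform the same cancellation by hand, expanding $\|\Delta_{t\xi}f\|_{L^p}^n$ around $t^n\|\nabla f\cdot\xi\|_{L^p}^n$ via the elementary inequality $|a^n-b^n|\le n\max(a,b)^{n-1}|a-b|$, integrating the leading term exactly (the exact cancellation $\frac{p-n}{p}\cdot\frac{p}{n(p-n)}=\frac{1}{n}$) and killing the $O(t^{n+1})$ remainder by the vanishing prefactor. Your version is more elementary and self-contained in that it bypasses Lemma~\ref{Th2} and the reference \cite{DLTYY}; the paper's packaging is designed to be re-used in the proof of the norm limit \eqref{ProofSFMT6}, where the same lemma is invoked again. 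In both arguments one also has to note, as you do, that raising to the power $p/n\to 1$ at the end is harmless once the bracketed expression converges to a finite limit.
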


Formula \eqref{Thm1for1} is a simple application of the following result on convergence of functionals involving families of nonnegative functions $\{\rho_{\varepsilon}\}_{\varepsilon\in(0,\infty)}$ on $(0,\infty)$
satisfying that,
for any $\varepsilon\in(0,\infty)$,
\begin{align}\label{e1}
\int_{0}^\infty\rho_{\varepsilon}(t)\,dt=1
\end{align}
and, for any $a\in(0,1)$,
\begin{align}\label{e2}
\lim\limits_{\varepsilon\to0+}
\int_{0}^{a}\rho_{\varepsilon}(t)\,dt=1.
\end{align}

\begin{lem}\label{Th2}
Let $p\in(0,\infty)$ and $\{\rho_{\varepsilon}\}_{\varepsilon\in(0,\infty)}$
be a family of nonnegative functions on $(0,\infty)$
satisfying both \eqref{e1} and \eqref{e2}.
Assume $g$ is a nonnegative function on
$(0,\infty)\times(0,\infty)$ satisfying that\footnote{$\lim_{t, \varepsilon \to 0+} f(t, \varepsilon) = L$ means that given any $\eta > 0$ there exists $\delta > 0$ such that if $t, \varepsilon \in (0, \delta)$ then $|f(t, \varepsilon)-L| < \eta$.}
\begin{align*}
L:=\lim\limits_{t,\varepsilon\to0+}\frac{g(t,\varepsilon)}{t}
\in[0,\infty)\qquad \text{and}\qquad
\limsup_{\varepsilon\to0+}\sup_{t\in(0,\infty)}
\frac{g(t,\varepsilon)}{t}<\infty.
\end{align*}
Then
\begin{align*}
\lim_{\varepsilon\to0+}
\left\{\int_{0}^{\infty}\left[\frac{g(t,\varepsilon)}{t}\right]^p
\rho_{\varepsilon}(t)\,dt\right\}^{\frac1p}=L.
\end{align*}
\end{lem}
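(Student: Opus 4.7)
Set $h_\varepsilon(t) := g(t,\varepsilon)/t$ for $t, \varepsilon \in (0,\infty)$. The two hypotheses on $g$ translate to (a) $h_\varepsilon(t) \to L$ as $t, \varepsilon \to 0+$, and (b) there exist $M \in [0,\infty)$ and $\varepsilon_1 \in (0,\infty)$ such that $h_\varepsilon(t) \leq M$ for all $t \in (0,\infty)$ and all $\varepsilon \in (0,\varepsilon_1)$. Since raising to the $p$-th power and then to the $(1/p)$-th is continuous on $[0,\infty)$, it suffices to show that $I(\varepsilon) := \int_0^\infty h_\varepsilon(t)^p \rho_\varepsilon(t)\,dt \to L^p$ as $\varepsilon \to 0+$.

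The plan is a standard approximate-identity split. Fix $\eta \in (0,1)$. By (a), choose $\delta = \delta(\eta) \in (0,1)$ and $\varepsilon_0 = \varepsilon_0(\eta) \in (0,\varepsilon_1)$ such that $|h_\varepsilon(t) - L| < \eta$ whenever $t, \varepsilon \in (0,\delta)$. Then split
\begin{equation*}
I(\varepsilon) = \int_0^\delta h_\varepsilon(t)^p \rho_\varepsilon(t)\,dt + \int_\delta^\infty h_\varepsilon(t)^p \rho_\varepsilon(t)\,dt =: I_1(\varepsilon) + I_2(\varepsilon).
\end{equation*}

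For $\varepsilon \in (0, \varepsilon_0)$, the choice of $\delta$ gives the two-sided bound
\begin{equation*}
\max\{L-\eta, 0\}^p \int_0^\delta \rho_\varepsilon(t)\,dt \;\leq\; I_1(\varepsilon) \;\leq\; (L+\eta)^p \int_0^\delta \rho_\varepsilon(t)\,dt,
\end{equation*}
and by \eqref{e2} (applicable since $\delta < 1$) the factor $\int_0^\delta \rho_\varepsilon(t)\,dt$ converges to $1$ as $\varepsilon \to 0+$. For $I_2$, property (b) together with \eqref{e1} yields
\begin{equation*}
0 \leq I_2(\varepsilon) \leq M^p \int_\delta^\infty \rho_\varepsilon(t)\,dt = M^p\left(1 - \int_0^\delta \rho_\varepsilon(t)\,dt\right) \longrightarrow 0.
\end{equation*}

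Combining the two estimates, $\max\{L-\eta,0\}^p \leq \liminf_{\varepsilon \to 0+} I(\varepsilon) \leq \limsup_{\varepsilon \to 0+} I(\varepsilon) \leq (L+\eta)^p$. Letting $\eta \to 0+$ gives $\lim_{\varepsilon \to 0+} I(\varepsilon) = L^p$, and taking the $p$-th root concludes the proof. The argument has no real obstacle; the only mild point is to handle the lower bound when $L < \eta$ (which is why we took the positive part $\max\{L-\eta,0\}$), but this is irrelevant after letting $\eta \to 0+$.
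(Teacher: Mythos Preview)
Your proof is correct and follows the standard approximate-identity splitting argument; the paper does not give its own proof of this lemma but simply refers to \cite[Lemma 2.8(i)]{DLTYY} (the special case $g(t,\varepsilon)=g(t)$), for which your argument is the natural extension. One tiny cosmetic point: since the footnote defines the joint limit via a single $\delta$, your $\varepsilon_0$ is just $\min(\delta,\varepsilon_1)$, which you could state explicitly.
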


The proof of this result follows similar ideas as in \cite[Lemma 2.8(i)]{DLTYY}, where   the special case $g(t, \varepsilon) = g(t), \, \varepsilon > 0,$ was already considered.

\begin{proof}[Proof of Theorem \ref{Th1}]
Observe that
$$
	\int_1^\infty \left[t^{-\frac{n}{p}} \|\Delta_{t \xi} f\|_{L^p} \right]^n \, \frac{dt}{t} \leq  \frac{2^n p }{n^2} \, \|f\|_{L^p}^n.
$$
Therefore, to show \eqref{Thm1for1},
we only need to check that
\begin{align}\label{Th1-e1}
\lim\limits_{p\to n+}n
\left(1-\frac np\right)\int_{0}^{1}
\left[t^{-\frac np}\left\|\Delta_{t\xi}f\right\|_{L^p}
\right]^n\,\frac{dt}{t}=
\int_{\rn}\left|\nabla f(x)\cdot\xi\right|^n\,dx.
\end{align}
In order to show this, we prove
\begin{align}\label{Th1-e2}
\lim_{\varepsilon\to0+}\int_{0}^{\infty}\left[
\frac{\|\Delta_{t\xi}f\|_{L^{p(\varepsilon)}}}{t}\right]^n
\rho_{\varepsilon}(t)\,dt
=\int_{\rn}\left|\nabla f(x)\cdot\xi\right|^n\,dx,
\end{align}
where the sequence $\{\rho_{\varepsilon}\}_{\varepsilon\in(0,\infty)}$
of nonnegative functions on $(0,\infty)$ satisfies both
\eqref{e1} and \eqref{e2} and $p(\cdot):\ (0,\infty)\to[1,\infty)$
is a nonnegative function satisfying $\lim_{\varepsilon\to0+}
p(\varepsilon)=n$.
Indeed, if we assume \eqref{Th1-e2} holds for the moment,
then, letting $\rho_{\varepsilon}(t):=\varepsilon t^{\varepsilon-1}{\bf 1}_{(0,1)}(t)$,
$\varepsilon:=n(1-\frac np)$, and $p(\varepsilon):=\frac{n^2}{n-\varepsilon}$,
we obtain \eqref{Th1-e1} and hence complete the proof of \eqref{Thm1for1}.

Thus, it suffices to show \eqref{Th1-e2}.
Let $g(t,\varepsilon)=\|\Delta_{t\xi}f\|_{L^{p(\varepsilon)}}$.
By Lemma \ref{Th2}, we only require to prove
\begin{align}\label{Th1-e3}
\lim_{t,\varepsilon\to0+}\frac{g(t,\varepsilon)}{t}
=\left\|\nabla f\cdot \xi\right\|_{L^n}
\qquad  \text{and}\qquad
\limsup_{\varepsilon\to0+}
\sup_{t\in(0,\infty)}\frac{g(t,\varepsilon)}{t}<\infty.
\end{align}
Notice that $g(t,\varepsilon)
\leq  t \left\|\nabla f\right\|_{L^{p(\varepsilon)}}$ (cf. \eqref{314b2}) and then $\limsup_{\varepsilon\to0+}
\sup_{t\in(0,\infty)}\frac{g(t,\varepsilon)}{t} \leq \|\nabla f\|_{L^n} < \infty$. Without loss of generality, assume that $\text{supp } f \subset B(0, R)$ for some $R > 0$ and $t \in (0, R)$. Then, from triangle inequality and \eqref{LipEstim} (with $h = t \xi$),
\begin{align*}
\left|\frac{\|\Delta_{t\xi}f\|_{L^{p(\varepsilon)}}}{t}
-\left\|\nabla f\cdot\xi\right\|_{L^n}\right| &\le \left\| \frac{\Delta_{t \xi} f}{t} - \nabla f \cdot \xi \right\|_{L^{p(\varepsilon)}}  + \left|\left\|\nabla f\cdot\xi\right\|_{L^{p(\varepsilon)}}
-\left\|\nabla f\cdot\xi\right\|_{L^n}\right| \\
&\leq A  |B(0, 2R)|^{\frac{1}{p(\varepsilon)}} t
+\left|\left\|\nabla f\cdot\xi\right\|_{L^{p(\varepsilon)}}
-\left\|\nabla f\cdot\xi\right\|_{L^n}\right|.
\end{align*}
Letting $t,\varepsilon\to0+$, we obtain \eqref{Th1-e3}.
\end{proof}

\section{Sharp Besov inequalities involving Euclidean norms}\label{Section4}

\subsection{Sharp Besov inequalities in the critical case}

Let $n \geq 2$. It is well known that $B^{n/p}_{p, n}$, the classical Besov space given by \eqref{ClassicB}, is not embedded into $L^\infty$ but it is (locally) embedded into $L^q, \, q < \infty$. That is, there exists a constant $c = c(n, p, q, |\text{supp } f|)$ such that
\begin{equation}\label{EmbBBM}
	\|f\|_{L^q} \leq c \|f\|_{B^{n/p}_{p, n}}
\end{equation}
for every $f \in B^{n/p}_{p, n}$ with $|\text{supp } f| < \infty$. However, the sharp value of the constant $c$ is unknown. The following result gives a sharp version of \eqref{EmbBBM} in terms of the behaviour of $c$ with respect to both integrability parameters $p$ and $q$.

\begin{thm}\label{ThmEmbBesov1}
Let\footnote{The unessential constant $p_0$ indicates that only values $p \to n+$ and $q \to \infty$ are of some interest.} $2 \leq n < p \leq p_0 \leq  q < \infty$. Then there exists a constant $c_n$, depending only on $n$, such that
	\begin{equation}\label{PropBBM}
		\|f\|_{L^q} \leq c_n |\emph{supp } f|^{1/q}    q^{1/n'} \Big(1-\frac{n}{p} \Big)^{1/n} \|f\|_{B^{n/p}_{p, n}}
	\end{equation}
	for every $f \in B^{n/p}_{p, n}$ with $|\emph{supp } f| < \infty$.
\end{thm}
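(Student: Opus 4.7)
The plan is to establish the sharp $L^q$ embedding via a Littlewood--Paley-style dyadic decomposition of $f$, combined with Hölder's inequality applied in the dyadic frequency sum. I would write $f=\sum_{j\in\Z}f_j$ with $f_j$ localized at scale $2^{-j}$, use the equivalent norm
\begin{equation*}
\|f\|_{B^{n/p}_{p,n}}^n\approx\sum_{j\in\Z}\bigl(2^{jn/p}\|f_j\|_{L^p}\bigr)^n,
\end{equation*}
and invoke Bernstein's inequality $\|f_j\|_{L^q}\lesssim 2^{jn(1/p-1/q)}\|f_j\|_{L^p}$ for $q\geq p$.

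Split the dyadic sum at the scale $2^{-j_0}\approx|\mathrm{supp}\,f|^{1/n}$. For $j\geq j_0$ (small scales), apply Hölder's inequality with exponents $n'$ and $n$:
\begin{equation*}
\sum_{j\geq j_0}\|f_j\|_{L^q}\lesssim\sum_{j\geq j_0}2^{-jn/q}\bigl(2^{jn/p}\|f_j\|_{L^p}\bigr)\leq\Bigl(\sum_{j\geq j_0}2^{-jnn'/q}\Bigr)^{1/n'}\|f\|_{B^{n/p}_{p,n}}.
\end{equation*}
The geometric series equals $2^{-j_0nn'/q}/(1-2^{-nn'/q})\sim(q/(nn'\log 2))|\mathrm{supp}\,f|^{n'/q}$, so taking the $1/n'$-power simultaneously produces the factors $q^{1/n'}$ and $|\mathrm{supp}\,f|^{1/q}$.

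For $j<j_0$ (coarse scales), the compactly supported structure allows $\|f_j\|_{L^q}\lesssim|\mathrm{supp}\,f|^{1/q-1/p}\|f_j\|_{L^p}$, and summing over the absolutely convergent range reduces the problem to bounding $\|f\|_{L^p}$ in terms of $\|f\|_{B^{n/p}_{p,n}}$. The desired BBM-type factor $(1-n/p)^{1/n}$ then arises from the singular behavior of the Besov seminorm as $s=n/p\to 1-$: by evaluating the tail $\int_{|\mathrm{supp}\,f|^{1/n}}^\infty[t^{-n/p}\omega(f,t)_p]^n\,dt/t$, which bounds $\|f\|_{L^p}^n$ from below via the disjoint-support estimate $\omega(f,t)_p\gtrsim\|f\|_{L^p}$ for $t\gtrsim\mathrm{diam}(\mathrm{supp}\,f)$, and matching constants with the directional BBM asymptotic \eqref{HLLimit2} at $p=q=n$.

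The principal technical obstacle is precisely the sharp extraction of the $(1-n/p)^{1/n}$ factor. A naive Besov-to-$L^p$ bound yields only a $p$-uniform constant that does not vanish as $p\to n+$, so a cleaner argument would bypass the intermediate $L^p$ step, treating coarse scales directly via Hölder on the full integral form of the Besov norm with cut-off at $t\approx|\mathrm{supp}\,f|^{1/n}$, and balancing the singularity at $s=1$ against the support size. Ensuring that this balancing yields a single dimensional constant $c_n$ uniformly for $p\in(n,p_0]$ and $q\in[p_0,\infty)$ is expected to be the most delicate part of the argument, and likely explains the paper's reliance on a wavelet-and-$K$-functional framework rather than a bare Littlewood--Paley splitting.
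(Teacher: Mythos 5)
Your dyadic decomposition and the split at $j_0$ with $2^{-j_0 n}\approx|\mathrm{supp}\,f|$ have the same skeleton as the paper's proof (the paper works with compactly supported wavelets rather than Littlewood--Paley frequency projections, which is important: for $j<j_0$ the paper exploits that only $O_n(1)$ wavelet coefficients per scale are nonzero, which is a support localization property that smooth frequency cutoffs do not enjoy — so your bound $\|f_j\|_{L^q}\lesssim|\mathrm{supp}\,f|^{1/q-1/p}\|f_j\|_{L^p}$ for LP pieces would need justification, though it is morally fine). Your Hölder step for $j\geq j_0$ correctly produces the $q^{1/n'}|\mathrm{supp}\,f|^{1/q}$ factor.

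The genuine gap is your account of where $(1-\frac{n}{p})^{1/n}$ comes from. After your Hölder step you have $\|f\|_{L^q}\lesssim q^{1/n'}|\mathrm{supp}\,f|^{1/q}\cdot\bigl(\sum_j(2^{jn/p}\|f_j\|_{L^p})^n\bigr)^{1/n}$, i.e.\ a dyadic $\ell^n$ Besov norm. That quantity and the modulus-of-smoothness norm $\|f\|_{B^{n/p}_{p,n}}$ in the statement are equivalent, but only with $p$-dependent constants, and the entire content of \eqref{PropBBM} is that the dyadic norm is bounded by $(1-\frac{n}{p})^{1/n}\|f\|_{B^{n/p}_{p,n}}$ — the dyadic norm genuinely \emph{gains} a vanishing factor over the modulus-of-smoothness norm as $p\to n+$. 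Your proposed sources for this factor do not produce it: cutting the modulus-of-smoothness integral at $t\approx|\mathrm{supp}\,f|^{1/n}$ only drops a nonnegative tail and gives no small factor; the lower bound $\omega(f,t)_p\gtrsim\|f\|_{L^p}$ for $t$ beyond the diameter, fed into the tail $\int_a^\infty t^{-n^2/p}\|f\|_{L^p}^n\,dt/t$, yields $\|f\|_{L^p}^n\lesssim(p/n^2)\,a^{n^2/p}\,\|f\|_{B^{n/p}_{p,n}}^n$, whose constant stays bounded away from zero as $p\to n+$; and matching with \eqref{HLLimit2} is a directional BBM limit for $\Pi^{*,s}_{p,q}f$, not a quantitative inequality of the type needed. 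The paper extracts the sharp factor by passing to the $K$-functional for the discrete pair $(\ell^p,\ell^p(2^j))$, writing $K(t,\lambda)\gtrsim\bigl(\sum_{j,m}(\min\{1,t2^j\}|\lambda_{jm}|)^p\bigr)^{1/p}$, computing the $(\theta,n)$-interpolation norm with $\theta=n/p$, and running a sharp discrete Hardy argument (the chain leading to \eqref{PBBM9}) whose constant is exactly $\approx(1-\frac{n}{p})^{-1/p}\approx(1-\frac{n}{p})^{-1/n}$. You flag this step as the delicate part, but your proposal does not actually supply an argument for it; without something playing the role of the discrete Hardy inequality the claimed factor does not appear.
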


Inequality \eqref{PropBBM} reveals that  $\|f\|_{L^q}$ increases as $q^{1/n'}$ and decreases as $(1-\frac{n}{p})^{1/n}$ provided that $q \to \infty$ and $p \to n+$, respectively. The proof relies on a variety of techniques, including wavelets and interpolation theory.

\begin{proof}[Proof of Theorem \ref{ThmEmbBesov1}]
Let $\{\psi_{j m} : j \in \Z, m \in \Z^n\} = \{\psi_{j m}\}$ be the system generated by the (\emph{finite}) wavelet set $\Psi$, specifically, $\psi_{j m}(x) = 2^{j n/2} \psi(2^j x - m), \, \psi \in \Psi$. Without loss of generality, we may assume that every $\psi$ is supported on $Q_0$, the unit cube, and $\{\psi_{j m}\}$ forms an orthonormal basis in $L^2$. In particular, the function $f$ can be decomposed (at leat, in the distributional sense) as
\begin{equation}\label{PBBM0}
	f = \sum_{\psi \in \Psi} \sum_{j=-\infty}^\infty \sum_{m \in \Z^n} \lambda_{j m}(f) \psi_{j m},
\end{equation}
where
$$\lambda_{j m}(f) =  \int_{\R^n} f(x) \psi_{j m}(x) \, dx,$$
the wavelet coefficients of $f$.

Next we estimate the $L^q$-norm of  $f$ given by \eqref{PBBM0}.  Indeed, using the triangle inequality and the fact that, for every fixed $j$, the wavelets $\{\psi_{j m}: m \in \Z^n\}$ have pairwise disjoint supports, we obtain\footnote{The index set $\psi \in \Psi$ does not play any role in our arguments and can be safely omitted.}
\begin{equation}
	\|f\|_{L^q}  \leq \sum_{j=-\infty}^\infty \bigg\|\sum_{m \in \Z^n} \lambda_{j m}(f) \psi_{j m} \bigg\|_{L^q} = \sum_{j=-\infty}^\infty  \bigg(\sum_{m \in \Z^n} |\lambda_{j m}(f)|^q \|\psi_{j m}\|_{L^q}^q \bigg)^{1/q}. \label{PBBM0**}
\end{equation}
Further, a simple change of variables together with the fact that $\|\psi\|_{L^q} \approx 1$ (with underlying absolute constants) lead to
\begin{align*}
	\|\psi_{j m}\|_{L^q} & = 2^{j n/2} \bigg(\int_{\R^n} |\psi(2^j x - m)|^q \, dx \bigg)^{1/q}  \\
	&= 2^{j n/2} 2^{-j n/q} \|\psi\|_{L^q}  \lesssim 2^{j n/2} 2^{-j n/q} \\
	& \lesssim 2^{j n/2} 2^{-j n/q} \|\psi\|_{L^p}  \\
	& = 2^{j n/2} 2^{-j n/q} 2^{j n/p}  \bigg(\int_{\R^n} |\psi(2^j x - m)|^p \, dx \bigg)^{1/p}  \\
	& = 2^{j n (1/p - 1/q)} \|\psi_{j m}\|_{L^p}.
\end{align*}
Inserting this estimate into \eqref{PBBM0**} and using that $p \leq q$, we get
\begin{align*}
	\|f\|_{L^q} &\lesssim   \sum_{j=-\infty}^\infty 2^{j n(\frac{1}{p}-\frac{1}{q})}  \bigg(\sum_{m \in \Z^n} |\lambda_{j m}(f)|^q  \|\psi_{j m}\|_{L^p}^q \bigg)^{1/q} \\
	& \leq  \sum_{j=-\infty}^\infty 2^{j n(\frac{1}{p}-\frac{1}{q})}  \bigg(\sum_{m \in \Z^n} |\lambda_{j m}(f)|^p  \|\psi_{j m}\|_{L^p}^p \bigg)^{1/p} \\
	& \lesssim  \sum_{j=-\infty}^\infty 2^{j n(\frac{1}{2}-\frac{1}{q})} \bigg(\sum_{m \in \Z^n} |\lambda_{j m}(f)|^p   \bigg)^{1/p}.
\end{align*}
Choose $j_0 \in \Z$ such that $|\text{supp } f| \approx 2^{-j_0 n}$. The previous computations show that
\begin{align}
	\|f\|_{L^q} &\lesssim   \sum_{j=-\infty}^{j_0} 2^{j n(\frac{1}{2}-\frac{1}{q})} \bigg(\sum_{m \in \Z^n} |\lambda_{j m}(f)|^p   \bigg)^{1/p} +   \sum_{j=j_0}^\infty 2^{j n(\frac{1}{2}-\frac{1}{q})} \bigg(\sum_{m \in \Z^n} |\lambda_{j m}(f)|^p   \bigg)^{1/p} \nonumber \\
	& =: \mathcal{J}_1+ \mathcal{J}_2. \label{341n}
\end{align}

To estimate $\mathcal{J}_2$, we can apply  H\"older's inequality with exponent $n$ to obtain
\begin{align}
	\mathcal{J}_2  &\lesssim \left( \sum_{j=j_0}^\infty 2^{j n^2/2} \bigg(\sum_{m \in \Z^n} |\lambda_{j m}(f)|^p   \bigg)^{n/p} \right)^{1/n} \left(\sum_{j=j_0}^\infty 2^{-j n n'/q} \right)^{1/n'} \nonumber \\
	& \approx q^{1/n'} 2^{-j_0 n/q}  \left( \sum_{j=j_0}^\infty  \bigg(\sum_{m \in \Z^n} |2^{j n/2} \lambda_{j m}(f)|^p   \bigg)^{n/p} \right)^{1/n} \nonumber \\
	& \lesssim q^{1/n'} |\text{supp } f|^{1/q} \|f\|_{\mathfrak{B}^{n/p}_{p, n}}, \label{342n}
\end{align}
where
$$
	\|f\|_{\mathfrak{B}^s_{p, r}} := \left(\sum_{j=-\infty}^\infty 2^{j (s-n/p) r} \bigg(\sum_{m \in \Z^n} |2^{j n/2} \lambda_{j m}(f)|^p   \bigg)^{r/p} \right)^{1/r}.
$$
It is worthwhile to mention that, in sharp  contrast with the Besov norm $\|f\|_{B^s_{p, r}}$ given by moduli of smoothness (cf. \eqref{ClassicB}), the wavelet norm $\|f\|_{\mathfrak{B}^s_{p, r}}$  makes sense for any $s \in \R$.

Next we focus on $\mathcal{J}_1$. Given $j \leq j_0$, we observe that the cardinality of the set $\{m \in \Z^n : \lambda_{j m}(f) \neq 0\}$ depends only on $n$. Without loss of generality, we may assume that there exists $m_j \in \Z^n$ such that $\lambda_{j m}(f) =0$ for all $m \in \Z^n, m  \neq m_j$, and $\lambda_{j m_j}(f) = 2^{j n/2} \int_{\R^n} f$.  Therefore (recall that $q > 2$)
\begin{align}
	\mathcal{J}_1 &= \sum_{j=-\infty}^{j_0} 2^{j n (\frac{1}{2}-\frac{1}{q})}  |\lambda_{j m_j}(f)| \nonumber   \lesssim \bigg( \sum_{j=-\infty}^{j_0} 2^{j n (\frac{1}{2}-\frac{1}{q})} \bigg) \sup_{j \leq j_0}  |\lambda_{j m_j}(f)| \nonumber  \\
	& \lesssim \bigg(\frac{1}{2}-\frac{1}{q} \bigg)^{-1} 2^{j_0 n (\frac{1}{2}-\frac{1}{q})} \sup_{j \leq j_0}  2^{\frac{j n}{2}} \bigg|\int_{\R^n} f \bigg| \nonumber \\
	&  \approx 2^{j_0 n (1-\frac{1}{q})} \bigg|\int_{\R^n} f \bigg| \approx |\text{supp } f|^{\frac{1}{q}}  2^{\frac{j_0 n}{2}} |\lambda_{j_0 m_{j_0}} (f)| \nonumber  \\
	& \leq  |\text{supp } f|^{\frac{1}{q}} \|f\|_{\mathfrak{B}^{\frac{n}{p}}_{p, n}}. \label{343n}
\end{align}

As a consequence of \eqref{341n},  \eqref{342n} and \eqref{343n}, we achieve
\begin{equation}\label{344n}
	\|f\|_{L^q} \lesssim q^{1/n'} |\text{supp } f|^{1/q} \|f\|_{\mathfrak{B}^{n/p}_{p, n}}.
\end{equation}

%
	
	It is well known (see e.g. \cite{Triebel08}) that $B^{n/p}_{p, n}$ given by \eqref{ClassicB} can be equivalently characterized in terms of wavelets and
	$$
		\|f\|_{B^{n/p}_{p, n}} \approx \|f\|_{\mathfrak{B}^{n/p}_{p, n}}.
	$$
	However, there are equivalence constants depending on $p$ and $n$. The behaviour of these constants are essential in our arguments. Then we will show that
	\begin{equation}\label{PBBM3}
		 \|f\|_{\mathfrak{B}^{n/p}_{p, n}} \lesssim  \Big(1-\frac{n}{p} \Big)^{1/n} \|f\|_{B^{n/p}_{p, n}}
	\end{equation}
	as $p \to n+$. Assuming momentarily the validity of such a claim,  the desired estimate \eqref{PropBBM} comes from \eqref{344n} and \eqref{PBBM3}.
	
	The proof of \eqref{PBBM3} makes use of some interpolation techniques. For the benefit of the non-expert reader in interpolation theory, we briefly recall the construction of the real interpolation method. Recall that the \emph{$K$-functional} relative to a (compatible\footnote{Loosely speaking, this means that $A_0 + A_1$ makes sense.}) Banach pair $(A_0, A_1)$ is defined by
	\begin{equation}\label{DefKFunct}
		K(t, f; A_0, A_1) =  \|f\|_{A_0 + t A_1}= \inf_{f = f_0 + f_1} (\|f_0\|_{A_0} + t \|f_1\|_{A_1})
	\end{equation}
	for every $t > 0$ and $f \in A_0+A_1$.  Let $\theta \in (0, 1)$ and $r \in [1, \infty)$. The \emph{real interpolation space} $(A_0, A_1)_{\theta, r}$ is the set of all those $f \in A_0 + A_1$ for which
	\begin{equation}\label{DefInt}
		\|f\|_{(A_0, A_1)_{\theta, r}} = \bigg[\int_0^\infty (t^{-\theta} K(t, f; A_0, A_1))^r \, \frac{dt}{t} \bigg]^{\frac{1}{r}} < \infty.
	\end{equation}
	For further details we refer the reader to e.g. \cite{BerghLofstrom, DeVoreLorentz}.

	The $K$-functional is an useful tool that can be characterized, for some classical Banach pairs, in terms of more familiar objects in analysis. For instance, it is well known that, for every given $p \in [1, \infty]$,
	\begin{equation}\label{EquivKfunctMod}
		K(t, f; L^p, \dot{W}^1_p) \approx \tilde{\omega}(f, t)_p:=  \sup_{|h| < t} \|\Delta_h f\|_{L^p},
	\end{equation}
	where  the hidden equivalence constants depend only on $n$ (see e.g. the explicit computations carried out in \cite{JohnenScherer}). Moreover, by \eqref{EquivMod} with $K = B^n$,
	$$
		  \tilde{\omega}(f, t)_p \leq 4 (2^n + 1)^{\frac{1}{p}} \omega(f, t)_p.
	$$
	As a byproduct, there exists a constant $c_n$, depending only on $n$, such that
	\begin{equation*}
		\|f\|_{(L^p, \dot{W}^1_p)_{n/p, n}} \leq c_n  \bigg(\int_0^\infty  [t^{-n/p} \omega(f, t)_p]^n \, \frac{dt}{t} \bigg)^{1/n}  = c_n \|f\|_{B^{n/p}_{p, n}}.
	\end{equation*}
	Hence the proof of \eqref{PBBM3} can be reduced to show that
	\begin{equation}\label{IntGoal}
		 \|f\|_{\mathfrak{B}^{n/p}_{p, n}} \lesssim  \Big(1-\frac{n}{p} \Big)^{1/n} \|f\|_{(L^p, \dot{W}^1_p)_{n/p, n}}  \qquad \text{as} \qquad p \to n+.
	\end{equation}
	
	 Recall that (note that $p \geq 2$) (cf. \cite[Proposition 2(iii), p. 47]{Triebel})
	$$
		L^p \hookrightarrow \mathfrak{B}^0_{p, p} \qquad \text{and} \qquad \dot{W}^1_p \hookrightarrow \mathfrak{B}^1_{p, p}
	$$
	with embedding constants independent of $p$ as $p \to n+$, and thus (cf. \eqref{DefInt})
	\begin{equation}\label{PBBM5}
		 \|f\|_{(\mathfrak{B}^0_{p, p}, \mathfrak{B}^1_{p, p})_{n/p, n}} \lesssim \|f\|_{(L^p, \dot{W}^1_p)_{n/p, n}}.
	\end{equation}
	Furthermore, $\mathfrak{B}^0_{p, p}$ and $\mathfrak{B}^1_{p, p}$  are isomorphic (with corresponding constants again independent of $p$) to $\ell^p (\Z \times \Z^n) =\ell^p$ and $\ell^p(\Z \times \Z^n; 2^j)=\ell^p(2^j)$, respectively,  via the wavelet transform
	\begin{equation}\label{WavTrans}
	f \mapsto \{2^{j n (\frac{1}{2}-\frac{1}{p})} \lambda_{j m}(f)\}.
	\end{equation}
	Here  $$\|\lambda\|_{\ell^p (2^{j \eta})} = \bigg(\sum_{j=-\infty}^\infty \sum_{m\in \Z^n} 2^{j \eta p } |\lambda_{j m}|^p\bigg)^{\frac{1}{p}}, \qquad \lambda = \{\lambda_{j m}\}, \qquad \eta = 0, 1.$$
 Therefore computability of the space $(\mathfrak{B}^0_{p, p}, \mathfrak{B}^1_{p, p})_{n/p, n}$ turns out to be equivalent to $(\ell^p, \ell^p (2^j))_{n/p, n}$. More precisely, we have
	\begin{equation}\label{PBBM6}
		\|f\|_{(\mathfrak{B}^0_{p, p}, \mathfrak{B}^1_{p, p})_{n/p, n}} \approx \|\{2^{j n (\frac{1}{2}-\frac{1}{p})} \lambda_{j m}(f)\}\|_{(\ell^p, \ell^p (2^j))_{n/p, n}},
	\end{equation}
	where the equivalence constants are independent of $p$.
	
	Next we estimate the $K$-functional for $(\ell^p, \ell^p (2^j))$. Let $t >0$ and $\lambda \in \ell^p + \ell^p(2^j)$. Since $p > 1$,  we have
	\begin{align}
		K(t, \lambda; \ell^p, \ell^p(2^j))^p &\geq \inf_{\lambda = \lambda^0 + \lambda^1} (\|\lambda^0\|^p_{\ell^p} + t^p \|\lambda^1\|^p_{\ell^p(2^j)}) \nonumber \\
		& \geq  \sum_{j=-\infty}^{\infty} \sum_{m \in \Z^n} \inf_{\lambda_{j m} = \lambda^0_{j m} + \lambda^1_{j m}}  (|\lambda_{j m}^0|^p + t^p 2^{j p} |\lambda^1_{j m}|^p)  \nonumber  \\
		& \geq 2^{-p} \sum_{j=-\infty}^{\infty} \sum_{m \in \Z^n} \Big[ \inf_{\lambda_{j m} = \lambda^0_{j m} + \lambda^1_{j m}}  (|\lambda_{j m}^0| + t 2^{j} |\lambda^1_{j m}|) \Big]^p. \label{NK1}
	\end{align}
	Furthermore, elementary computations lead to the minimization of the following variational problem
	$$
		\inf_{x = x^0 + x^1} (|x^0| + t |x^1|) = \min\{1, t\} |x|, \qquad x \in \R.
	$$
	Applying the latter formula (with $x = \lambda_{j m}$)  into \eqref{NK1} we get
	\begin{equation}\label{NK2}
		K(t, \lambda; \ell^p, \ell^p(2^j)) \geq 2^{-1} \bigg( \sum_{j=-\infty}^{\infty} \sum_{m \in \Z^n} (\min\{1, t 2^j\} |\lambda_{j m}|)^p  \bigg)^{\frac{1}{p}}.
	\end{equation}

	In light of \eqref{NK2}, the interpolation norm $\|\lambda\|_{(\ell^p, \ell^p(2^j ))_{n/p, n}}$ can be estimated as follows:
	\begin{align}
		\|\lambda\|_{(\ell^p, \ell^p(2^j ))_{n/p, n}} & = \bigg(\int_0^\infty [t^{-n/p} K(t, \lambda; \ell^p, \ell^p(2^j))]^n \, \frac{dt}{t} \bigg)^{1/n} \nonumber \\
		& \approx \bigg(\sum_{l=-\infty}^\infty [2^{l n/p} K(2^{-l}, \lambda; \ell^p, \ell^p (2^j))]^n \bigg)^{1/n} \nonumber \\
		& \approx \bigg(\sum_{l=-\infty}^\infty \bigg[2^{l n} \sum_{j=-\infty}^{\infty} \sum_{m \in \Z^n} (\min\{1,  2^{j-l}\} |\lambda_{j m}|)^p \bigg]^{n/p} \bigg)^{1/n} \nonumber \\
		& \approx \bigg(\sum_{l=-\infty}^\infty \bigg[2^{l (n-p)} \sum_{j=-\infty}^{l} \sum_{m \in \Z^n} 2^{j p} |\lambda_{j m}|^p \bigg]^{n/p} \bigg)^{1/n} \nonumber \\
		& \hspace{1cm} +  \bigg(\sum_{l=-\infty}^\infty \bigg[2^{l n}  \sum_{j =l+1}^\infty \sum_{m \in \Z^n}  |\lambda_{j m}|^p \bigg]^{n/p} \bigg)^{1/n} \nonumber\\
		& = : \mathcal{I}_1 + \mathcal{I}_2. \label{PBBM7}
	\end{align}
		It is clear that
	\begin{equation}\label{PBBM8}
		\mathcal{I}_2 \geq 2^{-n/p}  \left(\sum_{l=-\infty}^\infty \bigg[2^{l n}  \sum_{m \in \Z^n} |\lambda_{l m}|^p \bigg]^{n/p} \right)^{1/n} \geq 2^{-1} \|\lambda\|_{\ell^n  (2^{l n/p} \ell^p)},
	\end{equation}
	where
	$$
	\|\lambda\|_{\ell^n  (2^{l n/p} \ell^p)} = \bigg(\sum_{l=-\infty}^\infty 2^{\frac{l n^2}{p}} \bigg(\sum_{m \in \Z^n} |\lambda_{l m}|^p \bigg)^{\frac{n}{p}}  \bigg)^{\frac{1}{n}}.
	$$
	On the other hand, it is also obvious that
	$$
		\mathcal{I}_1 \geq \left(\sum_{l=-\infty}^\infty \bigg[2^{l n}  \sum_{m \in \Z^n} |\lambda_{l m}|^p \bigg]^{n/p} \right)^{1/n} = \|\lambda\|_{\ell^n (2^{l n/p} \ell^p)}.
	$$
	However, the latter estimate is very rough and can be improved to say that
	\begin{equation}\label{PBBM9}
		\mathcal{I}_1 \geq \bigg(1-\frac{n}{p} \bigg)^{-1/p} \|\lambda\|_{\ell^n  (2^{l n/p} \ell^p)}.
	\end{equation}
	 Note that $(1-\frac{n}{p})^{-\frac{1}{p}} \to \infty$ as $p \to n+$. Let us show \eqref{PBBM9}: Setting
	$$
		A_\nu = \sum_{j=-\infty}^{\nu} 2^{j p} \sum_{m \in \Z^n} |\lambda_{j m}|^p, \qquad \nu \in \Z,
	$$
	we have (recall that $p > n$)
	\begin{equation*}
		A_\nu^{\frac{n}{p}}-A_{\nu-1}^{\frac{n}{p}} = \frac{p}{n}  \int_{A_{\nu-1}}^{A_\nu} t^{\frac{n}{p}} \, \frac{dt}{t} \geq \frac{p}{n} A_\nu^{\frac{n}{p} -1} (A_\nu-A_{\nu-1})  = \frac{p}{n} A_\nu^{\frac{n}{p} -1}  2^{\nu p} \sum_{m \in \Z^n} |\lambda_{\nu m}|^p.
	\end{equation*}
	Summing up over $\nu \leq l$, we arrive at
	$$
		 \bigg(\sum_{j=-\infty}^{l} 2^{j p}  \sum_{m \in \Z^n} |\lambda_{j m}|^p \bigg)^{\frac{n}{p}}  \geq \frac{p}{n} \sum_{\nu=-\infty}^l  \bigg(\sum_{j=-\infty}^{\nu} 2^{j p}  \sum_{m \in \Z^n} |\lambda_{j m}|^p \bigg)^{\frac{n}{p} -1}  2^{\nu p}  \sum_{m \in \Z^n} |\lambda_{\nu m}|^p.
	$$
	Accordingly, changing the order of summation, we obtain
	\begin{align*}
		\mathcal{I}_1 & \geq \bigg(\sum_{l=-\infty}^\infty 2^{\frac{l (n-p)n}{p}} \sum_{\nu=-\infty}^l  \bigg(\sum_{j=-\infty}^{\nu} 2^{j p}  \sum_{m \in \Z^n} |\lambda_{j m}|^p \bigg)^{\frac{n}{p} -1}  2^{\nu p}  \sum_{m \in \Z^n} |\lambda_{\nu m}|^p \bigg)^{\frac{1}{n}}  \\
		& = \bigg(\sum_{\nu=-\infty}^\infty \bigg(\sum_{j=-\infty}^{\nu} 2^{j p}  \sum_{m \in \Z^n} |\lambda_{j m}|^p \bigg)^{\frac{n}{p} -1}  2^{\nu p}  \sum_{m \in \Z^n} |\lambda_{\nu m}|^p  \sum_{l=\nu}^\infty 2^{\frac{l (n-p)n}{p}}  \bigg)^{\frac{1}{n}} \\
		& \approx \bigg(\frac{p}{(p-n)n} \bigg)^{\frac{1}{n}}  \bigg(\sum_{\nu=-\infty}^\infty \bigg(\sum_{j=-\infty}^{\nu} 2^{j p}  \sum_{m \in \Z^n} |\lambda_{j m}|^p \bigg)^{\frac{n}{p} -1}  2^{\nu p}  \sum_{m \in \Z^n} |\lambda_{\nu m}|^p  2^{\frac{\nu(n-p) n}{p}} \bigg)^{\frac{1}{n}},
		\end{align*}
		where we have also used that $p > n$ in the last step. If we  apply now reverse H\"older's inequality with exponent $n/p < 1$, then
		\begin{align*}
		\mathcal{I}_1&  \gtrsim \bigg(\frac{p}{(p-n)n} \bigg)^{\frac{1}{n}} \left(\sum_{\nu=-\infty}^\infty \bigg[2^{\nu n}  \sum_{m \in \Z^n} |\lambda_{\nu m}|^p\bigg]^{\frac{n}{p}} \right)^{\frac{p}{n^2}} \left(\sum_{\nu=-\infty}^\infty 2^{\frac{\nu(n-p) n}{p}}   \bigg[\sum_{j=-\infty}^{\nu} 2^{j p}  \sum_{m \in \Z^n} |\lambda_{j m}|^p \bigg]^{\frac{n}{p}}   \right)^{\frac{n-p}{n^2}} \\
		& = \bigg(\frac{p}{(p-n)n} \bigg)^{\frac{1}{n}} \left(\sum_{\nu=-\infty}^\infty \bigg[2^{\nu n}  \sum_{m \in \Z^n} |\lambda_{\nu m}|^p\bigg]^{\frac{n}{p}} \right)^{\frac{p}{n^2}} \mathcal{I}_1^{\frac{n-p}{n}}.
	\end{align*}
	Hence
	$$
		\mathcal{I}_1 \gtrsim  \bigg(\frac{p}{(p-n)n} \bigg)^{\frac{1}{p}} \left(\sum_{\nu=-\infty}^\infty \bigg[2^{\nu n}  \sum_{m \in \Z^n} |\lambda_{\nu m}|^p\bigg]^{\frac{n}{p}} \right)^{\frac{1}{n}},
	$$
	i.e., \eqref{PBBM9} holds.
	
	Inserting \eqref{PBBM8} and \eqref{PBBM9} into \eqref{PBBM7}, we get
	$$
		\|\lambda\|_{(\ell^p , \ell^p(2^j))_{n/p, n}}  \gtrsim \bigg(1-\frac{n}{p} \bigg)^{-1/p}  \|\lambda\|_{\ell^n  (2^{j n/p} \ell^p)}.
	$$
	Using the fact that $\mathfrak{B}^{n/p}_{p, n}$ is isomorphic to $\ell^n  (2^{j n/p} \ell^p)$ through \eqref{WavTrans} (with equivalence constants independent of $p$), from \eqref{PBBM6} we achieve that
	\begin{equation}\label{PBBM10}
		\|f\|_{(\mathfrak{B}^0_{p, p}, \mathfrak{B}^1_{p, p})_{n/p, n}} \gtrsim \bigg(1-\frac{n}{p} \bigg)^{-1/p}  \|f\|_{\mathfrak{B}^{n/p}_{p, n}}.
	\end{equation}
	Then, by  \eqref{PBBM5} and \eqref{PBBM10},
	$$
		\bigg(1-\frac{n}{p} \bigg)^{-1/p}  \|f\|_{\mathfrak{B}^{n/p}_{p, n}} \lesssim \|f\|_{(L^p, \dot{W}^1_p)_{n/p, n}}.
	$$
	Noting that
	$$
		\bigg(1-\frac{n}{p} \bigg)^{1/p} \approx \bigg(1-\frac{n}{p} \bigg)^{1/n} \qquad \text{as} \qquad p \to n+,
	$$
	 the proof of \eqref{IntGoal} (and hence \eqref{PBBM3}) is finished.
\end{proof}

The applicability of our method requires sharp estimate for the Besov norm given in the right-hand side of \eqref{PropBBM} in terms of standard Sobolev norms. This can be accomplished via the well-known Franke--Jawerth embeddings (see e.g. \cite[(10.2), p. 83]{Kolyada})
$$
	\dot{W}^1_n \hookrightarrow B^{n/p}_{p, n}, \qquad 2 \leq n < p < \infty.
$$
That is,  there exists $c_{n, p} >0$, depending on $n$ and $p$, such that
$$
	\|f\|_{B^{n/p}_{p, n}} \leq c_{n, p} \,  \|\nabla f\|_{L^n}.
$$
However, for our purposes, this inequality is not strong enough  and the optimal blow-up of $c_{n, p}$ as $p \to n+$ is needed\footnote{$\lim_{p \to n +} \|f\|_{B^{n/p}_{p, n}} < \infty \implies f \equiv \text{constant}$.}. This is the content of the following result.

\begin{thm}\label{ThmSharpIlin}
	Let $2 \leq n < p <p_0 < \infty$. Then there exists a constant $c_{n, p_0}$, depending only on $n$ and $p_0$, such that
	$$
		 \|f\|_{B^{n/p}_{p, n}} \leq c_{n, p_0}  \Big(1-\frac{n}{p} \Big)^{-1/n} \,  \|\nabla f\|_{L^n}, \qquad \forall f \in \dot{W}^{1}_n.
	$$
\end{thm}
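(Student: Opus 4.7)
The inequality is (the counterpart of) a Franke--Jawerth embedding with sharp tracking of the blow-up of the constant as $p\to n+$. I would combine Littlewood--Paley (LP) decomposition with real interpolation and a discrete Hardy-type estimate, in the spirit of the proof of Theorem~\ref{ThmEmbBesov1}.

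\emph{Step 1 (Reduction to a $K$-functional).} By the Johnen--Scherer type equivalence $K(t,f;L^p,\dot W^1_p)\approx \omega(f,t)_p$ used in \eqref{EquivKfunctMod} (with constants depending only on $n$), one has
\[
\|f\|_{B^{n/p}_{p,n}}^n \approx \sum_{J\in\Z} \big(2^{Jn/p}\,K(2^{-J},f;L^p,\dot W^1_p)\big)^n.
\]
Hence the problem is reduced to controlling $\{2^{Jn/p}K(2^{-J},f)\}_J$ in $\ell^n$ by $c_{n,p_0}(1-n/p)^{-1/n}\|\nabla f\|_{L^n}$.

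\emph{Step 2 (Estimating the $K$-functional via LP).} Take the LP decomposition $f=\sum_j f_j$ and for each $J$ the natural splitting $f=P_{\geq J}f+P_{<J}f$. Bernstein's inequality (with constants depending only on $n$) gives
\[
\|f_j\|_{L^p}\lesssim 2^{-jn/p}\,a_j,\qquad \|\nabla f_j\|_{L^p}\lesssim 2^{j(1-n/p)}\,a_j,\qquad a_j:=\|\nabla f_j\|_{L^n}.
\]
Summing in $j$ and writing $\beta:=1-n/p$ yields
\[
2^{Jn/p}K(2^{-J},f;L^p,\dot W^1_p)\lesssim \sum_{k\geq 0} 2^{-kn/p}\,a_{J+k}+\sum_{k\geq 1} 2^{-k\beta}\,a_{J-k}.
\]

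\emph{Step 3 (Sharp discrete Hardy estimate).} Raising to the $n$-th power and summing in $J$, I aim at
\[
\sum_{J\in\Z}\big(2^{Jn/p}K(2^{-J},f)\big)^n \lesssim \beta^{-1}\|a\|_{\ell^n}^n.
\]
The first convolution is tame: the kernel $\{2^{-kn/p}\}_{k\geq 0}$ has $\ell^1$-norm bounded as $p\to n+$, so Young's inequality yields an $\ell^n$-bound proportional to $\|a\|_{\ell^n}$ with constant independent of $p$. For the second convolution a naive application of Young's inequality gives only the much weaker bound $\beta^{-n}$. To extract the sharp exponent I would mimic, in the upper-bound direction, the Hardy-type manipulation of the partial sums $A_L=\sum_{j\leq L}2^{jp}s_j$ employed in the proof of \eqref{PBBM9}: using the subadditivity $(x+y)^{n/p}\leq x^{n/p}+y^{n/p}$ (valid since $p>n$), together with a change of summation order and the concavity-based bound on the geometric weights.

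\emph{Step 4 (From LP coefficients to $\|\nabla f\|_{L^n}$).} By the Littlewood--Paley characterisation of $\dot W^1_n=\dot F^1_{n,2}$ and the pointwise embedding $\ell^n\hookrightarrow\ell^2$ valid for $n\geq 2$ (giving $\sum_j b_j^n\leq (\sum_j b_j^2)^{n/2}$ for $b_j\geq 0$), one obtains
\[
\|a\|_{\ell^n}^n=\sum_j\|\nabla f_j\|_{L^n}^n=\int \sum_j |\nabla f_j|^n\,dx \leq \int \Big(\sum_j |\nabla f_j|^2\Big)^{n/2}dx \approx \|\nabla f\|_{L^n}^n.
\]
Chaining Steps~1--4 gives $\|f\|_{B^{n/p}_{p,n}}^n\lesssim \beta^{-1}\|\nabla f\|_{L^n}^n$, which is the required estimate.

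\emph{Main obstacle.} The delicate point is Step~3: a direct application of Young's inequality to the convolution with kernel $\{2^{-k\beta}\}_{k\geq 1}$ (whose $\ell^1$-norm is $\sim\beta^{-1}$) gives the much weaker conclusion $\|f\|_{B^{n/p}_{p,n}}\lesssim \beta^{-1}\|\nabla f\|_{L^n}$. Recovering the optimal $\beta^{-1/n}$ requires the sharp Hardy/subadditivity argument used in the proof of \eqref{PBBM9} (where the same trick produced the lower bound $(1-n/p)^{-1/p}$), now deployed in the reverse direction. Exploiting that $n/p<1$ to split partial sums via subadditivity, and then reorganising the resulting double sum, is the novel technical ingredient that distinguishes this argument from a routine Franke--Jawerth embedding.
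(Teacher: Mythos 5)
Your plan hits a genuine obstruction at Step 3. The claimed discrete Hardy estimate
\[
\sum_{J\in\Z}\Big(\sum_{k\geq 1}2^{-k\beta}a_{J-k}\Big)^n\lesssim \beta^{-1}\|a\|_{\ell^n}^n, \qquad \beta=1-\frac{n}{p},
\]
is false. Take $a_j=1$ for $j\in\{0,\dots,N\}$ with $N\approx\beta^{-1}$ and $a_j=0$ otherwise, so that $\|a\|_{\ell^n}^n\approx\beta^{-1}$. For $1\leq J\leq N$ every summand of $c_J:=\sum_{k=1}^{J}2^{-k\beta}$ is at least $2^{-J\beta}\geq\frac{1}{2}$, hence $c_J\geq J/2$ and
\[
\sum_{J}c_J^n\;\geq\;2^{-n}\sum_{J=1}^{N}J^n\;\gtrsim\;N^{n+1}\;\approx\;\beta^{-(n+1)},
\]
whereas the claimed right-hand side is only $\approx\beta^{-2}$; since $n\geq 2$, the estimate is violated by an unbounded factor as $p\to n+$. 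The subadditivity trick from the proof of \eqref{PBBM9} cannot repair this: there one exploits the concavity of $t\mapsto t^{n/p}$ with $n/p<1$, whereas your Step 3 involves the $n$-th power of an $\ell^1$-convolution, with no exponent below $1$ to exploit. The loss is in fact already built into Step 2: bounding $\|\nabla P_{<J}f\|_{L^p}$ by $\sum_{j<J}\|\nabla f_j\|_{L^p}$ replaces the correct square-function ($\ell^2$ in $j$) summation by $\ell^1$, and this alone overestimates $K(2^{-J},f;L^p,\dot W^1_p)$ by more than the allowable $(1-n/p)^{-1/n}$ — in the example above the true $K$-functional behaves like $2^{-Jn/p}\beta^{-1/n}$ while your bound gives $2^{-Jn/p}J$.

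The paper's proof refines the $K$-functional instead of decomposing it crudely. Using the Sobolev embedding $\dot W^1_r\hookrightarrow L^p$ with $r=\frac{np}{n+p}$ (whose constant stays bounded as $p\to n+$), it passes from $K(t,f;L^p,\dot W^1_p)$ to $K(t,f;\dot W^1_r,\dot W^1_p)$ and invokes the DeVore--Scherer rearrangement formula
\[
K\big(t,f;\dot W^1_r,\dot W^1_p\big)\approx\Big(\int_0^{t^n}\big(|\nabla f|^*(u)\big)^r\,du\Big)^{1/r}+t\Big(\int_{t^n}^\infty\big(|\nabla f|^*(u)\big)^p\,du\Big)^{1/p}
\]
with equivalence constants uniform in $p$. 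Plugging this into the Besov norm gives two pieces: the first is handled by the sharp Hardy inequality with a $p$-uniform constant, and the second by the legitimate subadditivity $\big(\sum_l b_l\big)^{n/p}\leq\sum_l b_l^{n/p}$ (since $n/p<1$) followed by an interchange of summation, which produces exactly the $(1-n/p)^{-1}$ loss in the $n$-th power of the norm. The structural advantage is that the rearrangement characterization preserves the $L^r$- and $L^p$-averages of $|\nabla f|^*$ rather than flattening them into $\ell^1$-sums of Littlewood--Paley coefficients; that is precisely the sharpness your approach discards.
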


\begin{proof}
	We first establish the following estimate of moduli of smoothness in terms of rearrangements. Let $n > 1, \frac{n}{n-1} \leq p < \infty$, and $r = \frac{n p}{n+p}$,  then (cf. \eqref{EquivKfunctMod})
		\begin{equation}\label{4}
		\tilde{\omega}(f, t)_p \leq C_{n, p} \left\{ \bigg(\int_0^{t^n}  (|\nabla f|^*(u))^r \, du \bigg)^{\frac{1}{r}} + t \bigg(\int_{t^n}^\infty (|\nabla f|^*(u))^p \, du \bigg)^{\frac{1}{p}} \right\}
	\end{equation}
	for some constant $C_{n, p} \approx 1$ as $p \to n+$. The proof of \eqref{4} can be done via standard interpolation techniques. For the convenience of the reader, we sketch below the proof.
	
	 Applying Sobolev's inequality (note that $1 \leq r < n$):
	\begin{equation}\label{3}
		\dot{W}^{1}_{r} \hookrightarrow L^p \qquad \text{with} \qquad \|f\|_{L^p} \leq S_{n, p} \, \|\nabla f\|_{L^r},
	\end{equation}
	where the Sobolev constant $S_{n, p} \to \infty$ as $p \to \infty$ (i.e. $r \to n-$), but $S_{n, p} \approx 1$ as $p \to n+$ (i.e. $r \to \big( \frac{n}{2} \big)_+$). If we interpolate \eqref{3} with the trivial embedding $\dot{W}^1_p \hookrightarrow \dot{W}^1_p$, it is plain to see that (cf. \eqref{DefKFunct})
	\begin{equation}\label{C1}
		K(S_{n, p} t, f; L^p, \dot{W}^1_p) \leq S_{n, p} \, K ( t, f; \dot{W}^{1}_r, \dot{W}^1_p), \qquad \forall t > 0.
	\end{equation}
	The left-hand side can be computed using \eqref{EquivKfunctMod}:
	\begin{equation}\label{C2}
		K(S_{n, p} t, f; L^p, \dot{W}^1_p) \approx \tilde{\omega} (f, S_{n, p} t)_p \approx  \tilde{\omega} (f, t)_p  \qquad \text{as} \qquad p \to n+.
	\end{equation}
	On the other hand, characterizations of the $K$-functional appearing in the right-hand side of \eqref{C1} follows from the work of DeVore and Scherer \cite{DeVoreScherer}, namely,
	\begin{equation}\label{C3}
		K ( t, f; \dot{W}^{1}_r, \dot{W}^1_p) \approx  \bigg(\int_0^{t^n}  (|\nabla f|^*(u))^r \, du \bigg)^{\frac{1}{r}} + t \bigg(\int_{t^n}^\infty (|\nabla f|^*(u))^p \, du \bigg)^{\frac{1}{p}},
	\end{equation}
	with underlying equivalence constant independent of $p$. Collecting now \eqref{C1}--\eqref{C3}, we obtain the desired estimate \eqref{4}.

	Since $\omega(f, t)_p \leq \tilde{\omega}(f, t)_p$ (cf. \eqref{EquivMod}), we can apply \eqref{4} together with a simple change of variables and basic monotonicity properties to get
	\begin{align}
		\|f\|_{B^{n/p}_{p, n}} & \leq  \bigg(\int_0^\infty (t^{-n/p} \tilde{\omega}(f, t)_p)^n \, \frac{dt}{t} \bigg)^{1/n} 	\nonumber \\
		& \lesssim \left(\int_0^\infty t^{-n/p}  \bigg(\int_0^{t}  (|\nabla f|^*(u))^r \, du \bigg)^{\frac{n}{r}} \, \frac{dt}{t}  \right)^{1/n} + \left(\int_0^\infty t^{1-n/p} \bigg(\int_{t}^\infty  (|\nabla f|^*(u))^p \, du \bigg)^{\frac{n}{p}} \, \frac{dt}{t}   \right)^{1/n} \nonumber  \\
		&\approx \left(\int_0^\infty t^{-n/p}  \bigg(\int_0^{t}  (|\nabla f|^*(u))^r \, du \bigg)^{\frac{n}{r}} \, \frac{dt}{t}  \right)^{1/n} + \left(\sum_{j=-\infty}^\infty 2^{-j(1-n/p)} \bigg(\sum_{l=-\infty}^j  (|\nabla f|^*(2^{-l}))^p 2^{-l} \bigg)^{\frac{n}{p}}    \right)^{1/n} \nonumber  \\
		& =: \mathcal{I}_1 + \mathcal{I}_2. \label{C4}
		\end{align}
		
		To estimate $\mathcal{I}_1$, we can invoke the sharp version of Hardy's inequality as stated in \cite[Lemma 2.1]{Kolyada},
		\begin{align}
		\mathcal{I}_1& \leq \bigg(\frac{p}{r} \bigg)^{1/r} \left(\int_0^\infty [t  (|\nabla f|^*(t))^r]^{n/r}  t^{-n/p} \, \frac{dt}{t}  \right)^{1/n} \nonumber  \\
		& = \bigg(1 + \frac{p}{n} \bigg)^{\frac{1}{p}+\frac{1}{n}} \left(\int_0^\infty (|\nabla f|^*(t))^n \, dt \right)^{1/n}  \approx  \left(\int_0^\infty (|\nabla f|^*(t))^n \, dt \right)^{1/n}   =   \|\nabla f\|_{L^n}. \label{C5}
	\end{align}
	On the other hand, $\mathcal{I}_2$ can be estimated as follows: use that $p > n$ and change the order of summation so that
	\begin{align}
		\mathcal{I}_2 &\leq  \left(\sum_{j=-\infty}^\infty 2^{-j(1-n/p)} \sum_{l=-\infty}^j  (|\nabla f|^*(2^{-l}))^n 2^{-l n/p}    \right)^{1/n} \nonumber \\
		& \leq  \left(\sum_{l=-\infty}^\infty  (|\nabla f|^*(2^{-l}))^n 2^{-l n/p}  \sum_{j= l }^\infty 2^{-j(1-n/p)}    \right)^{1/n}  \nonumber \\
		& \approx \bigg(1-\frac{n}{p} \bigg)^{-1/n}  \left(\sum_{l=-\infty}^\infty  (|\nabla f|^*(2^{-l}))^n   2^{-l}    \right)^{1/n} \nonumber \\
		& \approx \bigg(1-\frac{n}{p} \bigg)^{-1/n}   \|\nabla f\|_{L^n}. \label{C6}
	\end{align}
	
	Plugging \eqref{C5} and \eqref{C6} into \eqref{C4}, we arrive at
	$$
		\|f\|_{B^{n/p}_{p, n}}  \lesssim  \bigg(1-\frac{n}{p} \bigg)^{-1/n}   \|\nabla f\|_{L^n}
	$$
	and the proof is finished.
\end{proof}

\subsection{Sharp fractional Poincar\'e inequalities}
Recall the directional Poincar\'e inequalities recently obtained by Haddad, Jim\'enez, and Montenegro in \cite[Lemma 1]{HaddadJimenezMontenegro}.

\begin{thm}\label{ThmHLM}
	Let $\Omega \subset \R^n$ be a bounded open set  and let $p \in [1, \infty)$.  Then, for every $\xi \in \S^{n-1}$ and every  $f \in C^1$ with support in $\Omega$,
		\begin{equation}\label{PoinAff}
	   \|f\|_{L^p} \leq c_p^{-1} \text{\rm{w}}(\Omega, \xi)  \, \|\nabla f \cdot \xi\|_{L^p},
	\end{equation}
	where $\rm{w} (\Omega, \xi)$ is the width of $\Omega$ in the direction of $\xi$ and the constant $c_p$ depends\footnote{$c_p = 	 \left\{\begin{array}{cl}  \frac{2}{p} \frac{\pi-\frac{\pi}{p}}{\sin \frac{\pi}{p}} (p-1)^{\frac{1}{p}-1}, & \text{if} \quad p > 1, \\
	2,  & \text{if} \quad p=1.
		       \end{array}
                        \right.$} only on $p$.
\end{thm}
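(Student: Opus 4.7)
The idea is to reduce the $n$-dimensional inequality to a family of one-dimensional Poincar\'e inequalities along the lines parallel to $\xi$, and then integrate out the transverse variables.

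\textbf{Step 1 (reduction to a coordinate direction).} Let $R \in O(n)$ with $R e_n = \xi$. Setting $\tilde f = f\circ R^{-1}$, one has $\|\tilde f\|_{L^p} = \|f\|_{L^p}$, $\mathrm{supp}\,\tilde f \subset R\Omega$, $\mathrm{w}(R\Omega,e_n) = \mathrm{w}(\Omega,\xi)$, and $\partial_n \tilde f = (\nabla f\cdot\xi)\circ R^{-1}$. Thus it suffices to prove
$$
\|f\|_{L^p(\R^n)} \leq c_p^{-1}\, \mathrm{w}(\Omega,e_n)\, \|\partial_n f\|_{L^p(\R^n)}
$$
for $f \in C^1$ with compact support in $\Omega$.

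\textbf{Step 2 (slicing and the sharp 1D inequality).} Write $x=(y',t)$ with $y'\in\R^{n-1}$ and $t\in\R$. For each fixed $y'$, the map $t\mapsto f(y',t)$ is $C^1$ with compact support contained in the interval $\{t : (y',t)\in\Omega\}$, whose length is bounded by $L := \mathrm{w}(\Omega,e_n)$ by the definition of the width. Apply the sharp one-dimensional Dirichlet Poincar\'e inequality: for every $u \in C^1(\R)$ supported in an interval of length $L$,
\begin{equation}\label{OneDPoin}
\int_\R |u(t)|^p\, dt \leq c_p^{-p} L^p \int_\R |u'(t)|^p\, dt,
\end{equation}
with $c_p$ as in the statement.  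For $p=1$ this is elementary (splitting $u(t) = \int_{-\infty}^t u' = -\int_t^\infty u'$ yields $|u(t)|\leq \tfrac12 \|u'\|_{L^1}$, whence $c_1=2$); for $p>1$ it is the classical one-dimensional Dirichlet $p$-Laplacian inequality, whose extremizers are given by the $p$-sine function (the inverse of $x\mapsto \int_0^x(1-s^p/(p-1))^{-1/p}\,ds$), producing the stated formula for $c_p$.

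\textbf{Step 3 (Fubini).} Applying \eqref{OneDPoin} to $u(t)=f(y',t)$ and integrating in $y'$ over $\R^{n-1}$, Fubini's theorem gives
$$
\|f\|_{L^p(\R^n)}^p = \int_{\R^{n-1}}\!\!\int_\R |f(y',t)|^p\,dt\,dy' \leq c_p^{-p} L^p \int_{\R^{n-1}}\!\!\int_\R |\partial_n f(y',t)|^p\,dt\,dy' = c_p^{-p} L^p \|\partial_n f\|_{L^p}^p.
$$
Taking $p$-th roots and undoing the rotation gives \eqref{PoinAff}.

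\textbf{Expected main obstacle.} The only nontrivial ingredient is the sharp 1D constant in \eqref{OneDPoin}; the reduction itself is routine. For $p>1$ one must either invoke the explicit formula for the first eigenvalue of the Dirichlet $p$-Laplacian on an interval, or carry out the rearrangement/variational argument that yields it. Since the statement only claims validity with the explicit $c_p$ (rather than its sharpness in $n$-dimensions), one may in fact use any clean 1D argument that yields this constant, e.g. the one-variable version of Bliss's inequality or direct integration of the Euler--Lagrange equation $(|u'|^{p-2}u')'+\lambda |u|^{p-2}u=0$ with Dirichlet data.
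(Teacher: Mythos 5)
The paper does not prove Theorem \ref{ThmHLM}; it is simply quoted as Lemma 1 of Haddad--Jim\'enez--Montenegro \cite{HaddadJimenezMontenegro}, so there is no in-paper argument to compare your proposal against. That said, your strategy --- rotate $\xi$ to a coordinate direction, apply the sharp one-dimensional Dirichlet Poincar\'e inequality on each line parallel to $\xi$, and integrate the transverse variables by Fubini --- is the natural and standard argument, and is the one given in \cite{HaddadJimenezMontenegro}; Steps 1 and 3 are fine as written.

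Two small points on Step 2. First, the set $\{t : (y',t)\in\Omega\}$ need not be an interval (for nonconvex $\Omega$ it is a union of intervals); what you actually want, and what the definition of width gives, is that $\Omega$ lies in a slab $\{x : a \leq x\cdot e_n \leq a + L\}$ with $L = \mathrm{w}(\Omega,e_n)$, so for every $y'$ the support of $t\mapsto f(y',t)$ is contained in the \emph{fixed} interval $[a,a+L]$, on which the 1D Dirichlet Poincar\'e inequality applies. Second, a note on the constant: the sharp 1D Dirichlet Poincar\'e inequality on an interval of length $L$ reads $\|u\|_{L^p} \leq \frac{L}{(p-1)^{1/p}\pi_p}\|u'\|_{L^p}$ with $\pi_p = \frac{2\pi}{p\sin(\pi/p)}$, so the sharp 1D constant is $c_p = \frac{2\pi(p-1)^{1/p}}{p\sin(\pi/p)}$ (giving $c_2=\pi$). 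Simplifying the footnote's expression gives $\frac{2\pi(p-1)^{1/p}}{p^2\sin(\pi/p)}$, which is smaller by a factor $p$ (e.g.\ it yields $\pi/2$ at $p=2$). The stated inequality is of course still true with this smaller $c_p$, and your argument in fact proves the sharper bound, but your identification of the footnote's $c_p$ with the sharp 1D constant does not literally hold: either the footnote carries a typo (missing a factor $p$) or a non-sharp constant was intentionally recorded. Either way your proof goes through and, if anything, establishes more than the footnote claims.
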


Our next result consists of an improvement of Theorem \ref{ThmHLM}; cf. Remark  \ref{RemHLM} below.

\begin{thm}\label{ThmOptim2}
Let $\Omega \subset \R^n$ be a bounded open set  and let $s \in (0, 1), p, q \in [1, \infty)$.  Then, for every $\xi \in \S^{n-1}$ and every  $f \in C^1$ with support in $\Omega$,
\begin{equation}\label{425}
	\|f\|_{L^p} \leq C  (1-s)^{\frac{1}{q}} \text{\rm{w}} (\Omega, \xi)^{s} \left( \int_0^{\text{\rm{w}}(\Omega, \xi)} t^{- s q} \|\Delta_{t \xi} f\|_{L^p}^q \, \frac{dt}{t} \right)^{\frac{1}{q}},
	\end{equation}
	where  $C$ can be taken to be\footnote{Note that $C \approx C_{p, q}$ for $s \in (0, 1)$, where $C_{p, q}$ denotes a constant that depends only on $p$ and $q$.}
	$$
		C = \frac{9}{c_p} \bigg(\frac{q}{s q + 1} \bigg)^{\frac{1}{q}} \max \bigg\{1, \frac{c_p}{3} \bigg\}^{1-s}
	$$
	with $c_p$ being the constant in Theorem \ref{ThmHLM}.
\end{thm}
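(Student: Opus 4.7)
The plan is to derive a \emph{master pointwise-in-$T$ inequality} by applying Theorem \ref{ThmHLM} to a time-averaged version of $f$, and then to integrate over $T$ against a carefully chosen weight that produces the $(1-s)^{1/q}$ normalization on the right of \eqref{425}.

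Set $W := \text{\rm{w}}(\Omega,\xi)$ and, for each $T>0$, consider the average $F_T(x) := \int_0^T f(x+\tau\xi)\,d\tau$. A direct computation yields the identity
\begin{equation*}
f(x) \;=\; \frac{1}{T}\,F_T(x) \;-\; \frac{1}{T}\int_0^T \Delta_{\tau\xi}f(x)\,d\tau,
\end{equation*}
and one checks that $\partial_\xi F_T = \Delta_{T\xi}f$ while $\supp F_T \subseteq \Omega + [-T,0]\xi$, a set whose width in direction $\xi$ equals $W+T$. Applying Theorem \ref{ThmHLM} to $F_T$ on this enlarged set gives $\|F_T\|_{L^p}\leq c_p^{-1}(W+T)\|\Delta_{T\xi}f\|_{L^p}$; substituting into the above identity and taking $L^p$ norms via the triangle inequality produces the \emph{master estimate}
\begin{equation*}
\|f\|_{L^p}\;\leq\;\frac{1}{T}\int_0^T\|\Delta_{\tau\xi}f\|_{L^p}\,d\tau\;+\;\frac{c_p^{-1}(W+T)}{T}\,\|\Delta_{T\xi}f\|_{L^p},\qquad T>0.
\end{equation*}

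To pass from this $T$-pointwise bound to the integrated form \eqref{425}, raise the inequality to the $q$-th power using $(a+b)^q\leq 2^{q-1}(a^q+b^q)$, and integrate over $T\in(0,W)$ against the weight $T^{(1-s)q-1}\,dT$, chosen so that $\int_0^W T^{(1-s)q-1}\,dT = W^{(1-s)q}/((1-s)q)$ furnishes precisely the factor $(1-s)^{1/q}W^s$ on the right of \eqref{425} after rearrangement. For the first summand, Jensen's inequality (or the sharper weighted H\"older) gives
\begin{equation*}
\left(\frac{1}{T}\int_0^T \|\Delta_{\tau\xi}f\|_{L^p}\,d\tau\right)^{q} \;\leq\; T^{sq}\int_0^T \tau^{-sq}\|\Delta_{\tau\xi}f\|_{L^p}^{q}\,\frac{d\tau}{\tau},
\end{equation*}
so a Fubini argument bounds its $T$-integral by a constant multiple of $W^q$ times the Gagliardo integral appearing in \eqref{425}. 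For the second summand, the bound $(W+T)/T\leq 2W/T$ on $(0,W]$ reassembles $\int_0^W T^{(1-s)q-1-q}\|\Delta_{T\xi}f\|^q\,dT$ exactly into $2^qW^q$ times the same Gagliardo integral, multiplied by $c_p^{-q}$. Rearranging and taking $q$-th roots (using $(A+B)^{1/q}\leq A^{1/q}+B^{1/q}$) yields a bound of the required form, with a constant of the shape $C_1+C_2/c_p$.

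The principal obstacle is reconciling the precise constant $\tfrac{9}{c_p}(q/(sq+1))^{1/q}\max\{1,c_p/3\}^{1-s}$ stated in the theorem: the factor $(q/(sq+1))^{1/q}$ emerges from the sharper weighted H\"older bound on the first summand, while $\max\{1,c_p/3\}^{1-s}$ reflects a dichotomy over which term on the right of the master estimate dominates (the averaged differences versus the Poincar\'e-controlled term), with the crossover governed by the size of $c_p$. Conceptually, the subtlety is that Theorem \ref{ThmHLM} must be applied to $F_T$ on the enlarged width $W+T$, so the factor $(W+T)/T$ diverges like $1/T$ as $T\to 0$; this divergence is only cancelled by the decay $\|\Delta_{T\xi}f\|\lesssim T\|\partial_\xi f\|$ at small $T$ once integrated against the critical weight $T^{(1-s)q-1}$, which is moreover exactly the weight that produces the sharp $(1-s)^{1/q}$ normalization in the final bound.
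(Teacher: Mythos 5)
Your proof is correct and follows essentially the same route as the paper: both apply the directional Poincar\'e inequality of Theorem \ref{ThmHLM} to the Steklov average $g_t(x)=\frac{1}{t}\int_0^t f(x+u\xi)\,du$ (your $\frac1T F_T$), using that its support lies in an enlarged domain of comparable width, that $\nabla g_t\cdot\xi=\Delta_{t\xi}f/t$, and that $\|f-g_t\|_{L^p}$ is controlled by $\frac1t\int_0^t\|\Delta_{u\xi}f\|_{L^p}\,du$, before integrating against the critical power weight in $t$. The differences are purely in bookkeeping (you use the $T$-dependent enlargement $\Omega+[-T,0]\xi$ and a $2^{q-1}$-split of the two terms, whereas the paper fixes $\Omega_\xi$ of width $\le 3\,\mathrm{w}(\Omega,\xi)$ and first derives a pointwise $\min$-estimate), which accounts for the numerically different but qualitatively equivalent constant you obtain.
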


\begin{rem}\label{RemHLM}
	Using that $\|\Delta_{t \xi} f\|_{L^p} \leq t \|\nabla f \cdot \xi\|_{L^p}$ (cf. \eqref{314b2}), we have
	$$
		((1-s)q)^{\frac{1}{q}} \text{\rm{w}} (\Omega, \xi)^{s} \left( \int_0^{\text{\rm{w}}(\Omega, \xi)} t^{- s q} \|\Delta_{t \xi} f\|_{L^p}^q \, \frac{dt}{t} \right)^{\frac{1}{q}} \leq \text{\rm{w}}(\Omega, \xi) \, \|\nabla f \cdot \xi\|_{L^p},
	$$
	i.e., the right-hand side of \eqref{425} can always be estimated by the corresponding one of \eqref{PoinAff}. In addition, the following limit formula holds
	$$
		\lim_{s \to 1-} \text{\rm{w}} (\Omega, \xi)^{s} (1-s)^{\frac{1}{q}}  \left( \int_0^\infty t^{- s q} \|\Delta_{t \xi} f\|_{L^p}^q \, \frac{dt}{t} \right)^{\frac{1}{q}} = q^{-\frac{1}{q}}  \text{\rm{w}}(\Omega, \xi) \, \|\nabla f \cdot \xi\|_{L^p}.
	$$
	The proof of this result can be obtained similarly as \eqref{Th1-e1}.
\end{rem}

\begin{rem}
	One can not expect \eqref{425} to hold with the additional prefactor $s^{1/q}$ on the right-hand side, that is, the Maz'ya--Shaposhnikova phenomenon does not appear in this case. To see this, one may use \eqref{314b2} and take limits as $s \to 0+$.
\end{rem}

\begin{proof}[Proof of Theorem \ref{ThmOptim2}]
Let $\Omega_\xi = \{x \in \R^n : \text{dist } (x, \overline{\Omega}) <  \text{\rm{w}} (\Omega, \xi)\}$. Note that $\Omega_\xi$ is a bounded  open set with $\Omega \subset \Omega_\xi$ and
\begin{equation}\label{WidthDomain}
	\text{\rm{w}}(\Omega_\xi, \xi) \leq 3 \text{\rm{w}} (\Omega, \xi).
\end{equation}

For every fixed $t \in (0, w(\Omega, \xi))$, define
$$
	g_{ t} (x) = \frac{1}{t} \int_0^t f(x + u \xi) \, du, \qquad x \in \R^n.
$$
We next show that
\begin{equation}\label{423n}
\text{supp } g_{ t} \subset \Omega_\xi.
\end{equation}
 Indeed, we will prove that $x \not \in \Omega_\xi$ guarantees $x + u \xi \not \in \Omega$ for all $u \in (0, \text{\rm{w}}(\Omega, \xi))$ (and hence $x \not \in \text{supp } g_{t}$). Otherwise, if there exists $u \in (0, \text{\rm{w}}(\Omega, \xi))$ such that $x + u \xi \in \Omega$, then
$
	|x-(x+u \xi)| = u < \text{\rm{w}}(\Omega, \xi).
$
In particular, $\text{dist }(x, \overline{\Omega}) <  \text{\rm{w}}(\Omega, \xi)$ and then $x \in \Omega_\xi$, which is not true. This proves the desired claim \eqref{423n}.

We wish to apply \eqref{PoinAff} to $f = g_{t}$ and $\Omega = \Omega_\xi$. Accordingly
\begin{equation*}
	c_p \text{w} (\Omega_\xi, \xi)^{-1}  \|g_t\|_{L^p} \leq \|\nabla g_t \cdot \xi\|_{L^p},
	\end{equation*}
	and, by \eqref{WidthDomain},
	\begin{equation}\label{PoinAffv2}
	\frac{c_p}{3} \text{w} (\Omega, \xi)^{-1}  \|g_t\|_{L^p} \leq \|\nabla g_t \cdot \xi\|_{L^p}.
	\end{equation}
	Furthermore, basic computations yield that
	$$
		\nabla g_t(x) \cdot \xi = \frac{1}{t} \int_0^t \nabla f(x + u \xi) \cdot \xi \, du = \frac{1}{t} \int_0^t \frac{d}{d u} f(x + u \xi) \, d u = \frac{\Delta_{t \xi} f (x)}{t}
	$$
	and hence
	\begin{equation*}
		 \|\nabla g_t \cdot \xi\|_{L^p} = \frac{\|\Delta_{t \xi} f\|_{L^p}}{t}.
	\end{equation*}
	Plugging this into \eqref{PoinAffv2}, we arrive at
	\begin{equation}\label{PoinAffv2n}
		\frac{c_p}{3} \text{w} (\Omega, \xi)^{-1}  \|g_t\|_{L^p} \leq  \frac{\|\Delta_{t \xi} f\|_{L^p}}{t}.
	\end{equation}
	
	On the other hand, we can estimate the $L^p$ norm of $f-g_t$ via integral Minkowski's inequality as follows:
	\begin{equation}\label{PoinAffv3n}
		\|f-g_t\|_{L^p}  = \frac{1}{t} \bigg\| \int_0^t (f(\cdot)-f(\cdot+u \xi)) \, du \bigg\|_{L^p} \leq \frac{1}{t} \int_0^t \|\Delta_{u \xi} f\|_{L^p} \, du.
	\end{equation}
	As a combination of \eqref{PoinAffv2n} and \eqref{PoinAffv3n}, we get
	$$
		\|f-g_t\|_{L^p}  + \frac{c_p}{3} \text{w} (\Omega, \xi)^{-1} t  \|g_t\|_{L^p} \leq \frac{1}{t} \int_0^t \|\Delta_{u \xi} f\|_{L^p} \, du + \|\Delta_{t \xi} f\|_{L^p}.
	$$
	Therefore
	\begin{align}
		\min \bigg\{1, \frac{c_p}{3} \text{w} (\Omega, \xi)^{-1} t \bigg\} \|f\|_{L^p} & \leq \min \bigg\{1, \frac{c_p}{3} \text{w} (\Omega, \xi)^{-1} t \bigg\}  (\|f-g_t\|_{L^p} + \|g_t\|_{L^p}) \nonumber \\
		& \leq \|f-g_t\|_{L^p} +  \frac{c_p}{3} \text{w} (\Omega, \xi)^{-1} t  \|g_t\|_{L^p} \nonumber \\
		& \leq \frac{1}{t} \int_0^t \|\Delta_{u \xi} f\|_{L^p} \, du + \|\Delta_{t \xi} f\|_{L^p}.\label{PoinAffv4n}
	\end{align}
	
	Since $\Delta_{t \xi} f (x) = \Delta_{(t-u) \xi} f(x+u \xi) + \Delta_{u \xi} f(x), \, u \in (0, t)$, we obtain (after a simple change of variables)
	$$
		\|\Delta_{t \xi} f\|_{L^p} \leq \| \Delta_{(t-u) \xi} f\|_{L^p} + \|\Delta_{u \xi} f\|_{L^p}.
	$$
	Integrating the last expression over all $u \in (0, t)$, we derive
	\begin{equation*}
		t \|\Delta_{t \xi} f\|_{L^p}  \leq \int_0^t \| \Delta_{(t-u) \xi} f\|_{L^p}  \, du + \int_0^t \|\Delta_{u \xi} f\|_{L^p} \, du  = 2 \int_0^t \|\Delta_{u \xi} f\|_{L^p} \, du,
	\end{equation*}
	where we have applied another change of variables in the last step. Hence, by \eqref{PoinAffv4n},
	\begin{equation*}
		\min \bigg\{1, \frac{c_p}{3} \text{w} (\Omega, \xi)^{-1} t \bigg\} \|f\|_{L^p}  \leq \frac{3}{t} \int_0^t   \|\Delta_{u \xi} f\|_{L^p} \, du
	\end{equation*}
	and, using H\"older's inequality with exponent $q$,
	\begin{equation}\label{PoinAffv5n}
	\min \bigg\{1, \frac{c_p}{3} \text{w} (\Omega, \xi)^{-1} t \bigg\} \|f\|_{L^p}  \leq 3 \bigg(\frac{1}{t}  \int_0^t   \|\Delta_{u \xi} f\|^q_{L^p} \, du \bigg)^{\frac{1}{q}}
	\end{equation}
	for every $t \in (0, \text{w}(\Omega, \xi))$.

	If we multiply both sides of \eqref{PoinAffv5n} by $t^{-s}$ and then apply $L^q((0, \text{w}(\Omega, \xi); \frac{d t}{t})$ norms, we achieve
	\begin{equation}\label{PoinAffv6n}
	\bigg(\int_0^{\text{w}(\Omega, \xi)} t^{-s q} \min \bigg\{1, \frac{c_p}{3} \text{w} (\Omega, \xi)^{-1} t \bigg\}^q \, \frac{dt}{t}   \bigg)^{\frac{1}{q}} \, \|f\|_{L^p} \leq	3 \bigg(\int_0^{\text{w}(\Omega, \xi)} t^{-s q-1}  \int_0^t   \|\Delta_{u \xi} f\|^q_{L^p} \, du \, \frac{d t}{t} \bigg)^{\frac{1}{q}}.
	\end{equation}
	On the one hand, the right-hand side can be computed by using Fubini's theorem:
	\begin{align}
		\int_0^{\text{w}(\Omega, \xi)} t^{-s q-1}  \int_0^t   \|\Delta_{u \xi} f\|^q_{L^p} \, du \, \frac{d t}{t} &= \int_0^{\text{w}(\Omega, \xi)} \|\Delta_{u \xi} f\|^q_{L^p} \int_{u}^{\text{w}(\Omega, \xi)} t^{-sq -1} \, \frac{dt}{t} \, du \nonumber  \\
		& \leq \frac{1}{s q + 1}  \int_0^{\text{w}(\Omega, \xi)} u^{-s q} \|\Delta_{u \xi} f\|^q_{L^p} \, \frac{du}{u}. \label{PoinAffv7n}
	\end{align}
	On the other hand, to estimate the left-hand side of \eqref{PoinAffv6n} one can proceed as follows
	\begin{align}
	\int_0^{\text{w}(\Omega, \xi)} t^{-s q} \min \bigg\{1, \frac{c_p}{3} \text{w} (\Omega, \xi)^{-1} t \bigg\}^q \, \frac{dt}{t} &\geq  \bigg( \frac{c_p}{3} \text{w} (\Omega, \xi)^{-1}\bigg)^q \int_0^{\min\{1, \frac{3}{c_p}\}\text{w}(\Omega, \xi)} t^{(1-s) q} \, \frac{dt}{t}  \nonumber  \\
	& \hspace{-4cm}= \bigg( \frac{c_p}{3} \bigg)^q ((1-s)q)^{-1}  \min \bigg\{1, \frac{3}{c_p} \bigg\}^{(1-s) q} \text{w}(\Omega, \xi)^{-s q}.  \label{PoinAffv8n}
	\end{align}
	Finally, \eqref{PoinAffv6n}, \eqref{PoinAffv7n} and \eqref{PoinAffv8n} result in
	$$
		   \|f\|_{L^p} \leq \frac{9}{c_p} \bigg(\frac{(1-s) q}{s q + 1} \bigg)^{\frac{1}{q}} \max \bigg\{1, \frac{c_p}{3} \bigg\}^{1-s}  \text{w}(\Omega, \xi)^{s} \, \bigg( \int_0^{\text{w}(\Omega, \xi)} u^{-s q} \|\Delta_{u \xi} f\|^q_{L^p} \, \frac{du}{u} \bigg)^{\frac{1}{q}}.
	$$
\end{proof}

\section{Affine fractional Moser--Trudinger inequalities}\label{Section5}



\begin{proof}[Proof of Theorem \ref{ThmFractCLYYIntro}]
	By $c_n$ we denote a purely dimensional constant that may vary from line to line.  As a combination of \eqref{347} and \eqref{347*},  we derive
	\begin{equation}\label{PF1}
		 \omega_n^{-1/r} |\Pi^{*, n/r}_{r, n} f^\star|^{1/r} \|f^\star\|_{B^{n/r}_{r, n}}= \|f^\star\|_{B^{n/r}_{r, n; \Pi^{*, n/r}_{r, n} f^\star}} \leq  4 \bigg( \frac{2^n +1}{\frac{n}{r} + 1} \bigg)^{1/n}.
	\end{equation}
	On the other hand, from Theorem \ref{ThmEmbBesov1} with $f=f^\star$ (recall that $|\text{supp }f| = |\text{supp } f^\star|$),
	\begin{equation}\label{PF2*}
		\|f\|_{L^q} = \|f^\star\|_{L^q} \leq c_n |\text{supp } f|^{1/q}    q^{1/n'} \Big(1-\frac{n}{r} \Big)^{1/n} \|f^\star\|_{B^{n/r}_{r, n}}.
	\end{equation}
	Putting together \eqref{PF1} and \eqref{PF2*},
	$$
		\|f\|_{L^q(\R^n)} \leq c_n |\text{supp } f|^{1/q}    q^{1/n'} \Big(1-\frac{n}{r} \Big)^{1/n} 4 \bigg( \frac{2^n +1}{\frac{n}{r} + 1} \bigg)^{\frac{1}{n}}  \omega_n^{\frac{1}{r}} |\Pi^{*, n/r}_{r, n} f^\star|^{-\frac{1}{r}}.
	$$
	Note that $\big( \frac{2^n +1}{\frac{n}{r} + 1} \big)^{\frac{1}{n}} \leq 4$ and then the previous estimate gives
			\begin{equation}\label{PF2}
		\|f\|_{L^q(\R^n)} \leq c_n |\text{supp } f|^{1/q}    q^{1/n'} \Big(1-\frac{n}{r} \Big)^{1/n} \omega_n^{1/r}  |\Pi^{*, n/r}_{r, n} f^\star|^{-1/r}.
	\end{equation}
	
	Since $n < r$, we can apply Theorem \ref{ThmPSBod}:
	\begin{equation*}
		|\Pi^{*, n/r}_{r, n} f^\star|^{-1/r} \leq 4 (2^n + 1)^{1/n} |\Pi^{*, n/r}_{r, n} f|^{-1/r}.
	\end{equation*}
	Plugging this into \eqref{PF2}, we have
	\begin{equation}\label{PF3}
			\|f\|_{L^q(\R^n)} \leq c_n |\text{supp } f|^{1/q}     q^{1/n'} \Big(1-\frac{n}{r} \Big)^{1/n} \omega_n^{1/r}  |\Pi^{*, n/r}_{r, n} f|^{-1/r}.
	\end{equation}
	
	Let $\lambda > 1$. Using the series development of the exponential function, it follows from \eqref{PF3} with  $q = k n'$ that
	\begin{align*}
	\frac{1}{|\text{supp } f|}	\int_{\R^n} \bigg[ \exp \bigg(\frac{|f(x)|}{c_n (n' \lambda e)^{1/n'} \, \omega_n^{1/r} (1-\frac{n}{r})^{1/n}  |\Pi^{*, n/r}_{r, n} f|^{-1/r}} \bigg)^{n'} - 1 \bigg] \, dx & \\
	& \hspace{-9cm}= 	\frac{1}{|\text{supp } f|}	\,  \sum_{k=1}^\infty \frac{1}{k! (n' \lambda e)^k} \int_{\R^n} \bigg(\frac{|f(x)|}{c_n  \omega_n^{1/r} (1-\frac{n}{r})^{1/n}  |\Pi^{*, n/r}_{r, n} f|^{-1/r}} \bigg)^{k n'}  \, dx \\
	& \hspace{-9cm} \leq \sum_{k=1}^\infty \frac{1}{k!} \bigg(\frac{k}{\lambda e} \bigg)^k.
	\end{align*}
	Note that the last series is convergent using Stirling's formula
	$
		k! = \sqrt{2 \pi k} \big(\frac{k}{e} \big)^k \big(1 + O \big(\frac{1}{k} \big) \big).
	$
\end{proof}

\begin{proof}[Proof of Theorem \ref{Theorem5.6}]	
	According to the comparison result of Huang and Li \cite[Theorem 1.2]{HuangLi}: Given $f \in W^{1}_{n}$ there exists $T_n \in \text{SL}(n)$ and a constant $c_n$, which depends only on $n$,  satisfying
	\begin{equation}\label{ProofOptimAff1}
		c_n \,  \|\nabla (f \circ T_n)\|_{L^n} \leq |\Pi^*_n f|^{-1/n}.
	\end{equation}
	On the other hand, applying Theorem \ref{ThmSharpIlin} with $f \circ T_n$, we have
	\begin{equation}\label{ProofOptimAff2}
		 \bigg(1-\frac{n}{r} \bigg)^{1/n} \|f \circ T_n\|_{B^{n/r}_{r, n}} \lesssim \|\nabla (f \circ T_n)\|_{L^n}.
	\end{equation}
	 From \eqref{RelationB}, \eqref{ProofOptimAff2} and \eqref{ProofOptimAff1},
	 \begin{equation}\label{D1}
	 	 \mathcal{G}_r (f \circ T_n)\lesssim \|\nabla (f \circ T_n)\|_{L^n} \lesssim |\Pi^*_n f|^{-1/n}.
	 \end{equation}
	 Furthermore, in light of \eqref{ProofOptimAff0}, the left-hand side of \eqref{D1} coincides with
	 $
	 	 \mathcal{G}_r(f).
	 $
	 This completes the proof of \eqref{D2}.
	
	 Next we proceed with the proof of \eqref{LimP1}. Without loss of generality, we may assume that $r \in (n, 2n)$ and $\text{supp } f \subset \Omega = B(0, R)$ for some $R > 0$. It follows from  Theorem \ref{ThmOptim2} and H\"older's inequality that
\begin{align*}
	\left[\bigg(1-\frac{n}{r} \bigg) \int_0^\infty t^{-\frac{n^2}{r}} \|\Delta_{t \xi} f\|_{L^r}^n \, \frac{dt}{t} \right]^{\frac{r}{n}} &\gtrsim \text{w}(\Omega, \xi)^{-n}   \|f\|_{L^r}^r  \\
	& \hspace{-3.5cm} \geq  \text{w}(\Omega, \xi)^{-n} |\Omega|    \bigg( \frac{\|f\|_{L^n}}{|\Omega|^{\frac{1}{n}}} \bigg)^r  \geq \text{w}(\Omega, \xi)^{-n} |\Omega| \min\{1, A\}^n A^n,
\end{align*}
where $A= \frac{\|f\|_{L^n}}{|\Omega|^{1/n}}$. In particular
$$
	\inf_{r \in (n, 2 n)} \left[\bigg(1-\frac{n}{r} \bigg) \int_0^\infty t^{-\frac{n^2}{r}} \|\Delta_{t \xi} f\|_{L^r}^n \, \frac{dt}{t} \right]^{\frac{r}{n}}  \gtrsim \text{w}(\Omega, \xi)^{-n} |\Omega| \min\{1, A\}^n A^n,
$$
and hence
\begin{align}
\sup_{\xi \in \S^{n-1}}\sup_{r \in (n, 2 n)}   \bigg(1-\frac{n}{r} \bigg)^{-\frac{r}{n}}\|\xi\|^{-n}_{\Pi^{*, \frac{n}{r}}_{r, n} f} &=	\sup_{\xi \in \S^{n-1}}\sup_{r \in (n, 2 n)} \left[\bigg(1-\frac{n}{r} \bigg) \int_0^\infty t^{-\frac{n^2}{r}} \|\Delta_{t \xi} f\|_{L^r}^n \, \frac{dt}{t} \right]^{-\frac{r}{n}} \nonumber \\
& \lesssim  \frac{1}{|\Omega|} \bigg(\frac{\text{diam } \Omega}{\min\{1, A\} A} \bigg)^n.  \label{LimP2}
\end{align}

Since (cf. \eqref{Volume})
$$
	\mathcal{G}_r(f)^{-r} =  \frac{1}{n} \int_{\S^{n-1}} \bigg(1-\frac{n}{r} \bigg)^{-\frac{r}{n}}\|\xi\|^{-n}_{\Pi^{*, \frac{n}{r}}_{r, n} f} \, d \xi
$$
 and \eqref{LimP2} hold, we can apply the dominated convergence theorem together with Theorem \ref{Th1} to obtain
\begin{equation*}
		\lim_{r \to n+} \mathcal{G}_r(f)^{-r}= \frac{1}{n} \int_{\S^{n-1}} \lim_{r \to n+} \bigg(1-\frac{n}{r} \bigg)^{-\frac{r}{n}}\|\xi\|^{-n}_{\Pi^{*, \frac{n}{r}}_{r, n} f} \, d \xi  = \int_{\S^{n-1}} \|\xi\|^{-n}_{\Pi^*_n f} \, d \xi = n |\Pi^*_n f|.
\end{equation*}
\end{proof}



\begin{proof}[Proof of Theorem \ref{ThmOptim}]

The assertion (i) is a simple consequence of \eqref{D2}.

(ii): Let $x \in \R^n$. Then
\begin{equation}
	\sup_{r \in (n, 2n)} \left[  \exp \bigg(\frac{|f(x)|}{\beta \omega_n^{1/r} \mathcal{G}_r(f)} \bigg)^{n'} -1 \right] = \exp \bigg[\frac{|f(x)|}{\beta} \sup_{r \in (n, 2 n)} \omega_n^{-1/r} \mathcal{G}_r(f)^{-1}  \bigg]^{n'}-1. \label{LimP8}
\end{equation}
In view of \eqref{Volume} and \eqref{LimP2}, for $r \in (n, 2 n)$,
\begin{align*}
	 \mathcal{G}_r(f)^{-r}  & \leq \frac{1}{n} \int_{\S^{n-1}}  \sup_{r \in (n, 2 n)}  \bigg(1-\frac{n}{r} \bigg)^{-\frac{r}{n}}\|\xi\|^{-n}_{\Pi^{*, \frac{n}{r}}_{r, n} f} \, d \xi \\
	& \lesssim \sup_{\xi \in \S^{n-1}}\sup_{r \in (n, 2 n)}   \bigg(1-\frac{n}{r} \bigg)^{-\frac{r}{n}}\|\xi\|^{-n}_{\Pi^{*, \frac{n}{r}}_{r, n} f}  \lesssim  \frac{1}{|\Omega|} \bigg(\frac{\text{diam } \Omega}{\min\{1, A\} A} \bigg)^n.
\end{align*}
Putting this into \eqref{LimP8}, there exists a positive constant $\lambda = \lambda(\|f\|_{L^n}, n, \Omega)$, which depends on $\|f\|_{L^n}, n$ and $\Omega$, such that
$$
	\sup_{r \in  (n, 2 n)} \left[ \exp \bigg(\frac{|f(x)|}{\beta \omega_n^{1/r} \mathcal{G}_r(f)} \bigg)^{n'} - 1 \right]  \lesssim \exp (\lambda |f(x)|)^{n'} - 1,
$$
from which we derive that
$
	\sup_{r \in ( n, 2n )} \Big[  \exp \bigg(\frac{|f(x)|}{\beta \omega_n^{1/r} \mathcal{G}_r(f)} \bigg)^{n'} - 1 \Big] \in L^1
$ (recall that $f \in C(\overline{\Omega})$).
Therefore an application of  the dominated convergence theorem together with the limit formula \eqref{LimP1} yield
\begin{equation*}
	\lim_{r \to n+}		\int_{\R^n} \left[\exp \bigg(\frac{|f(x)|}{\beta \omega_n^{1/r} \mathcal{G}_r(f)} \bigg)^{n'} - 1 \right] \, dx   = \int_{\R^n}  	\left[ \exp \bigg(\frac{|f(x)|}{\beta \omega_n^{1/n} n^{-1/n}  |\Pi^{*}_{n} f|^{-1/n}} \bigg)^{n'} -1 \right] \, dx.
\end{equation*}
\end{proof}

\section{Proof of Theorem \ref{ThmIntro1.12}}\label{Section6}

	The proof of \eqref{SharpFMT} is a combination of Theorem \ref{ThmFractCLYYIntro} with the following inequality (cf. Proposition \ref{PropHolder})
	$$
		n \omega_n^{1+ \frac{n}{r}} \big|\Pi^{*, \frac{n}{r}}_{r, n} f \big|^{-\frac{n}{r}} \leq c_n \|f\|_{B^{\frac{n}{r}}_{r, n}}^n.
	$$
	
	To show \eqref{SharpFMT2}, it is enough to restrict our attention to $r \in (n, 2 n)$. Then the desired assertion immediately follows from Theorem \ref{ThmSharpIlin}.
	
	 Let $e \in \S^{n-1}$ and (cf. \eqref{DefAlpha})
	\begin{equation}\label{CpDef}
		\gamma_{n, r} = \left(\frac{\alpha_{n, r}}{(n+r) \omega_n} \right)^{1/r}.
	\end{equation}
	The proof of \eqref{SharpFMT1} relies on the following  limit formula:
	\begin{equation}\label{ProofSFMT6}
	\lim_{r \to n+} \bigg(1-\frac{n}{r} \bigg)^{\frac{1}{n}} \|f\|_{B^{\frac{n}{r}}_{r, n}} = \frac{\gamma_n}{n^{\frac{1}{n}}} \, \|\nabla f\|_{L^n},
	\end{equation}
	with $\gamma_n = \gamma_{n, n}$.
	The latter follows as an application of Lemma \ref{Th2} with the special choice of $g$ given by
	\begin{equation*}
	g(t, \varepsilon) =
		      \omega(f, t)_{r(\varepsilon)}  = \bigg(\frac{1}{t^n \omega_n} \int_{|h| < t} \|\Delta_h f\|^{r(\varepsilon)}_{L^{r(\varepsilon)}} \, d h \bigg)^{\frac{1}{r(\varepsilon)}},
		      \end{equation*}
	where $\lim_{\varepsilon \to 0+} r(\varepsilon) = n$. More precisely, since $\|\Delta_h f\|_{L^{r(\varepsilon)}} \leq |h| \, \|\nabla f\|_{L^{r(\varepsilon)}}$ (apply the Cauchy--Schwarz inequality to \eqref{314b2}), we have
	\begin{equation*}
		 \frac{\omega(f, t)_{r(\varepsilon)}}{t} \leq \bigg(\frac{c_n}{r(\varepsilon) + n} \bigg)^{\frac{1}{r(\varepsilon)}} \|\nabla f\|_{L^{r(\varepsilon)}},
	\end{equation*}
	which yields
	\begin{equation}\label{ProofSFMT2}
		 \limsup_{\varepsilon \to 0+} \sup_{t \in (0, \infty)} \frac{\omega(f, t)_{r(\varepsilon)}}{t} \leq \bigg(\frac{c_n}{2 n} \bigg)^{\frac{1}{n}}  \|\nabla f\|_{L^n} < \infty.
	\end{equation}

	Furthermore, we claim that
	\begin{equation}\label{6.10}
		\lim_{t \to 0+}  \bigg(\frac{1}{t^{n+r} \omega_n} \int_{|h| < t} \|\Delta_h f\|^{r}_{L^{r}} \, d h \bigg)^{\frac{1}{r}} = \gamma_{n, r} \,  \|\nabla f\|_{L^r}
	\end{equation}
	\emph{uniformly} with respect to $r \in [1, \infty)$. Such a claim is somehow implicit in the computations carried out in \cite[(6.9)]{DLTYY},  however, to make the presentation self-contained, we prefer to give below a direct proof of \eqref{6.10}: Without loss of generality, we may assume that $\text{supp } f \subset B(0, R)$ for some $R > 0$ and $t \in (0, R)$. Let $L$ be the Lipschitz constant of $\nabla f$. Then, for every $x \in \R^n$ and $|h| < t$,
	\begin{equation}\label{6.12}
		|\Delta_h f(x)-\nabla f(x) \cdot h| \leq |h |  \int_0^1 |\nabla f (x + s h) -\nabla f(x)| \, d s \leq \frac{L}{2} |h|^2  \mathbf{1}_{B(0, 2 R)}(x).
	\end{equation}
	In addition, using the definition of $\gamma_{n, r}$ (cf. \eqref{CpDef}), we find that, for any $t > 0$,
	$$
		\gamma_{n, r}  \|\nabla f\|_{L^r} = \bigg(\frac{1}{t^{n + r} \omega_n} \int_{|h| < t} \|\nabla f \cdot h\|^p_{L^r} \, dh \bigg)^{1/r}.
	$$
	From this and \eqref{6.12}, we derive
	\begin{align*}
		\left| \bigg(\frac{1}{t^{n+r} \omega_n} \int_{|h| < t} \|\Delta_h f\|^{r}_{L^{r}} \, d h \bigg)^{1/r} - \gamma_{n, r} \|\nabla f\|_{L^r} \right| & \leq \left( \frac{1}{t^{n+r} \omega_n} \int_{|h| < t} \|\Delta_h f - \nabla f \cdot h\|_{L^r}^r \, dh \right)^{1/r} \\
		&\hspace{-6cm} \leq \frac{L |B(0, 2R)|^{1/r} }{2} \left( \frac{1}{t^{n+r} \omega_n} \int_{|h| < t} |h|^{2 r} \, dh \right)^{1/r} =  \frac{L}{2} \bigg( \frac{n |B(0, 2R)|}{2 r + n} \bigg)^{1/r} t.
	\end{align*}
	Then, taking into account that $( \frac{n |B(0, 2R)|}{2 r + n})^{1/r} \approx 1$ uniformly with respect to $r \in [1, \infty)$, we arrive at the desired assertion \eqref{6.10}.
	
	It follows from \eqref{6.10} (applied to $r = r(\varepsilon)$) that
	\begin{equation*}
		\Big|\frac{g(t, \varepsilon)}{t} - \gamma_n \|\nabla f\|_{L^n} \Big| \leq \Big|\frac{g(t, \varepsilon)}{t} - \gamma_{n, r(\varepsilon)} \|\nabla f\|_{L^{r(\varepsilon)}} \Big| + |\gamma_{n, r(\varepsilon)} \|\nabla f\|_{L^{r(\varepsilon)}}  - \gamma_n \|\nabla f\|_{L^n}| \to 0
	\end{equation*}
	as $t, \varepsilon \to 0+$, i.e.,
	\begin{equation}\label{6.13}
		\lim_{t, \varepsilon \to 0+} \frac{g(t, \varepsilon)}{t} = \gamma_n \|\nabla f\|_{L^n}.
	\end{equation}

	The validity of \eqref{ProofSFMT2} and \eqref{6.13} enables us to apply Lemma \ref{Th2}, namely,
	$$
		\lim_{\varepsilon \to 0} \int_0^\infty \bigg(\frac{g(t, \varepsilon)}{t} \bigg)^n \rho_\varepsilon(t) \, dt = \gamma_n^n \|\nabla f\|_{L^n}^n,
	$$
	where $\rho_\varepsilon (t) = n (1-\frac{n}{r(\varepsilon)}) t^{n(1-\frac{n}{r(\varepsilon)})-1} \mathbf{1}_{(0, 1)}(t)$. This can be expressed as
	$$
		\lim_{\varepsilon \to 0} \bigg(1-\frac{n}{r(\varepsilon)} \bigg) \int_0^1 [t^{-\frac{n}{r(\varepsilon)}} \omega(f, t)_{r(\varepsilon)}]^n \, \frac{dt}{t}= \frac{\gamma_n^n}{n} \,  \|\nabla f\|_{L^n}^n.
	$$
	In particular, since $\omega(f, t)_{r(\varepsilon)} \lesssim \|f\|_{L^{r(\varepsilon)}}$,
	$$
		\lim_{\varepsilon \to 0} \bigg(1-\frac{n}{r(\varepsilon)} \bigg) \int_0^\infty [t^{-\frac{n}{r(\varepsilon)}} \omega(f, t)_{r(\varepsilon)}]^n \, \frac{dt}{t}= \frac{\gamma_n^n}{n} \,  \|\nabla f\|_{L^n}^n,
	$$
	i.e., \eqref{ProofSFMT6} holds.

	According to \eqref{MonMod1} (where the corresponding equivalence constant can be explicitly estimated using \eqref{EquivMod}), we have $$\omega(f, \lambda t)_r \leq 4 (2^n + 1)^{1/r} (1+\lambda) \omega(f, t)_r \qquad \text{for} \qquad \lambda, t > 0.$$ In particular, taking $\lambda = t^{-1}$ in the previous estimate, we have, for every $r > n$,
	\begin{align*}
		 \|f\|_{B^{\frac{n}{r}}_{r, n}} & \geq \left( \int_0^1 \bigg[t^{1-\frac{n}{r}} \,  \frac{\omega(f, t)_r}{t} \bigg]^n \, \frac{dt}{t} \right)^{1/n}  \geq \frac{\omega(f, 1)_p}{8 (2^n + 1)^{1/n}} \, \bigg(\int_0^1 t^{(1-\frac{n}{r}) n} \, \frac{dt}{t} \bigg)^{1/n} \\
		 &  = \Big(1-\frac{n}{r} \Big)^{-1/n} \frac{\omega(f, 1)_r}{8 [n(2^n + 1)]^{1/n}} \geq \Big(1-\frac{n}{r} \Big)^{-1/n} |B(0, R+1)|^{-\frac{1}{n}} \frac{\omega(f, 1)_n}{8 [n(2^n + 1)]^{1/n}},
	\end{align*}
	where we have used H\"older's inequality in the last step.
	As a byproduct, for every $x \in \R^n$,
	\begin{align*}
		\sup_{r > n} \exp \bigg(c_n \,  \frac{|f(x)|}{(1-\frac{n}{r})^{1/n} \|f\|_{B^{n/r}_{r, n}}} \bigg)^{n'} & = \exp \bigg( c_n \,  \frac{|f(x)|}{ \inf_{r > n} (1-\frac{n}{r})^{1/n} \|f\|_{B^{n/r}_{r, n}}} \bigg)^{n'} \\
		& \leq \exp \bigg(\tilde{c}_n \, |B(0, R+1)|^{1/n}  \frac{|f(x)|}{\omega(f, 1)_n} \bigg)^{n'},
	\end{align*}
	with
	$$
		\int_{\R^n} \bigg[ \exp \bigg(\tilde{c}_n \, |B(0, R+1)|^{1/n}   \frac{|f(x)|}{\omega(f, 1)_n} \bigg)^{n'} - 1 \bigg] \, dx < \infty.
	$$
	Therefore, the dominated convergence theorem can be applied, together with \eqref{ProofSFMT6}, in order to get
	\begin{align*}
		\lim_{r \to n+} \int_{\R^n} \bigg[ \exp \bigg(c_n \,  \frac{|f(x)|}{(1-\frac{n}{r})^{1/n} \|f\|_{B^{n/r}_{r, n}}} \bigg)^{n'} - 1 \bigg] \, dx	  = \int_{\R^n} \bigg[\exp \bigg(\frac{n^{1/n} c_n}{\gamma_n} \,  \frac{|f(x)|}{\|\nabla f\|_{L^n}} \bigg)^{n'} - 1 \bigg] \, d x.
	\end{align*}
	The proof of \eqref{SharpFMT1} is finished. \qed

\section{Proof of Theorem \ref{ThmIntro116} and \eqref{Intro121}}\label{Section7}

\subsection{Proof of \eqref{Intro121}} We rely on the following sharp estimate involving classical Besov spaces that has been recently obtained in  \cite[Theorem 6.13, (6.52)]{DominguezTikhonov}. Assume that $\frac{n}{p} < s_0 < s < 1$, then
	\begin{equation}\label{PAM3}
		\|f\|_{B^{s-\frac{n}{p}}_{\infty, p}} \leq C (1-s)^{1/p} \|f\|_{W^{s, p}},
	\end{equation}
	where $C$ is independent of $f$ and $s$.

	Let $u > 0$. We have
	\begin{align*}
		\|f\|_{B^{s-\frac{n}{p}}_{\infty, p}} &\geq  \bigg(\int_u^\infty [t^{-s + \frac{n}{p}} \omega(f, t)_\infty]^p \, \frac{dt}{t} \bigg)^{1/p}  \geq \omega(f, u)_\infty \bigg(\int_u^\infty t^{-s p + n} \frac{dt}{t} \bigg)^{1/p}  = (s p -n)^{-\frac{1}{p}}  \omega(f, u)_\infty  u^{-s + \frac{n}{p}}.
	\end{align*}
	In particular, taking $u = |\text{supp } f|^{1/n}$, we get
	\begin{equation}\label{PAM5}
		 \omega(f,  |\text{supp } f|^{1/n})_\infty \leq  (s p -n)^{\frac{1}{p}} |\text{supp } f|^{\frac{s}{n}-\frac{1}{p}} \|f\|_{B^{s-\frac{n}{p}}_{\infty, p}}.
	\end{equation}
	
	Next we check that
	\begin{equation}\label{new90}
		\|f\|_{L^\infty} \leq c_n  \omega(f,  |\text{supp } f|^{1/n})_\infty.
	\end{equation}
	Given any $x \in \text{supp }f$, we choose $r \in (0, \infty)$ such that $|B(x, r)| = |\text{supp } f|$ (i.e., $\omega_n r^n = |\text{supp }f|$). Then either  $B(x, r) \backslash \text{supp } f \neq \emptyset$ or $\text{supp }f = B(x, r) \cup N$ with $|N| =0$. Assume first that there exists $y \in B(x, r) \backslash \text{supp }f$, then
	$$
		|f(x)| = |f(x)-f(y)| \leq \omega(f, r)_\infty \leq \Big(1+ \frac{1}{\omega_n^{1/n}} \Big) \, \omega(f, |\text{supp }f|^{1/n})_\infty.
	$$
	On the other hand, if $\text{supp }f = B(x, r) \cup N$ with $|N| =0$ then we can take $y \in B(x, 2 r) \backslash \text{supp }f$ and a similar argument as above applies. This proves \eqref{new90}.

	Hence, by \eqref{PAM5} and \eqref{new90},
	\begin{equation*}
		\|f\|_{L^\infty} \leq c_n \, (p -n)^{\frac{1}{p}}  |\text{supp } f|^{\frac{s}{n}-\frac{1}{p}} \|f\|_{B^{s-\frac{n}{p}}_{\infty, p}},
	\end{equation*}
	and combining this with \eqref{PAM3}, we achieve
		\begin{equation*}
			\|f\|_{L^\infty} \lesssim (1-s)^{\frac{1}{p}}  |\text{supp } f|^{\frac{s}{n}-\frac{1}{p}}  \|f\|_{W^{s, p}}.
	\end{equation*}
	This completes the proof of \eqref{Intro121}.
	
	\subsection{Proof of Theorem \ref{ThmIntro116}} It follows from \eqref{Intro121} applied to $f^\star$ (recall that $|\text{supp } f^\star| = |\text{supp } f|$) together with Proposition \ref{Prop348} and Theorem \ref{ThmPSBod} (for the special choice $p=q$) that
	$$
		\|f\|_{L^\infty} \lesssim  (1-s)^{\frac{1}{p}}  |\text{supp } f|^{\frac{s}{n}-\frac{1}{p}} |\Pi^{*, s}_p f^\star|^{-\frac{s}{n}} \leq (1-s)^{\frac{1}{p}}  |\text{supp } f|^{\frac{s}{n}-\frac{1}{p}} |\Pi^{*, s}_p f|^{-\frac{s}{n}}.
	$$
	This proves the desired estimate \eqref{ThmPam1}.
	
	Let $\xi \in \S^{n-1}$. From \eqref{314b2}, we derive
	\begin{align*}
		\|\xi\|^{p s}_{\Pi^{*, s}_p f} &=  \int_0^\infty  t^{-p s  -1}  \|\Delta_{t \xi} f\|_{L^p}^p \, dt  \leq  \int_0^\infty t^{-p s-1}  \min\{2 \|f\|_{L^p}, t \|\nabla f \cdot \xi\|_{L^p}\}^p \, dt \\
		& = \bigg( \int_0^{\frac{2 \|f\|_{L^p}}{\|\nabla f \cdot \xi\|_{L^p}}} t^{p(1-s)-1} \, dt \bigg) \|\nabla f \cdot \xi\|^p_{L^p} + 2^p \bigg( \int_{\frac{2 \|f\|_{L^p}}{\|\nabla f \cdot \xi\|_{L^p}}} ^\infty t^{-p s-1} \, dt \bigg) \|f\|^p_{L^p} \\
		& = \frac{2^{p(1-s)}}{p s(1-s)}\,  \|f\|_{L^p}^{p(1-s)} \|\nabla f \cdot \xi\|_{L^p}^{ p s}  = \frac{2^{p(1-s)}}{p s(1-s)}\,  \|f\|_{L^p}^{p(1-s)}  \|\xi\|_{\Pi^*_p f}^{p s}.
	\end{align*}
	Therefore
	\begin{align*}
		|\Pi^{*, s}_p f| &= \frac{1}{n} \, \int_{\S^{n-1}} \|\xi\|_{\Pi^{*, s}_p f}^{-n} \, d \xi  \gtrsim  (s(1-s))^{\frac{n}{s p}} \, \|f\|_{L^p}^{-\frac{(1-s) n}{s}} \, \frac{1}{n}\int_{\S^{n-1}} \|\xi\|_{\Pi^*_p f}^{-n} \, d \xi  = (s(1-s))^{\frac{n}{s p}} \, \|f\|_{L^p}^{-\frac{(1-s) n}{s}} |\Pi^*_p f|.
	\end{align*}
	This yields (recall that $f \in W^{s, p} \subset L^\infty$)
	\begin{align*}
		|\Pi^{*, s}_p f|^{-\frac{s}{n}}   &\lesssim (s (1-s))^{-\frac{1}{p}} \|f\|_{L^p}^{1-s}  |\Pi^*_p f|^{-\frac{s}{n}} \leq ( s (1-s))^{-\frac{1}{p}} |\text{supp } f|^{\frac{1-s}{p}} \|f\|_{L^\infty}^{1-s}  |\Pi^*_p f|^{-\frac{s}{n}}.
	\end{align*}
	Inserting \eqref{Intro9} in the last estimate, we find
	$$
		|\Pi^{*, s}_p f|^{-\frac{s}{n}}  \lesssim (s (1-s))^{-\frac{1}{p}} |\text{supp } f|^{\frac{1-s}{n}}    |\Pi^*_p f|^{-\frac{1}{n}},
	$$
	i.e., \eqref{ThmPam2} holds.
	
	The relation  \eqref{ThmPam3} is an immediate application of \eqref{HLLimit}. \qed


\begin{thebibliography}{00}

\bibitem{Aubin} Aubin, T.: \emph{Probl\`emes isop\'erim\'etriques et espaces de Sobolev.} J. Differ. Geom. \textbf{11},  573--598 (1976).

\bibitem{BerghLofstrom} Berg, J., L\"ofstr\"om, J.: \emph{Interpolation Spaces. An Introduction}. Springer, Berlin, 1976.

\bibitem{BBM} Bourgain, J., Brezis, H.,  Mironescu, P.: \emph{Another look at Sobolev spaces.} In: Optimal Control and Partial Differential Equations,  pp. 439--455. IOS, Amsterdam, 2001.

\bibitem{BBM02} Bourgain, J., Brezis, H., Mironescu, P.: \emph{Limiting embedding theorems for $W^{s, p}$ when $s \uparrow 1$ and applications}. J. Anal. Math. \textbf{87}, 77--101 (2002).

\bibitem{B02} Brezis, H.: \emph{How to recognize constant functions. Connections with Sobolev spaces}.  Volume in honor of M. Vishik, Uspekhi Mat. Nauk \textbf{57}, 59--74 (2002) (in Russian). English translation in Russian Math. Surveys \textbf{57}, 693--708 (2002).

\bibitem{BN16} Brezis, H., Nguyen, H.-M.: \emph{Two subtle convex nonlocal approximations of the BV-norm}. Nonlinear Anal. \textbf{137}, 222--245 (2016).

\bibitem{CLYZ} Cianchi, A., Lutwak, E., Yang, D., Zhang, G.: \emph{Affine Moser-Trudinger and Morrey-Sobolev inequalities}. Calc. Var. Partial Differential Equations \textbf{36}, 419--436 (2009).


\bibitem{Cordero} Cordero-Erausquin, D., Nazaret, B., Villani, C.: \emph{A mass-transportation approach to Sobolev and Gagliardo--Nirenberg inequalities}. Adv. Math. \textbf{182}, 307--332 (2004).

\bibitem{DeVoreLorentz} DeVore, R.A., Lorentz, G.G.: \emph{Constructive Approximation.} Springer, Berlin, 1993.

\bibitem{DeVoreScherer} DeVore, R.A., Scherer, K.: \emph{Interpolation of linear operators on Sobolev spaces}. Annals of Math. (2) \textbf{109}, 583--599 (1979).

\bibitem{DominguezTikhonov} Dom\'inguez, O., Tikhonov, S.: \emph{Sobolev embeddings, extrapolations, and related inequalities}. \url{https://arxiv.org/pdf/1909.12818.pdf}

\bibitem{DLTYY} Dom\'inguez, O., Li, Y., Tikhonov, S., Yang, D., Yuan, W.: \emph{A unified approach to self-improving property via $K$-functionals}. \url{https://arxiv.org/pdf/2309.02597.pdf}



\bibitem{Figalli} Figalli, A., Maggi, F., Pratelli, A.: \emph{Sharp stability theorems for the anisotropic Sobolev and log-Sobolev inequalities of functions of bounded variation}. Adv. Math. \textbf{242}, 80--101 (2013).

\bibitem{FrankSeiringer} Frank, R., Seiringer, R.: \emph{Non-linear ground state representations and sharp Hardy inequalities}. J. Funct. Anal. \textbf{255}, 3407--3430 (2008).


\bibitem{HaddadJimenezMontenegro} Haddad, J., Jim\'enez, C.H., Montenegro, M.: \emph{From affine Poincar\'e inequalities to affine spectral inequalities}. Adv.  Math. \textbf{386}, paper No. 107808, 35 pp. (2021).


\bibitem{HaddadLudwig} Haddad, J., Ludwig, M.: \emph{Affine fractional $L^p$ Sobolev inequalities.} Math. Ann. \textbf{388}, 1091--1115 (2024).

\bibitem{HaddadLudwig2} Haddad, J., Ludwig, M.: \emph{Affine fractional Sobolev and isoperimetric inequalities}. J. Differ. Geom. (to appear).

\bibitem{HuangLi} Huang, Q.,  Li, A.-J.: \emph{Optimal Sobolev norms in the affine class.} J. Math. Anal. Appl. \textbf{436}, 568--585 (2016).


\bibitem{JohnenScherer} Johnen, H., Scherer, K.: \emph{On the equivalence of the $K$-functional and moduli of continuity and some applications.} In: Constructive Theory of Functions of Several Variables, Proc. Conf., Math. Res. Inst., Oberwolfach, 1976, pp. 119--140. Springer, Berlin, 1977.

\bibitem{KMX} Karadzhov, G.E., Milman, M., Xiao, J.: \emph{Limits of higher-order Besov spaces and sharp reiteration theorems}. J. Funct. Anal. \textbf{221}, 323--339 (2005).


\bibitem{Kolyada} Kolyada, V.I.: \emph{On embedding theorems}. In: Nonlinear Analysis, Function Spaces and Applications, vol. 8 (Proceedings of the Spring School held in Prague, 2006), pp. 35--94, Prague, 2007.

\bibitem{KL} Kolyada, V.I., Lerner, A.K.: \emph{On limiting embeddings of Besov spaces}. Studia Math. \textbf{171}, 1--13 (2005).

\bibitem{Ludwigb} Ludwig, M.: \emph{Anisotropic fractional perimeters}. J. Differ. Geom. \textbf{96}, 77--93 (2014).


\bibitem{Ludwig} Ludwig, M.: \emph{Anisotropic fractional Sobolev norms}. Adv. Math. \textbf{252}, 150--157 (2014).

\bibitem{LutwakYangZhang} Lutwak, E., Yang, D., Zhang, G.: \emph{Sharp affine $L^p$ Sobolev inequalities}. J. Differ. Geom. \textbf{62}, 17--38 (2002).

\bibitem{Nguyen} Mallick, A., Nguyen, H.-M.: \emph{Exponential integrability in the spirit of Moser-Trudinger's inequalities of functions with finite non-local, non-convex energy.} Pure Appl. Funct. Anal. \textbf{5}, 1153--1165 (2020).

\bibitem{Martinazzi} Martinazzi, L.: \emph{Fractional Adams-Moser-Trudinger type inequalities}. Nonlinear Anal. \textbf{127}, 263--278 (2015).

\bibitem{Mazya} Maz'ya, V., Shaposhnikova, T.: \emph{On the Bourgain, Brezis, and Mironescu theorem concerning limiting embeddings of fractional Sobolev spaces}. J. Funct. Anal. \textbf{195}, 230--238 (2002).

\bibitem{Moser} Moser, J.: \emph{A sharp form of an inequality by N. Trudinger.} Indiana Univ. Math. J.  \textbf{20},  1077--1092 (1970/1971).

\bibitem{PariniRuf} Parini, E., Ruf, B.: \emph{On the Moser-Trudinger inequality in fractional Sobolev-Slobodeckij spaces.} J. Anal. Math. \textbf{138}, 281--300 (2019).


 \bibitem{Peetre66} Peetre, J.: \emph{Espaces d'interpolation et théorème de Soboleff.} Ann. Inst. Fourier (Grenoble) \textbf{16},  279--317 (1966).

\bibitem{Rodemich} Rodemich, E.: \emph{The Sobolev inequalities with best possible constants.} Analysis Seminar
at California Institute of Technology, 1--25 (1966).

\bibitem{Stein} Stein, E.M.: \emph{Singular Integrals and Differentiability Properties of Functions}. Princeton University Press, Princeton, 1970.

\bibitem{Talenti} Talenti, G.: \emph{Best constant in Sobolev inequality}. Ann. Mat. Pura Appl. (4) \textbf{110}, 353--372 (1976).

\bibitem{Triebel} Triebel, H.: \emph{Theory of Function Spaces}. Birkh\"auser, Basel, 1983.

\bibitem{Triebel01} Triebel, H.: \emph{The Structure of Functions}. Birkh\"auser, Basel, 2001.

\bibitem{Triebel05} Triebel, H.: \emph{Sampling numbers and embedding constants}. Proc. Steklov Inst. Math. \textbf{248}, 268--277 (2005).

\bibitem{Triebel08} Triebel, H.: \emph{Function Spaces and Wavelets on Domains}. European Math. Soc. Publishing House, Z\"urich, 2008.

\bibitem{Trudinger} Trudinger, N.S.: \emph{On imbeddings into Orlicz spaces and some applications.} J. Math. Mech. \textbf{17}, 473--483 (1967).

\bibitem{Wang} Wang, T.: \emph{The affine Sobolev-Zhang inequality on $\emph{BV}(\R^n).$} Adv. Math. \textbf{230}, 2457--2473 (2012).

\bibitem{Zhang} Zhang, G.: \emph{The affine Sobolev inequality}. J. Differ. Geom. \textbf{53}, 183--202 (1999).






\end{thebibliography}
\end{document}